\newtheorem{thm}{Theorem}[section]
\newtheorem{prop}[thm]{Proposition}
\newtheorem{cor}[thm]{Corollary}
\newtheorem{lem}[thm]{Lemma}
\newtheorem*{thmnn}{Theorem}
\theoremstyle{remark}
\theoremstyle{definition}
\newtheorem{defn}[thm]{Definition}
\begin{document}

\title{The Structure of Integral Parabolic Subgroups of Orthogonal Groups}

\author{Shaul Zemel}

\maketitle

\section*{Introduction}

The structure of semi-simple linear algebraic groups over fields of characteristic 0, as well as of their parabolic subgroups, is well-known for quite some time. The latter admits a \emph{Levi decomposition}: It contains a normal unipotent radical, the quotient is semi-simple, and the resulting short exact sequence splits. Therefore the parabolic subgroup itself is a semi-direct product, and the image of any splitting of this sequence is a \emph{Levi subgroup}. The considered in this paper is where the group is the orthogonal group of a non-degenerate quadratic space $V$ and the parabolic group is maximal, which is therefore associated with a non-trivial isotropic subspace $U$ of $V$. All the parabolic subgroups associated with isotropic subspaces of the same dimension are conjugate, but those associated with subspaces of different dimensions are not isomorphic.

While this description is easy and simple over any field of characteristic 0, the structure becomes more complicated when one considers subgroups that are defined over rings that are not fields. In particular, arithmetic subgroups can be viewed as analogues of these groups over $\mathbb{Z}$, and the theory of the parabolic subgroups there, in particular explicit splitting of the short exact sequence of the Levi decomposition, can become much more involved. The goal of this paper is to investigate this question in the setting where the group is the orthogonal group of $V$ as above, where the underlying field is $\mathbb{Q}$, and the arithmetic group is the discriminant kernel of an even lattice $L$ inside $V$.

As an application, we recall that in some cases the symmetric spaces of real linear algebraic groups can be Hermitian, and that by \cite{[BB]}, quotients of the Hermitian symmetric spaces by arithmetic subgroups are complex algebraic varieties, which are defined over number fields, and are also known as \emph{Shimura varieties}. Knowledge about the maximal parabolic subgroups gives information about the canonical Baily--Borel compactifications (see also \cite{[S]}), and combining this with the theory of toric varieties can be used, as in \cite{[AMRT]} and \cite{[Nam]}, to the construction of toroidal compactifications. However, the precise form of the boundary components of these compactifications depends on the structure of the parabolic subgroups of the arithmetic group in question. In the orthogonal case, the symmetric spaces is Hermitian only if the signature of $V$ over $\mathbb{R}$ is $(n,2)$ (or $(n,2)$, which is dual to it), where there are at most two types of maximal parabolic subgroups, because the dimension of a rational isotropic subspace can be at most 2 in this case.

Now, the structure of both the Baily--Borel and the toroidal compactifications is described in \cite{[F]}, where the toroidal boundary components arising from 2-dimensional isotropic subspaces are canonical. We recall that the structure over $\mathbb{Z}$ stays simple in case the isotropic subspace has dimension 1 (this appears already implicitly in \cite{[Bo]}), but the delicacies appearing in the case of dimension 2 were overlooked in \cite{[F]}. The second goal of this paper is to use the precise structure of the corresponding parabolic subgroup proven here in order to give the exact form of the associated canonical toroidal boundary components.

\smallskip

We can now present the results of this paper in more detail. Let $\mathrm{O}(V)$ be the orthogonal group associated with the rational quadratic space $V$, and let $\mathcal{P}_{U}$ be the parabolic subgroup corresponding to the isotropic subspace $U$ of $V$, where we set $W=U^{\perp}/U$. Then the Levi quotient is the product of $\mathrm{O}(W)$ and $\mathrm{GL}(U)$, and the unipotent radical $\mathcal{W}_{U}$ is the Heisenberg group associated with the anti-symmetric bilinear map taking a pair of homomorphisms $\psi$ and $\varphi$ in $\mathrm{Hom}(W,U)$ to the homomorphism $\frac{\psi\varphi^{*}-\varphi\psi^{*}}{2}\in\mathrm{Hom}^{as}(U^{*},U)$ (where the superscript denotes anti-symmetry). It follows that if $\dim U=1$ then $\mathcal{W}_{U}$ is Abelian, and otherwise it is nilpotent of class 2 (in fact, it can be verified that the nilpotence class of the unipotent radical of every maximal parabolic subgroup of a classical group is either 1 or 2). The choice of a Levi subgroup can be done is several equivalent ways, and one of them is by intersecting with a complementary maximal parabolic subgroup, which is associated with an isotropic vector space of the same dimension as $U$. More explicitly, we take a complement $\tilde{U}$ for $U^{\perp}$ in $V$, and then the Levi subgroup is the intersection of the stabilizers of $U$ and of $U\oplus\tilde{U}$, and every element of $\mathcal{P}_{U}$ can be written uniquely as a product of an element of $\mathcal{W}_{U}$ and a Levi element. A crucial observation here is that $\tilde{U}$ need not be isotropic, and this is important since good complements over $\mathbb{Z}$ do not always exist. We also remark that the factor $\frac{1}{2}$ appearing in the definition of the map used for the Heisenberg group indicates that care has to be taken when working over $\mathbb{Z}$. Two incarnations of this fact are the definition of semi-characters (rather than characters) in Section 2.2 of \cite{[BL]}, and the condition about vanishing of Fourier coefficients of generalized Jacobi forms following Definition 2 of \cite{[W]} (note that the published version of this paper contains a mistake, but it is corrected in the arXiv version).

As mentioned above, the arithmetic group arises from a lattice $L$ in $V$, and we take it to be the discriminant kernel, also called the stable orthogonal group, of $L$, which we denote by $\Gamma_{L}$. Moreover, in all the parabolic groups we consider only those elements which lie in the connected component of the corresponding real Lie group, which yields nicer results. We set $I=U \cap L$ and $\Lambda=(I^{\perp} \cap L)/I \subseteq W$, and we denote the group $\Gamma_{L}\cap\mathcal{P}_{U}$ in which we are interested by $\Gamma_{L,I}$. We observe that for having a good decomposition of $L$ and its dual lattice $L^{*}$ we must consider only complements $\tilde{U}$ whose intersection with $L^{*}$ yields a dual for $I$ over $\mathbb{Z}$ (this is the reason for allowing non-isotropic complements, as isotropic complements with this property do not always exist). Now, as already observed in \cite{[Nak]}, \cite{[Bo]}, \cite{[Br]}, \cite{[Z1]}, \cite{[Z2]}, and others, when $I$ has rank 1 the intersection of $\Gamma_{L}$ with the unipotent radical $\mathcal{W}_{U} \cong U$ is just $\Lambda$, the image in the Levi quotient is the discriminant kernel $\Gamma_{\Lambda}$, and $\Gamma_{L,I}$ is the resulting semi-direct structure also over $\mathbb{Z}$, regardless of the choice of the complement $\tilde{U}$.

On the other hand, when $\dim U\geq2$ the structure is more complicated, and to describe it we need some additional notions. First, the Heisenberg group $\mathcal{W}_{U}$ is now a non-trivial extension, and in defining its integral counterpart, with center $\mathrm{Hom}^{as}(I^{*},I)$ and quotient $\mathrm{Hom}(\Lambda^{*},I)$, the elements mapping to some particular homomorphism from $\Lambda^{*}$ to $I$ may have to lie in a non-trivial coset inside $\mathrm{Hom}^{as}\big(I^{*},\frac{1}{2}I\big)$ for the group law to work. In addition, there are two subgroups of $U$, both of which contain $I$ with finite index, that we denote by $I_{L^{*}}$ and $I_{\iota}$, and we denote by $\mathrm{SL}(I_{L^{*}},I)$ and by $\mathrm{SL}(I_{\iota},I)$ the congruence subgroups of $\mathrm{SL}(I)$ that acts trivially on $I_{L^{*}}/I$ or on $I_{\iota}/I$. Then the former group contains the latter, and we recall that the quotient $\overline{\mathcal{P}}_{U}$ obtained by dividing $\mathcal{P}_{U}$ by $\mathrm{Hom}^{as}(U^{*},U)$ is the semi-direct product of the Levi subgroup and $\mathrm{Hom}(W,U)$. We can now describe, in the choice of coordinates arising from $\tilde{U}$, the intersection of $\Gamma_{L}$ with $\mathcal{W}_{U}$ and the image $\overline{\Gamma}_{L,I}$ of $\Gamma_{L,I}$ in $\overline{\mathcal{P}}_{U}$ as follows.
\begin{thmnn}
The intersection $\Gamma_{L}\cap\mathcal{W}_{U}$ is precisely the integral Heisenberg group mentioned above. In addition, there is a cocycle of $\mathrm{SL}(I_{L^{*}},I)$ with values in $\mathrm{Hom}(\Lambda,I)/\mathrm{Hom}(\Lambda^{*},I)$ such that $\overline{\Gamma}_{L,I}$ consists of those triples of $M\in\mathrm{SL}(U)$, $\gamma\in\mathrm{O}(W)$, and $\psi\in\mathrm{Hom}(W,U)$ for which is in $\mathrm{SL}(I_{L^{*}},I)$, $\gamma$ is in $\Gamma_{\Lambda}$, and $\psi$ is in $\mathrm{Hom}(\Lambda,I)$ such that the image of $\psi$ modulo $\mathrm{Hom}(\Lambda^{*},I)$ is the image of $M$ under the cocycle map.
\end{thmnn}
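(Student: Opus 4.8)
The plan is to fix a $\mathbb{Q}$-basis adapted to the flag $U\subseteq U^{\perp}\subseteq V$ and to the chosen complement $\tilde{U}$, writing $V=U\oplus D\oplus\tilde{U}$ with $D$ a lift of $W$ carrying the induced form, and then to record explicitly the Gram matrix of $V$ in these coordinates together with the exact positions of $L$ and $L^{*}$ — bearing in mind that $L$ need not be the orthogonal direct sum of $I$, a lift of $\Lambda$, and $\tilde{U}\cap L$, but only contains it with finite index, the discrepancy being the ``glue''. In these coordinates every element of $\mathcal{P}_{U}$ is block upper triangular and is encoded by a quadruple $(M,\gamma,\psi,\mu)$ with $M\in\mathrm{GL}(U)$, $\gamma\in\mathrm{O}(W)$, $\psi\in\mathrm{Hom}(W,U)$ and $\mu\in\mathrm{Hom}^{as}(U^{*},U)$, where orthogonality forces the action on $\tilde{U}$ to be the appropriate adjoint-inverse of $M$, twisted by $\psi$, $\mu$ and the self-pairing $Q$ of $\tilde{U}$, and where the composition law in these coordinates is precisely the Heisenberg law recalled in the introduction. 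I would carry this out in full first, since both halves of the statement then reduce to reading off congruences.

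For the first assertion I set $M=\mathrm{id}_{U}$, $\gamma=\mathrm{id}_{W}$ and determine when the resulting unipotent element lies in $\Gamma_{L}$; since $L\subseteq L^{*}$ and $gL=L$ is equivalent to $gL^{*}=L^{*}$, this amounts to requiring that it preserve $L^{*}$ and induce the identity on $L^{*}/L$. Confronting the explicit unipotent action with the explicit positions of $L$ and $L^{*}$, the condition on the $D$-part forces $\psi$ to carry $\Lambda^{*}$ — and not merely $\Lambda$ — into $I$, while the condition on the $\tilde{U}$-part, into which the term $\tfrac{1}{2}(w,w)$ for $w\in\Lambda^{*}$ enters, forces, for each such $\psi$, the admissible $\mu$ to range over exactly one coset of $\mathrm{Hom}^{as}(I^{*},I)$ inside $\mathrm{Hom}^{as}(I^{*},\tfrac{1}{2}I)$, nontrivial precisely when $(\cdot,\cdot)$ fails to be even on $\Lambda^{*}$. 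One then checks that triviality on $L^{*}/L$ imposes nothing further and that the Heisenberg composition restricts to the integral law of the introduction, which is the first claim.

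For the second assertion I would project $\overline{\Gamma}_{L,I}\subseteq\overline{\mathcal{P}}_{U}=(\mathrm{GL}(U)\times\mathrm{O}(W))\ltimes\mathrm{Hom}(W,U)$ onto the factor $\mathrm{GL}(U)\times\mathrm{O}(W)$. An element of $\Gamma_{L,I}$ preserves $L$ and $U$, hence $I=U\cap L$, so its $U$-part $M$ maps $I$ onto $I$, whence $\det M=1$ on restriction to the connected component; acting trivially on $L^{*}/L$ it acts trivially on the subquotients of $L^{*}/L$ attached to $U$ and to $W$, which I would identify with $I_{L^{*}}/I$ and with $\Lambda^{*}/\Lambda$, so that $M\in\mathrm{SL}(I_{L^{*}},I)$ and the $W$-part lies in $\Gamma_{\Lambda}$. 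A fibre of the projection over $(M,\gamma)$, if nonempty, is a coset of $\overline{\Gamma}_{L,I}\cap\mathrm{Hom}(W,U)$, which by the first part equals $\mathrm{Hom}(\Lambda^{*},I)$, and it sits inside $\mathrm{Hom}(\Lambda,I)$. Using that the $\Gamma_{\Lambda}$-component is realized by lifts with trivial $U$- and $\mathrm{Hom}(W,U)$-parts (an argument parallel to the rank-one case, possibly after a central correction), one reduces to $\gamma=\mathrm{id}$, so that a nonempty fibre over $M$ determines a well-defined class $c(M)\in\mathrm{Hom}(\Lambda,I)/\mathrm{Hom}(\Lambda^{*},I)$; the semidirect (Heisenberg) composition law together with the action of $\mathrm{SL}(I_{L^{*}},I)$ on this quotient then forces $M\mapsto c(M)$ to be a $1$-cocycle, and conversely the cocycle identity makes the displayed set of triples a subgroup, so it only remains to match the two.

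The crux, and the step I expect to be the main obstacle, is to show that the fibre over every $M\in\mathrm{SL}(I_{L^{*}},I)$ is nonempty — equivalently that each such $M$ lifts to $\Gamma_{L}$ after a suitable unipotent correction, so that $c$ is defined on all of $\mathrm{SL}(I_{L^{*}},I)$ and measures exactly the obstruction to lifting with $\psi=0$ — and, dually, that nothing strictly larger than $\mathrm{SL}(I_{L^{*}},I)$ can occur. I would do this by reducing $\mathrm{SL}(I_{L^{*}},I)$ to a convenient generating set of elementary or transvection-type matrices and, for each generator, exhibiting explicitly the $\psi\in\mathrm{Hom}(\Lambda,I)$ and $\mu\in\mathrm{Hom}^{as}(I^{*},\tfrac{1}{2}I)$ that repair the glue of $L$ inside $L^{*}$; here the compatibility of $\tilde{U}$ with $L^{*}$ and the precise roles of $I_{L^{*}}$ and $I_{\iota}$ — the former cutting out exactly the liftable $M$'s, the latter (as I expect) the sub-locus on which the correction can be avoided, i.e.\ on which $c$ restricts to zero — are what make the bookkeeping close. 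Assembling a lift of a general triple $(M,\gamma,\psi)$ as the product of a lift of $M$, a lift of $\gamma$, and a central or unipotent element supplied by the first part, and tracking its image in $\overline{\mathcal{P}}_{U}$, then yields the stated description.
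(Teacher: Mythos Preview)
Your overall architecture matches the paper's: set up block coordinates on $V=U\oplus\tilde{W}\oplus\tilde{U}$, determine the Heisenberg intersection by specializing to $M=\mathrm{Id}$ and $\gamma=\mathrm{Id}$, then read off the image in the Levi and the fibres. The paper does exactly this, with the lattice bookkeeping packaged into a preliminary result (Theorem~\ref{latdecom}) that pins down the ``glue'' as a single homomorphism $\iota:\tilde{I}_{L}\to\Delta_{\Lambda}$, and the main technical step (Proposition~\ref{paragrpZ}) then reads off the four conditions on $(M,\gamma,\psi,\eta)$.

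Where you diverge is in the surjectivity step, which you correctly flag as the crux. You propose to lift generators of $\mathrm{SL}(I_{L^{*}},I)$ one by one and then propagate via the cocycle identity. The paper instead never touches generators: once the glue is identified with $\iota$, condition~$(iii)$ on $\psi$ becomes the \emph{explicit} requirement that the induced map $\Delta_{\Lambda}\to U/I$ equal $(\mathrm{Id}_{U}-M)\circ\iota^{*}$, and this is the cocycle $b_{M}$ itself. Surjectivity is then a one-liner: the target $U/I$ factors through a lattice $J$ with $I\subseteq J$, and since $J$ is torsion-free the composite $\Lambda^{*}\twoheadrightarrow\Delta_{\Lambda}\to J/I$ lifts to $\Lambda^{*}\to J$. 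Your generator approach would work, but it obscures the closed formula $b_{M}=(\mathrm{Id}_{U}-M)\circ\iota^{*}$, which is what makes the identification of $\mathrm{SL}(I_{\iota},I)$ as the vanishing locus of $b$ immediate and what drives the application to boundary components later.

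One small slip: in your Heisenberg paragraph you attribute the non-triviality of the $\mu$-coset to failure of evenness on $\Lambda^{*}$. The relevant pairing is $\tfrac{1}{2}(\psi^{*}u^{*},\psi^{*}v^{*})$ with $\psi^{*}u^{*},\psi^{*}v^{*}\in\Lambda$, so it is governed by off-diagonal pairings in $\Lambda$ (always half-integral since $\Lambda$ is even), not by $\Lambda^{*}$.
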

This theorem is the combination of Theorem \ref{sdprodZ}, Proposition \ref{actcoset}, and some results of Proposition \ref{paragrpZ}. We note that the latter theorem gives the full description of $\Gamma_{L,I}$ in these coordinates, but the formula for the remaining coordinate, from $\mathrm{Hom}^{as}(U^{*},U)$, is more complicated, and also involves information from the possible lack of isotropy of $\tilde{U}$. We do mention that this coordinate does not depend on the coordinate $\gamma\in\Gamma_{\Lambda}$.

\smallskip

For the application to toroidal boundary components, we first observe that when $\dim U=2$ the space $\mathrm{Hom}^{as}(U^{*},U)$ is 1-dimensional, so that having the lattice $I$ in $U$ and an orientation identifies this space with $\mathbb{Q}$. Now, the symmetric space of the connected component $\mathrm{SO}^{+}(V)$ when $V$ is of signature $(n,2)$ is a complex manifold of dimension $n$, which the choice of $U$ and a basis for $I \subseteq U$ presents as a double fibration. The base space of both fibration is the upper half-plane $\mathcal{H}$, the first fibration over it is the tangent of the universal elliptic curve tensored with $\Lambda$, and the last one is essentially an additional coordinate from $\mathcal{H}$. As for the action of $\Gamma_{L,I}$, the coordinate from $\mathrm{Hom}^{as}(U^{*},U)\cong\mathbb{Z}$ operates on the upper fibers by translations, the one from $\mathrm{Hom}(\Lambda^{*},I)\cong\Lambda\oplus\Lambda$ to translations on the fiber $\Lambda_{\mathbb{C}}$ of the basic fibration, the group $\Gamma_{\Lambda}$ (which is finite since $\Lambda$ is positive definite) also acts on $\Lambda_{\mathbb{C}}$, and finally the coordinate from $\mathrm{SL}(I_{L^{*}},I)$, which is now a congruence subgroup of $\mathrm{SL}_{2}(\mathbb{Z})$, operates on the base space $\mathcal{H}$. Dividing by $\mathrm{Hom}^{as}(U^{*},U)$ the coordinate of the upper fiber becomes a punctured disc coordinate, and the toroidal boundary component is obtained by completing the puncture of the disc. Now, the quotient of the universal family by $\mathrm{Hom}(\Lambda^{*},I)$ produces the universal elliptic curve over $\mathcal{H}$ tensored with $\Lambda$, and dividing by the small congruence subgroup $\mathrm{SL}(I_{\iota},I)$ gives an open Kuga--Sato variety. Recalling the remaining parts of $\Gamma_{L,I}$, we obtain the following result.
\begin{thmnn}
The boundary component in question is a finite quotient of a Kuga--Sato variety, by both the cocycle relation and $\Gamma_{\Lambda}$.
\end{thmnn}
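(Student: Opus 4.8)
The plan is to follow the action of $\Gamma_{L,I}$ through the double fibration, dividing out the normal subgroups one layer at a time, identifying each quotient with a familiar geometric object, and only at the very end completing the punctured disc to obtain the boundary component. First I would fix the coordinates coming from the complement $\tilde{U}$ and a basis for $I \subseteq U$, presenting the symmetric space of $\mathrm{SO}^{+}(V)$ as the total space of a tower whose base is $\mathcal{H}$, whose intermediate space is the $\Lambda_{\mathbb{C}}$-bundle over $\mathcal{H}$, and whose top space carries the extra upper coordinate. Using the description of $\overline{\Gamma}_{L,I}$ from the structure theorem together with the formula for the $\mathrm{Hom}^{as}(U^{*},U)$-coordinate supplied by Proposition \ref{paragrpZ}, I would check that the action is fibered: the coordinate from $\mathrm{Hom}^{as}(U^{*},U) \cong \mathbb{Z}$ translates the top fiber, the coordinate from $\mathrm{Hom}(\Lambda^{*},I) \cong \Lambda \oplus \Lambda$ translates the fiber $\Lambda_{\mathbb{C}}$, the finite group $\Gamma_{\Lambda}$ acts $\mathbb{C}$-linearly on that fiber, and the congruence subgroup $\mathrm{SL}(I_{L^{*}},I)$ acts on $\mathcal{H}$; crucially, by the structure theorem the $\mathrm{Hom}^{as}(U^{*},U)$-coordinate does not involve $\gamma \in \Gamma_{\Lambda}$, which will matter for the completion.

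Next comes the sequence of quotients. Dividing the top space by $\mathrm{Hom}^{as}(U^{*},U) \cong \mathbb{Z}$ converts the upper coordinate into a punctured-disc coordinate, so the quotient is a punctured disc bundle over the $\Lambda_{\mathbb{C}}$-bundle. Dividing that bundle by $\mathrm{Hom}(\Lambda^{*},I) \cong \Lambda \oplus \Lambda$ replaces each fiber $\Lambda_{\mathbb{C}}$ by $E_{\tau} \otimes \Lambda$, where $E_{\tau}$ has period lattice $\mathbb{Z} + \mathbb{Z}\tau$, giving the universal elliptic curve over $\mathcal{H}$ tensored with $\Lambda$; passing to the quotient of $\mathcal{H}$ by the small congruence subgroup $\mathrm{SL}(I_{\iota},I)$ then yields an open Kuga--Sato variety, over which the punctured-disc bundle descends. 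Since $\mathrm{SL}(I_{\iota},I) \subseteq \mathrm{SL}(I_{L^{*}},I)$ with finite index, what remains of the group is the finite quotient $\mathrm{SL}(I_{L^{*}},I)/\mathrm{SL}(I_{\iota},I)$ together with $\Gamma_{\Lambda}$.

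It then remains to identify the action of this finite group on the open Kuga--Sato variety. By the structure theorem an element $M$ of $\mathrm{SL}(I_{L^{*}},I)$ must be accompanied by a homomorphism $\psi \in \mathrm{Hom}(\Lambda,I)$ whose class modulo $\mathrm{Hom}(\Lambda^{*},I)$ equals the value of the cocycle on $M$; after the quotients above this class becomes translation by a torsion section of $E_{\tau} \otimes \Lambda$ attached to $M$ --- exactly the cocycle relation of the statement --- composed with the modular action on the base coming from the image of $M$ in $\mathrm{SL}_{2}(\mathbb{Z})$. The factor $\Gamma_{\Lambda}$, finite because $\Lambda$ is positive definite, acts fiberwise and commutes with the identifications already made. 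Hence the open boundary stratum is the quotient of an open Kuga--Sato variety by this finite group acting through the cocycle translations and through $\Gamma_{\Lambda}$; completing the puncture of the disc --- legitimate because, by the structure theorem, the $\mathrm{Hom}^{as}(U^{*},U)$-coordinate is independent of $\gamma$ --- adds the boundary component itself, which is therefore a finite quotient of a Kuga--Sato variety as claimed.

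The main obstacle I expect is checking that the cocycle twisting is compatible with these fibered quotients: one must show that the class in $\mathrm{Hom}(\Lambda,I)/\mathrm{Hom}(\Lambda^{*},I)$ prescribed by the cocycle really descends to a well-defined torsion translation on $E_{\tau} \otimes \Lambda$, and that this is consistent as $\tau$ varies under the residual action of $\mathrm{SL}(I_{L^{*}},I)/\mathrm{SL}(I_{\iota},I)$ on the base. This is precisely where the cocycle identity and the definitions of $I_{L^{*}}$ and $I_{\iota}$ have to be invoked, and where one has to ensure that the finite-group action does not obstruct the completion of the punctured disc.
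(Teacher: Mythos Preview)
Your plan matches the paper's route almost step for step: Proposition~\ref{GVRcoor} and Corollary~\ref{puncdisc} give the double fibration and the punctured disc, Proposition~\ref{ELambdaH} divides by $\Lambda\times\Lambda$ to get $\mathcal{E}\otimes\Lambda$, Proposition~\ref{fincover} divides by the pre-image of $\Gamma_{\iota}\times\{\mathrm{Id}_{\Lambda}\}$ to reach the Kuga--Sato variety $W_{\iota}^{\Lambda}$, and Theorem~\ref{boundcomp} finishes with $\Gamma_{\Lambda}$ and the cocycle.

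There is one wrinkle in your phrasing that the paper handles differently and that you should fix. You write that after dividing by $\mathrm{SL}(I_{\iota},I)$, ``what remains of the group is the finite quotient $\mathrm{SL}(I_{L^{*}},I)/\mathrm{SL}(I_{\iota},I)$,'' and later speak of the ``residual action of $\mathrm{SL}(I_{L^{*}},I)/\mathrm{SL}(I_{\iota},I)$ on the base.'' But the paper explicitly notes (after Proposition~\ref{actcoset}) that $\mathrm{SL}(I_{\iota},I)$ is \emph{not} necessarily normal in $\mathrm{SL}(I_{L^{*}},I)$, so this coset space is not a group and there is no residual group action on $W_{\iota}^{\Lambda}$ in the naive sense. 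The paper avoids this by not passing through $W_{\iota}^{\Lambda}$ as an intermediate object at all when defining the final quotient: instead it uses the cocycle identity of Proposition~\ref{actcoset} to define a genuine twisted action of the full group $\Gamma_{L^{*}}$ on $\mathcal{E}\otimes\Lambda$ (each $M$ acting by M\"obius on the base and by the torsion translation $b_{M}$ on the fibers), and calls the resulting quotient $W_{L^{*}}^{\Lambda,b}$. Then $W_{\iota}^{\Lambda}$ appears only as a finite cover of $W_{L^{*}}^{\Lambda,b}$, not as something acted on by a finite group. Your anticipated obstacle in the last paragraph is exactly this point; the resolution is to use the cocycle to build the $\Gamma_{L^{*}}$-action directly on $\mathcal{E}\otimes\Lambda$, rather than to look for a quotient-group action on the intermediate Kuga--Sato variety.
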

The precise form appears in Theorem \ref{boundcomp}. We remark that the disc coordinate defining this boundary component can be multiplied by rather non-trivial roots of unity in the process, though its vanishing locus is clearly unaffected.

\smallskip

The paper is divided into 4 sections. Section \ref{ParSubgp} presents the well-known form of the parabolic subgroups of orthogonal groups over fields, mainly to introduce the necessary notation. Section \ref{Lattices} shows how lattices and their duals decompose in suitably chosen coordinates, and Section \ref{ArithSbgp} then describes the structure of the parabolic subgroups over $\mathbb{Z}$. Finally, Section \ref{TorComp} applies these results for determining the form of the canonical boundary components in the toroidal compactifications of the corresponding orthogonal Shimura varieties.

\section{Parabolic Subgroups of Orthogonal Groups \label{ParSubgp}}

In this section we present the structure of an arbitrary maximal parabolic subgroup of an orthogonal group of a vector space over a field of characteristic different from 2. All of the results here are well-known, and can be deduced in a standard manner from the general theory involving roots, appearing in the classical books \cite{[Bor]} and \cite{[H]} (among others). We give them here in order to fix the notation and because they serve as a basis for the investigation over $\mathbb{Z}$ in the later sections.

\medskip

Let $V$ be a non-degenerate finite-dimensional quadratic space over a field $\mathbb{F}$ of characteristic different from 2. We shall denote the image of two vectors $u$ and $v$ in $V$ under the associated bilinear form by $(u,v)$, and we shorthand $(v,v)$ to $v^{2}$ for every $v \in V$. The quadratic form on $V$ therefore sends $v \in V$ to $\frac{v^{2}}{2}$, and we have the \emph{orthogonal group} \[\mathrm{O}(V):=\mathrm{Aut}_{\mathbb{F}}\big(V,(\cdot,\cdot)\big)=\{A\in\mathrm{GL}(V)|\;(Au,Av)=(u,v)\mathrm{\ for\ all\ }u\mathrm{\ and\ }v\mathrm{\ in\ }V\}.\] Given a subspace $U$ of $V$, as well as an arbitrary space $X$ over $\mathbb{F}$, we write \[U^{\perp}=\{v \in V|\;(u,v)=0\mathrm{\ for\ every\ }u \in U\}\quad\mathrm{and}\quad X^{*}:=\mathrm{Hom}_{\mathbb{F}}(X,\mathbb{F})\] for the subspace orthogonal to $U$ and the space dual to $X$ respectively. The bilinear form induces an isomorphism
\begin{equation}
V^{*} \cong V,\mathrm{\ and\ more\ generally\ }U^{\perp} \cong V/U^{\perp}\mathrm{\ for\ every\ subspace\ }U \subseteq V. \label{isobil}
\end{equation}

We recall that $U$ is called \emph{isotropic} when
\[(u,w)=0\mathrm{\ for\ all\ }u\mathrm{\ and\ }v\mathrm{\ in\ }U \Longleftrightarrow U \subseteq U^{\perp},\] and then we set \[\mathcal{P}_{U}:=\mathrm{St}_{\mathrm{O}(V)}U \quad\mathrm{as\ well\ as}\quad W:=U^{\perp}/U\] to be the \emph{parabolic subgroup} of $\mathrm{O}(V)$ that is associated with $U$ and the corresponding non-degenerate quadratic space respectively, the latter being of dimension $\dim V-2\dim U$. These are all the maximal parabolic subgroups of $\mathrm{O}(V)$, and we are interested in their structure.

Now, since an element $\mathcal{P}_{U}$ must also preserve $U^{\perp}$ and therefore act on the quotient $W$, we immediately obtain the following first result.
\begin{lem}
The parabolic group $\mathcal{P}_{U}$ comes with a natural map into the product $\mathrm{GL}(U)\times\mathrm{O}(W)$. \label{unipker}
\end{lem}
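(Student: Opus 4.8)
The plan is to build the map factor by factor and then take the product. First I would note that, essentially by definition, every $A\in\mathcal{P}_{U}=\mathrm{St}_{\mathrm{O}(V)}U$ sends $U$ into itself; as $A$ is invertible and $U$ is finite-dimensional, the restriction $A|_{U}$ lies in $\mathrm{GL}(U)$, and $A\mapsto A|_{U}$ is a group homomorphism $\mathcal{P}_{U}\to\mathrm{GL}(U)$ since the restriction of a composition is the composition of the restrictions.

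Next I would check that such an $A$ automatically stabilizes $U^{\perp}$ as well. Indeed, for $v\in U^{\perp}$ and $u\in U$ one has $(Av,u)=(Av,A(A^{-1}u))=(v,A^{-1}u)=0$, because $A^{-1}$ also stabilizes $U$ (it is the inverse of the bijection $A|_{U}$ of $U$) and $v$ is orthogonal to $U$; hence $Av\in U^{\perp}$, and applying the same argument to $A^{-1}$ gives $A(U^{\perp})=U^{\perp}$. Therefore $A$ descends to a linear automorphism $\overline{A}$ of the quotient $W=U^{\perp}/U$. The bilinear form on $V$ induces on $W$ the non-degenerate form referred to above, defined on representatives in $U^{\perp}$, and since $A$ preserves $(\cdot,\cdot)$ on $U^{\perp}$ this $\overline{A}$ preserves the induced form, so $\overline{A}\in\mathrm{O}(W)$. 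Again $A\mapsto\overline{A}$ is a homomorphism, by functoriality of passage to the quotient.

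Taking the product of these two homomorphisms yields the desired natural map $\mathcal{P}_{U}\to\mathrm{GL}(U)\times\mathrm{O}(W)$, and naturality is clear because both constructions depend only on the pair $(V,U)$ and the form. There is no genuine obstacle here; the only point requiring a line of verification is that an orthogonal element stabilizing $U$ necessarily stabilizes $U^{\perp}$, which is exactly what makes the induced action on $W$ well-defined. (One may also record for later use that the kernel of this map consists of the elements of $\mathcal{P}_{U}$ acting trivially on both $U$ and $W$, which is the statement implicit in the label of the lemma, but this is not needed for the assertion as stated.)
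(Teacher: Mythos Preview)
Your proof is correct and follows exactly the same approach as the paper: restrict $A$ to $U$ for the $\mathrm{GL}(U)$-factor, observe that $A$ preserves $U^{\perp}$ and hence descends to an isometry of $W=U^{\perp}/U$ for the $\mathrm{O}(W)$-factor. The paper treats this as essentially immediate, stating only that an element of $\mathcal{P}_{U}$ ``must also preserve $U^{\perp}$ and therefore act on the quotient $W$,'' and records the explicit description of the map right after the lemma; you have simply supplied the routine verifications that the paper omits.
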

Explicitly, the map takes $A\in\mathcal{P}_{U}$ to the pair consisting of $M=A\big|_{U}\mathrm{GL}(U)$ and of the map $\gamma$ on $W$ that is induced from $A\big|_{U^{\perp}}$, where the latter lies in $\mathrm{O}(W)$ because $A\big|_{U^{\perp}}$ preserves the bilinear form on $U^{\perp}$ and hence also on $W$. Note that there is another quotient on which $A$ acts, which is $V/U^{\perp}$, and is identified with $U^{*}$ via Equation \eqref{isobil}. For determining this we take $u \in U$ and $v \in V$ such that $v+U^{\perp}$ corresponds to $v^{*} \in U^{*}$ and write \[(v^{*},u)=(v,u)=(Av,Au)=(Av,Mu)=\big((Av)^{*},Mu\big)=\big(M^{*}(Av)^{*},u\big),\] where $M^{*}\in\mathrm{GL}(U^{*})$ is the map that is dual to $M$. It follows the action of $A$ on $V/U^{\perp} \cong U^{*}$ must be like that of the inverse $M^{-*}=(M^{*})^{-1}$ of $M^{*}$, and therefore produces no new information.

It turns out, as we shall soon see, that the quotient from Lemma \ref{unipker} is the Levi quotient of $\mathcal{P}_{U}$, and the \emph{unipotent radical} of $\mathcal{P}_{U}$ is the kernel $\mathcal{W}_{U}$ of the map from that lemma. In order to analyze it, we first observe that Equation \eqref{isobil} gives $W^{*} \cong W$ (since $W$ is also non-degenerate), which allows us to view the dual of every map $\psi:W \to U$ as a map $\psi^{*}:U^{*} \to W$. In addition, a map $\eta:U^{*} \to U$ is called \emph{symmetric} (resp. \emph{anti-symmetric}) if the bilinear form \[(u^{*},v^{*}) \in U^{*} \times U^{*}\mapsto(u^{*},\eta v^{*}) \in U^{*} \times U\mapsto\mathrm{\ the\ pairing\ }(u^{*},\eta v^{*})\in\mathbb{F}\] is symmetric (resp. anti-symmetric) under interchanging the two variables. We denote the space of anti-symmetric linear maps from $U^{*}$ to $U$ by $\mathrm{Hom}_{\mathbb{F}}^{as}(U^{*},U)$.

We also recall the following definition.
\begin{defn}
Let $X$ be a vector space over $\mathbb{F}$ supplied with an anti-symmetric bilinear map $B$ to another vector space $Y$ over $\mathbb{F}$. Then the associated \emph{Heisenberg group} is defined to be \[H(X,Y):=X \times Y\mathrm{\ with\ the\ product\ rule\ }(x,y)\cdot(\xi,\eta)=\big(x+\xi,y+\eta+B(x,\xi)\big).\] On the other hand, if a space $Z$ carries a $Y$-valued \emph{symmetric} bilinear map $(z,\zeta) \mapsto \langle z,\zeta \rangle \in Y$, then on $Z \times Z$ we have the anti-symmetric bilinear map \[\big((z,w),(\zeta,\omega)\big)\mapsto\langle z,\omega \rangle-\langle w,\zeta \rangle,\mathrm{\ and\ we\ denote\ }\tilde{H}(Z,Y):=H(Z \times Z,Y).\] \label{Heisdef}
\end{defn}
The group $H(X,Y)$ from Definition \ref{Heisdef} lies in a short exact sequence
\begin{equation}
0 \to Y \to H(X,Y) \to X \to 0, \label{Heisseq}
\end{equation}
where $Y$ is contained in the center of $H(X,Y)$. When $B$ is non-degenerate, $Y$ is the full center of $H(X,Y)$. Since $Y$ is central and $X$ is commutative, the commutator of the pair $(x,y)$ and $(\xi,\eta)$ lies in $Y$ and depends only on $x$ and $\xi$---indeed, it equals $\big(0,2B(x,\xi)\big)$. In fact, we can now prove that this condition determines $H(X,Y)$ as an extension of $X$ by $Y$.
\begin{prop}
Let $H$ be a group mapping onto space $X$ with central kernel $Y$ as in Equation \eqref{Heisseq}, and assume that when $h$ and $k$ are elements of $H$, with $X$-images $x$ and $\xi$ respectively, then the commutator $[h,k]$ is $2B(x,\xi) \in Y$ for some anti-symmetric $Y$-valued bilinear form on $X$. Then $H \cong H(X,Y)$ (with this $B$) as extensions of $X$ by $Y$. \label{Heischar}
\end{prop}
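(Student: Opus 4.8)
The plan is to use the $2$-cocycle description of the central extension \eqref{Heisseq} and then exploit the commutator hypothesis to identify the cocycle class with that of $B$. First I would fix a set-theoretic section $s\colon X\to H$ of the projection $\pi\colon H\to X$ with $s(0)=e$, write $\iota\colon Y\hookrightarrow H$ for the inclusion, and let $f(x,\xi):=s(x)s(\xi)s(x+\xi)^{-1}\in Y$ be the associated normalized $2$-cocycle. Since $Y$ is central, every element of $H$ is uniquely of the form $\iota(y)s(x)$ and the product reads $\iota(y)s(x)\cdot\iota(\eta)s(\xi)=\iota\big(y+\eta+f(x,\xi)\big)s(x+\xi)$; thus $H$ is precisely the extension of $X$ by $Y$ classified by $f$. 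On the other side, $H(X,Y)$ with its tautological section $x\mapsto(x,0)$ is the extension classified by $B$ itself — and $B$ is indeed a normalized $2$-cocycle because it is bilinear (note also $B(x,x)=0$, as $\mathrm{char}\,\mathbb{F}\neq2$).

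Next I would feed in the commutator condition. From $s(x)s(\xi)=\iota\big(f(x,\xi)\big)s(x+\xi)$ and the same identity with $x$ and $\xi$ interchanged one computes $[s(x),s(\xi)]=\iota\big(f(x,\xi)-f(\xi,x)\big)$, whereas the hypothesis, applied to $h=s(x)$ and $k=s(\xi)$, says this equals $\iota\big(2B(x,\xi)\big)$. Since $\iota$ is injective,
\[
f(x,\xi)-f(\xi,x)=2B(x,\xi)\qquad\text{for all }x,\xi\in X .
\]
Put $h:=f-B$. Being a difference of normalized $2$-cocycles, $h$ is again one, and the displayed identity gives $h(x,\xi)-h(\xi,x)=2B(x,\xi)-2B(x,\xi)=0$: the cocycle $h$ is \emph{symmetric}.

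The one step that is not pure bookkeeping is that such a symmetric normalized $2$-cocycle $h\colon X\times X\to Y$ must be a coboundary. It classifies an abelian extension $0\to Y\to E\to X\to0$; but $X$ and $Y$ being vector spaces over a field of characteristic $0$, the abelian group $E$ is torsion-free and divisible, hence itself a $\mathbb{Q}$-vector space, so the sequence splits. Equivalently, $h=\partial g$ for some $g\colon X\to Y$ with $g(0)=0$, i.e. $h(x,\xi)=g(x)+g(\xi)-g(x+\xi)$. (This is the place where working over $\mathbb{Q}$ is essential: over $\mathbb{Z}$ or $\mathbb{F}_{p}$ there are non-split symmetric self-extensions such as $\mathbb{Z}/p^{2}$, and the statement as phrased would fail.) I expect this to be the main obstacle; the rest is routine.

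Finally I would assemble the isomorphism. Replacing $s$ by the normalized section $\sigma(x):=\iota(-g(x))s(x)$, its cocycle becomes
\[
\sigma(x)\sigma(\xi)\sigma(x+\xi)^{-1}=\iota\big(f(x,\xi)+g(x+\xi)-g(x)-g(\xi)\big)=\iota\big(f(x,\xi)-h(x,\xi)\big)=\iota\big(B(x,\xi)\big).
\]
Hence $\Phi\colon H(X,Y)\to H$, $(x,y)\mapsto\iota(y)\sigma(x)$, is a bijection (as $\sigma$ is a section and $\iota$ identifies $Y$ with $\ker\pi$), and it is a homomorphism by the product computation of the first paragraph with $f$ replaced by $B$. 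Since $\Phi$ restricts to the identity on $Y$ and induces the identity on $X$, it is an isomorphism of extensions, which is what we want. The only thing to watch throughout is the sign bookkeeping in passing between the several equivalent cocycle conventions.
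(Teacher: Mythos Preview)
Your argument is correct, and it takes a genuinely different route from the paper's. The paper proceeds by an explicit construction: it fixes an ordered basis $(u_i)_{i\in I}$ of $X$, lifts each $u_i$ to $H$, observes via the commutator hypothesis that the preimage of each line $\mathbb{F}u_i$ is abelian and hence admits a one-parameter family of lifts $h_{cu_i}$, and then defines $h_x$ for a general $x=\sum_j c_j u_{i_j}$ as the ordered product $h_{c_1 u_{i_1}}\cdots h_{c_n u_{i_n}}$ divided by the correction $\sum_{j<k}B(c_j u_{i_j},c_k u_{i_k})$; a direct computation with the commutation relations then verifies $h_x h_\xi = h_{x+\xi}\cdot B(x,\xi)$, which gives the isomorphism. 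You instead work cohomologically: an arbitrary section produces a $2$-cocycle $f$, the commutator hypothesis forces $f-B$ to be symmetric, and you invoke the fact that symmetric $2$-cocycles between $\mathbb{Q}$-vector spaces are coboundaries (because the resulting abelian extension is torsion-free divisible, hence splits) to identify the extension classes.

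The paper explicitly mentions your approach as an alternative in the remark immediately following its proof. Your argument is more conceptual and pinpoints exactly where characteristic~$0$ enters (the splitting of the symmetric extension), whereas the paper's hands-on construction is more elementary but uses the same fact implicitly when it asserts the existence of the one-parameter subgroups $c\mapsto h_{cu_i}$. Either way the content is the same: once the anti-symmetrization of the cocycle is pinned down as $2B$, only the ``abelian part'' of the extension remains, and over $\mathbb{Q}$ that part is trivial.
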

Note that as $X$ and $Y$ are commutative while $H$ is not, we write the group operation of $H$ multiplicatively while the operations on $X$ and $Y$ additively, so that on $Y \subseteq H$ addition and multiplication have the same meaning.

\begin{proof}
Choose a basis $(u_{i})_{i \in I}$ for $X$ as well as some linear order on the set of indices $I$, and lift these basis elements arbitrarily to $H$. Since elements of $H$ mapping to multiples of the same vector in $X$ commute (by the anti-symmetry of $B$), our lifts generate a one-parameter subgroup of $H$ for every $i \in I$, mapping to the corresponding 1-dimensional vector spaces of $X$. Denote the resulting lift of $cu_{i} \in X$ by $h_{cu_{i}} \in H$, and recall that the presentation of any element of $X$ involves only finitely many basis elements. We can therefore define the lift \[x=\sum_{j=1}^{n}c_{j}u_{i_{j}}\mapsto\textstyle{h_{x}:=h_{c_{1}u_{i_{1}}} \ldots h_{c_{n}u_{i_{n}}}\Big/\sum_{j<k}B(c_{j}u_{i_{j}},c_{k}u_{i_{k}})},\] where we order the appearing indices such that $i_{j}<i_{k}$ wherever $j<k$ (this is well-defined by the centrality of $Y$). Since $H$ lies in a short exact sequence like the one from Equation \eqref{Heisseq}, every element of $H$ is of the form $h_{x}y$ for some $x \in X$ and $y \in Y$, and we claim that the map sending this element to $(x,y) \in H(X,Y)$ yields the desired isomorphism. By this formula and the centrality of $Y$, it suffices to verify that for $x$ and above and another element, say $\xi=\sum_{j=1}^{n}d_{j}u_{i_{j}} \in X$ (where we may trivially increase the set $\{u_{i_{j}}\}_{j=1}^{n}$ spanning $x$ to span also $\xi$), we have \[h_{x} \cdot h_{\xi}=h_{x+\xi} \cdot B(x,\xi),\mathrm{\ with\ }B(x,\xi) \in Y\] (then our map will already be an isomorphism as extensions). But indeed, the left hand side is \[\textstyle{h_{c_{1}u_{i_{1}}} \ldots h_{c_{n}u_{i_{n}}}h_{d_{1}u_{i_{1}}} \ldots h_{d_{n}u_{i_{n}}}\Big/\sum_{j<k}B(c_{j}u_{i_{j}},c_{k}u_{i_{k}})B(d_{j}u_{i_{j}},d_{k}u_{i_{k}})}\] (by the centrality of $Y$), which equals \[h_{(c_{1}+d_{1})u_{i_{1}}} \ldots h_{(c_{n}+d_{n})u_{i_{n}}}\sum_{j<k}\big[2B(c_{k}u_{i_{k}},d_{j}u_{i_{j}})-B(c_{j}u_{i_{j}},c_{k}u_{i_{k}})B(d_{j}u_{i_{j}},d_{k}u_{i_{k}})\big]\] by the commutation relations, and the latter expression yields the desired right hand side by the definition of $h_{x+\xi}$ and the anti-symmetry and bi-additivity of $B$. This proves the proposition.
\end{proof}
We remark that the anti-symmetry of $B$ and the commutation relation also imply that the form of the lift of $h_{x}$ in the proof of Proposition \ref{Heischar} does not depend on the ordering of the chosen basis $(u_{i})_{i \in I}$ or on the product we take for defining $h_{x}$, as long as the ordering of the indices coincides with the order in the product. In addition, one easily verifies that in \emph{any} group $H$ lying in a short exact sequence as in Equation \eqref{Heisseq} with $Y$ central, the commutator map factors through a map from $X \times X$ to $Y$, which must be bi-additive and anti-symmetric. Hence every such extension over $\mathbb{Q}$ is a Heisenberg group (by Proposition \ref{Heischar}), and the same happens for \emph{continuous} extensions over $\mathbb{R}$ or over $\mathbb{Q}_{p}$, but in general such extensions may involve anti-symmetric bi-additive maps that are not bilinear (e.g., semi-bilinear maps involving Galois automorphisms). Proposition \ref{Heischar} may also be proved by general arguments relating group extensions to the cohomology of one Abelian group acting trivially on another Abelian group, under appropriate conditions (in particular, the action of 2 on the second group must be invertible).

The basic structure of the group $\mathcal{W}_{U}$ can now be described.
\begin{prop}
The unipotent radical $\mathcal{W}_{U}$ is a subgroup of the Heisenberg group $H\big(\mathrm{Hom}_{\mathbb{F}}(W,U),\mathrm{Hom}_{\mathbb{F}}^{as}(U^{*},U)\big)$, where the anti-symmetric map from $\mathrm{Hom}_{\mathbb{F}}(W,U)$ into $\mathrm{Hom}_{\mathbb{F}}^{as}(U^{*},U)$ sends $\psi$ and $\varphi$ in the former space to $\frac{\psi\varphi^{*}-\varphi\psi^{*}}{2}$ in the latter. The normal subgroup $\mathrm{Hom}_{\mathbb{F}}^{as}(U^{*},U)$ of this Heisenberg group is contained in $\mathcal{W}_{U}$. \label{unipstruc}
\end{prop}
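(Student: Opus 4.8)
The plan is to exhibit $\mathcal{W}_{U}$ concretely as an extension of the shape in Equation \eqref{Heisseq} and then invoke Proposition \ref{Heischar}. First I would observe that an element $A$ of $\mathcal{W}_{U}$, being in the kernel of the map of Lemma \ref{unipker}, acts trivially on $U$, on $W=U^{\perp}/U$, and --- since $M=A|_{U}$ is the identity in the computation following Lemma \ref{unipker} --- on $V/U^{\perp}\cong U^{*}$ as well. Hence $A-1$ carries $U^{\perp}$ into $U$ and annihilates $U$, so it induces a map $\psi_{A}\in\mathrm{Hom}_{\mathbb{F}}(W,U)$; using $(AA'-1)=A(A'-1)+(A-1)$ together with the fact that $A$ fixes $U$ pointwise, I would check that $\rho\colon A\mapsto\psi_{A}$ is a group homomorphism $\mathcal{W}_{U}\to\mathrm{Hom}_{\mathbb{F}}(W,U)$.

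Next I would analyse $\ker\rho$. If $A\in\ker\rho$ then $A$ fixes $U^{\perp}$ pointwise, so $A-1$ annihilates $U^{\perp}$ and maps $V$ into $U^{\perp}$; moreover for $v\in V$ and $w\in U^{\perp}$ one has $((A-1)v,w)=(Av,Aw)-(v,w)=0$, whence $(A-1)(V)\subseteq(U^{\perp})^{\perp}=U$ by non-degeneracy, so $A-1$ descends to $\eta_{A}\colon V/U^{\perp}\cong U^{*}\to U$. Since $A^{-1}$ lies in $\ker\rho$ too, $(A-1)(A^{-1}-1)=0$, i.e.\ $A+A^{-1}=2$ on $V$; feeding this into $(v,Av')+(v',Av)-2(v,v')=\big((A+A^{-1}-2)v,v'\big)$ shows that $\eta_{A}$ is anti-symmetric, so $\eta_{A}\in\mathrm{Hom}_{\mathbb{F}}^{as}(U^{*},U)$, and $A\mapsto\eta_{A}$ is again a homomorphism. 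It is surjective: for $\eta\in\mathrm{Hom}_{\mathbb{F}}^{as}(U^{*},U)$ the map $1+\eta\pi$, with $\pi\colon V\to V/U^{\perp}\cong U^{*}$ the projection, is orthogonal --- the cross terms cancel by the anti-symmetry of $\eta$ and the isotropy of $U$ --- and lies in $\ker\rho$ with invariant $\eta$. Thus $\ker\rho\cong\mathrm{Hom}_{\mathbb{F}}^{as}(U^{*},U)$, which already gives the second assertion of the proposition, and I would record that $\ker\rho$ is central in $\mathcal{W}_{U}$, which is immediate from $A$ fixing $U$ and acting trivially on $V/U^{\perp}$.

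To finish I would produce the extension structure. Choosing an isotropic complement $\tilde{U}$ for $U^{\perp}$ in $V$ (available by Witt's theorem over the field $\mathbb{F}$) and identifying $\tilde{U}\cong U^{*}$, the transformation acting as the identity on $U$, as $w\mapsto w+\psi(w)$ on $W$, and on $\tilde{U}$ as $\tilde{u}\mapsto\tilde{u}-\psi^{*}(u^{*})-\tfrac{1}{2}\psi\psi^{*}(u^{*})$ --- where $u^{*}\in U^{*}$ corresponds to $\tilde{u}$, $\psi^{*}(u^{*})\in W$ and $\psi\psi^{*}(u^{*})\in U$ --- turns out to be orthogonal and to map to $\psi$ under $\rho$. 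Hence $\rho$ is onto and $\mathcal{W}_{U}$ sits in a short exact sequence as in Equation \eqref{Heisseq} with central kernel $\mathrm{Hom}_{\mathbb{F}}^{as}(U^{*},U)$. Finally I would compute the commutator of the elements lying over $\psi$ and over $\varphi$ --- most cheaply as the discrepancy between those two explicit lifts composed in the two orders --- and read off a value in $\ker\rho$ equal, up to the sign convention in $\tilde{U}\cong U^{*}$, to $\psi\varphi^{*}-\varphi\psi^{*}=2B(\psi,\varphi)$ with $B(\psi,\varphi)=\tfrac{\psi\varphi^{*}-\varphi\psi^{*}}{2}$. Proposition \ref{Heischar} then identifies $\mathcal{W}_{U}$ with $H\big(\mathrm{Hom}_{\mathbb{F}}(W,U),\mathrm{Hom}_{\mathbb{F}}^{as}(U^{*},U)\big)$ for this $B$; over the field $\mathbb{F}$ the inclusion is an equality, but the weaker ``subgroup'' statement is what will persist to the integral setting of the later sections, where an isotropic complement need not exist.

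The step I expect to be the real obstacle is the commutator computation: one must carry the identifications $W\cong W^{*}$ and $\tilde{U}\cong U^{*}\cong V/U^{\perp}$ from Equation \eqref{isobil} through consistently, interpret $\psi^{*}$ and $\varphi^{*}$ as genuine maps between these spaces, and pin down the factor $\tfrac{1}{2}$ --- it is exactly the term $-\tfrac{1}{2}\psi\psi^{*}$ forced on the $\tilde{U}$-block by orthogonality, and this is the one place the hypothesis $\mathrm{char}\,\mathbb{F}\neq2$ is used. The anti-symmetry of $\eta_{A}$ similarly hinges on the small observation that elements of $\ker\rho$ satisfy $(A-1)^{2}=0$; everything else is routine bookkeeping with block matrices.
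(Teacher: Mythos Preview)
Your proof is correct and follows the same strategy as the paper's: construct the homomorphism to $\mathrm{Hom}_{\mathbb{F}}(W,U)$, identify its kernel with $\mathrm{Hom}_{\mathbb{F}}^{as}(U^{*},U)$, compute the commutator, and apply Proposition~\ref{Heischar}. The only tactical differences are that you use the opposite sign convention ($A-1$ rather than the paper's $1-A$), you already prove surjectivity of $\rho$ via an isotropic complement (the paper defers this to Proposition~\ref{pargrpgen}, where it deliberately allows non-isotropic complements with a view toward the integral setting), and the paper computes the commutator intrinsically---expanding $v-ABv$ and $v-BAv$ directly in terms of $\psi$, $\varphi$, $\psi^{*}$, $\varphi^{*}$ without choosing any complement---whereas you read it off from composing explicit lifts; since $\ker\rho$ is central this suffices.
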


\begin{proof}
We first construct a map from $\mathcal{W}_{U}$ to $\mathrm{Hom}_{\mathbb{F}}(W,U)$. Recall that an element $A\in\mathcal{W}_{U}$ acts trivially on $U$, so that the map sending $v \in V$ to $v-Av$ is well-defined on $V/U$. Moreover, since the restriction of $A\big|_{U^{\perp}}$ operates trivially on $W=U^{\perp}/U$, we deduce that the map
\begin{equation}
\psi:W \to U,\quad\psi:w=z+U \mapsto z-Az \label{psidef}
\end{equation}
is well-defined and indeed takes values in $U$. It is easily verified that the map from $\mathcal{W}_{U}$ to $\mathrm{Hom}_{\mathbb{F}}(W,U)$ defined by $A\mapsto\psi$ is a homomorphism of groups.

Now, since $A$ is an isometry with $A\big|_{U}=\mathrm{Id}_{U}$, it acts trivially also on $V/U^{\perp}$, and therefore $Av-v \in U^{\perp}$ for every $v \in V$. Combining this with the isometry property of $A$, we obtain for $v \in V$ and $z \in U^{\perp}$ the equality \[0=(Av,Az)-(v,z)=(Av-v,z)+(v,Az-z)=(Av-v,z)-\big(v,\psi(z+U)\big)\] (since $Av-v \in U^{\perp}$ is perpendicular to $Az-z \in U$ and by Equation \eqref{psidef}). Now, the fact that $Av-v \in U^{\perp}$ allows us to replace $z$ in the latter equation by $w=z+U \in W$, and we observe that the image $Av-v+U \in W$ of $Av-v$ depends only on the image of $v$ in $V/U^{\perp}$ (indeed, replacing $v$ by $v+z$ for $z \in U^{\perp}$ changes $Av-v$ only by $Az-z \in U$, and take the image modulo $U$). Recalling the isomorphism the isomorphism from Equation \eqref{isobil}, we obtain a map $\hat{\psi}:U^{*} \to W$ that sends $v^{*} \in U^{*}$ to the image $Av-v$ for the corresponding element $v+U^{\perp} \in V/U^{\perp}$, and the latter equation then gives
\begin{equation}
(\hat{\psi}v^{*},w)=(Av-v,w)=(v,\psi w)=(v+U^{\perp},\psi w)=(v^{*},\psi w), \label{psidual}
\end{equation}
so that $\hat{\psi}$ must be the map $\psi^{*}$ that is dual to $\psi$.

Next, note that $\ker\big(\mathcal{W}_{U}\to\mathrm{Hom}_{\mathbb{F}}(W,U)\big)$ consists of those elements $A$ of $\mathcal{W}_{U}$ (or of $\mathcal{P}_{U}$, or of $\mathrm{O}(V)$) that satisfy $A\big|_{U^{\perp}}=\mathrm{Id}_{U^{\perp}}$. Moreover, the previous paragraph shows that in this case, if $v \in V$ is such that $v+U^{\perp}$ corresponds to $v^{*} \in U^{*}$ then \[(v-Av)+U=-\psi^{*}(v^{*})=0\ (\mathrm{since\ }\psi^{*}=0)\mathrm{\ and\ hence\ }v-Av \in U,\] and this vector depends only on the value of $v$ in $V/U^{\perp}$. Identify $V/U^{\perp}$ with $U^{*}$ via Equation \eqref{isobil} once again, and denote the resulting map by $\eta:U^{*} \to U$. For $v$ and $w$ in $V$, such that $v+U^{\perp}$ and $w+U^{\perp}$ correspond to $v^{*}$ and $w^{*}$ in $U^{*}$ respectively, we recall that $Av-v$ and $Aw-w$ are in $U$, hence they are perpendicular to one another and their pairing with element of $V$ reduces to the pairing with images in $V/U^{\perp} \cong U^{*}$. We can thus write \[0=(Av,Aw)-(v,w)=(Av-v,w)+(v,Aw-w)=-(\eta v^{*},w^{*})-(v^{*},\eta w^{*})\] (by the definition of $\eta$), which shows that $\eta$ is indeed anti-symmetric. On the other hand, it is clear from the same calculation that given $\eta\in\mathrm{Hom}_{\mathbb{F}}^{as}(U^{*},U)$, the map sending $v \in V$ to $v-\eta v^{*}$, with $v^{*}$ corresponding to $v+U^{\perp}$ as above, is an element of $\mathcal{P}_{U}\subseteq\mathrm{O}(V)$ that lies in $\mathcal{W}_{U}$ and maps to the trivial element of $\mathrm{Hom}_{\mathbb{F}}(W,U)$. Hence $\ker\big(\mathcal{W}_{U}\to\mathrm{Hom}_{\mathbb{F}}(W,U)\big)$ is precisely $\mathrm{Hom}_{\mathbb{F}}^{as}(U^{*},U)$.

Let us now evaluate the commutator of two elements $A$ and $B$ of $\mathcal{W}_{U}$, with respective images $\psi$ and $\varphi$ in $\mathrm{Hom}_{\mathbb{F}}(W,U)$. For this we write, for $v \in V$ with $v^{*} \in U^{*}$ as above, the equality \[v-ABv=(v-Av)+(v-Bv)+(z-Az)\mathrm{\ for\ }z=Bv-v \in U^{\perp},\] in which we recall that $z+U=\varphi v^{*}$ by Equation \eqref{psidual} and therefore the summand $z-Az$ is $\psi(z+U)=\psi\varphi^{*}v^{*}$ by Equation \eqref{psidef}. A similar calculation, but now with $BAv$, produces the equality \[(BA-AB)v=(\psi\varphi^{*}-\varphi\psi^{*})v^{*}\mathrm{\ for\ every\ }v \in V\mathrm{\ with\ }v^{*}\mathrm{\ as\ above}.\] Now, as $v^{*}$ depends only on $v$ modulo $U^{\perp}$, and the difference $A^{-1}B^{-1}v-v$ lies in $U^{\perp}$, we can replace $v$ by $A^{-1}B^{-1}v$ in the left hand side of the last equality, which produces $v-ABA^{-1}B^{-1}v$. But by definition, this is the image of $v^{*}$ under the element of $\mathrm{Hom}_{\mathbb{F}}^{as}(U^{*},U)$ that is associated with the commutator $ABA^{-1}B^{-1}$, and the right hand side is twice the asserted anti-symmetric map. An application of Proposition \ref{Heischar} now completes the proof of the proposition.
\end{proof}

\smallskip

For determining the precise structure of $\mathcal{P}_{U}$, as well as of groups over $\mathbb{Z}$ below, we shall need coordinates for this group, which we define by what will essentially become an embedding of the Levi quotient from Lemma \ref{unipker} as a subgroup of $\mathcal{P}_{U}$. Such an embedding corresponds to the intersection with a complementary parabolic group of the same type, or equivalently by taking a complement $\tilde{U}$ for $U^{\perp}$ in $V$. Note that if $\tilde{U}$ is isotropic then this gives us a presentation of the Levi subgroup as the intersection of $\mathcal{P}_{U}$ with the parabolic subgroup $\mathcal{P}_{\tilde{U}}$. However, with the applications for working over $\mathbb{Z}$ in mind, where the (dual) lattice need not necessarily contain appropriate isotropic complements, we shall not assume that $\tilde{U}$ is isotropic.

Now, the fact that $\tilde{U}$ is a complement for $U^{\perp}$ in $V$ implies that each vector in $V$ can be altered by a unique element of $\tilde{U}$ to land in $U^{\perp}$, which via Equation \eqref{isobil} and the definition of $W$ yields an isomorphism
\begin{equation}
\tilde{U} \cong V/U^{\perp} \cong U^{*} \quad\mathrm{and\ hence}\quad U\cong\tilde{U}^{*},\quad\mathrm{as\ well\ as}\quad\tilde{W}:=(U+\tilde{U})^{\perp} \cong W. \label{Wtilde}
\end{equation}
Composition with the isomorphisms from Equation \eqref{Wtilde} gives identifications \[\mathrm{Hom}_{\mathbb{F}}(W,U)\stackrel{\sim}{\to}\mathrm{Hom}_{\mathbb{F}}(\tilde{W},U),\ \mathrm{O}(W)\stackrel{\sim}{\to}\mathrm{O}(\tilde{W}),\ \mathrm{Hom}_{\mathbb{F}}(U^{*},U)\stackrel{\sim}{\to}\mathrm{Hom}_{\mathbb{F}}(\tilde{U},U),\] \[\mathrm{Hom}_{\mathbb{F}}(U^{*},W)\stackrel{\sim}{\to}\mathrm{Hom}_{\mathbb{F}}(\tilde{U},\tilde{W}),\quad\mathrm{and}\quad\mathrm{End}_{\mathbb{F}}(U^{*})\stackrel{\sim}{\to}\mathrm{End}_{\mathbb{F}}(\tilde{U}),\] which we denote by
\begin{equation}
\psi\mapsto\tilde{\psi},\quad\gamma\mapsto\tilde{\gamma},\quad\eta\mapsto\tilde{\eta},\quad\psi^{*}\mapsto\widetilde{\psi^{*}},\quad\mathrm{and}\quad M^{*}\mapsto\widetilde{M^{*}} \label{tildeiso}
\end{equation}
respectively, for elements $\psi\in\mathrm{Hom}_{\mathbb{F}}(W,U)$, $\gamma\in\mathrm{O}(W)$, $\eta\in\mathrm{Hom}_{\mathbb{F}}(U^{*},U)$, and $M\in\mathrm{End}_{\mathbb{F}}(U)$, with $\psi^{*}\in\mathrm{Hom}_{\mathbb{F}}(U^{*},W)$ and $M^{*}\in\mathrm{End}_{\mathbb{F}}(U^{*})$. It is clear that $\eta$ is symmetric or anti-symmetric if and only if $\tilde{\eta}$ has that property.

The fact that $\tilde{U}$ may be non-isotropic requires us to introduce the following canonical correction homomorphism.
\begin{lem}
There exists a unique symmetric map $\alpha:\tilde{U} \to U$ such that the space $\{\tilde{u}-\alpha\tilde{u}|\;\tilde{u}\in\tilde{U}\}$ is isotropic. \label{nonisocor}
\end{lem}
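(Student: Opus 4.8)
The plan is to convert ``$\{\tilde{u}-\alpha\tilde{u}\mid\tilde{u}\in\tilde{U}\}$ is isotropic'' into a single bilinear identity that pins down $\alpha$ via the perfect pairing between $\tilde{U}$ and $U$. First I would record that the restriction of $(\cdot,\cdot)$ to $\tilde{U}\times U$ is a perfect pairing: an element of $\tilde{U}$ orthogonal to all of $U$ lies in $U^{\perp}\cap\tilde{U}=0$, while an element of $U$ orthogonal to all of $\tilde{U}$ is orthogonal to all of $V=U^{\perp}\oplus\tilde{U}$ (since $U\subseteq U^{\perp}$), hence vanishes by non-degeneracy. This is the pairing underlying the identification $U\cong\tilde{U}^{*}$ from Equation \eqref{Wtilde}, and under it a map $\alpha:\tilde{U}\to U$ is symmetric precisely when the form $(\tilde{u},\tilde{v})\mapsto(\tilde{u},\alpha\tilde{v})$ on $\tilde{U}$ is symmetric.

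Next I would expand, for $\tilde{u}$ and $\tilde{v}$ in $\tilde{U}$,
\[(\tilde{u}-\alpha\tilde{u},\tilde{v}-\alpha\tilde{v})=(\tilde{u},\tilde{v})-(\tilde{u},\alpha\tilde{v})-(\tilde{v},\alpha\tilde{u})+(\alpha\tilde{u},\alpha\tilde{v}),\]
and note that the last term vanishes since $U$ is isotropic. Because $U\cap\tilde{U}\subseteq U^{\perp}\cap\tilde{U}=0$, the set in question is a genuine subspace of $V$, isomorphic to $\tilde{U}$; and by the displayed identity it is isotropic if and only if $(\tilde{u},\alpha\tilde{v})+(\tilde{v},\alpha\tilde{u})=(\tilde{u},\tilde{v})$ for all $\tilde{u},\tilde{v}\in\tilde{U}$. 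When $\alpha$ is symmetric the left hand side equals $2(\tilde{u},\alpha\tilde{v})$, so the condition collapses to $(\tilde{u},\alpha\tilde{v})=\frac{(\tilde{u},\tilde{v})}{2}$ for all $\tilde{u}\in\tilde{U}$. Since $\mathrm{char}\,\mathbb{F}\neq2$ and the pairing $\tilde{U}\times U\to\mathbb{F}$ is perfect, this determines a unique value $\alpha\tilde{v}\in U$, and $\tilde{v}\mapsto\alpha\tilde{v}$ is linear (it is the composite of $\tilde{v}\mapsto\frac{1}{2}(\cdot,\tilde{v})\in\tilde{U}^{*}$ with the inverse of $U\cong\tilde{U}^{*}$). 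The defining relation is symmetric in $\tilde{u}$ and $\tilde{v}$, so this $\alpha$ is symmetric, which gives existence; and since every symmetric $\alpha$ with the isotropy property must satisfy the same relation, it gives uniqueness.

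I do not foresee a genuine obstacle here: the only points needing care are to keep the logical order straight --- impose symmetry of $\alpha$ first, which turns the isotropy condition into the linear equation $2(\tilde{u},\alpha\tilde{v})=(\tilde{u},\tilde{v})$, and only then invoke the perfect pairing to solve it --- and to observe that dividing by $2$ is legitimate exactly because the characteristic is not $2$, consistent with the remarks in the introduction about the factor $\frac{1}{2}$. One could also note that the full set of maps $\tilde{U}\to U$ making this space isotropic is a coset of $\mathrm{Hom}_{\mathbb{F}}^{as}(\tilde{U},U)$, of which exactly one representative is symmetric, but this is not needed for the statement.
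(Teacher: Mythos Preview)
Your argument is correct and essentially identical to the paper's: both expand $(\tilde{u}-\alpha\tilde{u},\tilde{v}-\alpha\tilde{v})$, use the isotropy of $U$ to drop the $(\alpha\tilde{u},\alpha\tilde{v})$ term, reduce to the equation $(\alpha\tilde{u},\tilde{v})+(\tilde{u},\alpha\tilde{v})=(\tilde{u},\tilde{v})$, and then solve via the perfect pairing $\tilde{U}\cong U^{*}$ and $\mathrm{char}\,\mathbb{F}\neq2$ to obtain the unique symmetric $\alpha$ with $2(\alpha\tilde{u},\tilde{v})=(\tilde{u},\tilde{v})$. Your closing remark that the full solution set is the coset $\alpha+\mathrm{Hom}_{\mathbb{F}}^{as}(\tilde{U},U)$ is exactly how the paper phrases uniqueness.
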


\begin{proof}
Since $U$ is isotropic, we find that \[(\tilde{u}-\beta\tilde{u},\tilde{v}-\beta\tilde{v})=(\tilde{u},\tilde{v})-(\beta\tilde{u},\tilde{v})-(\tilde{u},\beta\tilde{v})\quad\mathrm{for\ every}\quad\beta:\tilde{U} \to U.\] Hence we are interested in homomorphisms $\beta\in\mathrm{Hom}_{\mathbb{F}}(\tilde{U},U)$ that satisfy \[(\beta\tilde{u},\tilde{v})+(\tilde{u},\beta\tilde{v})=(\tilde{u},\tilde{v})\quad\mathrm{for\ every\ }\tilde{u}\mathrm{\ and\ }\tilde{v}\mathrm{\ in\ }\tilde{U}.\] Now, the isomorphism $U\cong\tilde{U}^{*}$ from Equation \eqref{Wtilde} and the fact that $\mathbb{F}$ is not of characteristic 2 produce a map \[\alpha:\tilde{U} \to U\quad\mathrm{such\ that}\quad(\tilde{u},\tilde{v})=2(\alpha\tilde{u},\tilde{v})\quad\mathrm{for\ }\tilde{u}\mathrm{\ and\ }\tilde{v}\mathrm{\ in\ }\tilde{U},\] and it is clear that $\alpha$ is symmetric and satisfies the desired equality. Finally, if $\beta:\tilde{U} \to U$ also satisfies this equality then we get \[\beta=\alpha+(\beta-\alpha)\quad\mathrm{as\ well\ as}\quad\big((\beta-\alpha)\tilde{u},\tilde{v}\big)+\big(\tilde{u},(\beta-\alpha)\tilde{v}\big)=0\quad\mathrm{for\ every\ }\tilde{u}\mathrm{\ and\ }\tilde{v},\] so that $\beta-\alpha$ is anti-symmetric. Hence $\beta$ is symmetric if and only if $\beta=\alpha$. This proves the lemma.
\end{proof}
The stabilizer of $U\oplus\tilde{U}$ in $\mathcal{P}_{U}$ (which stabilizes also $\tilde{W}$ from Equation \eqref{Wtilde}) contains a unique Levi subgroup of that parabolic group, which is the intersection of $\mathcal{P}_{U}$ with the parabolic group associated with the isotropic subspace from Lemma \ref{nonisocor}. The choice of $\tilde{U}$ also allows us to investigate $\mathcal{W}_{U}$ in more detail, which establishes the following result.
\begin{prop}
The group $\mathcal{W}_{U}$ is the full Heisenberg group from Proposition \ref{unipstruc}, and the map from Lemma \ref{unipker} is surjective and splits. In particular, $\mathcal{P}_{U}$ is isomorphic to the semi-direct product in which $\mathrm{GL}(U)\times\mathrm{O}(W)$ acts on the Heisenberg group $\mathcal{W}_{U}$ via $(M,\gamma):(\psi,\eta)\mapsto(M\psi\gamma^{-1},M\eta M^{*})$. \label{pargrpgen}
\end{prop}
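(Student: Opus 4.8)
The plan is to fix the complement $\tilde{U}$ and build everything by hand on the decomposition $V=U\oplus\tilde{W}\oplus\tilde{U}$, where $\tilde{W}=(U+\tilde{U})^{\perp}$ as in Equation \eqref{Wtilde}; the facts I would use repeatedly are that $U$ is isotropic, that $\tilde{W}$ is orthogonal to both $U$ and $\tilde{U}$, that $U$ and $\tilde{U}$ are put in perfect duality by the form so that $U\cong\tilde{U}^{*}$, and that the possible non-isotropy of $\tilde{U}$ is encoded by the symmetric map $\alpha$ of Lemma \ref{nonisocor} through $(\tilde{u},\tilde{v})=2(\alpha\tilde{u},\tilde{v})$. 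I would first show that the homomorphism $\mathcal{W}_{U}\to\mathrm{Hom}_{\mathbb{F}}(W,U)$ from the proof of Proposition \ref{unipstruc} is surjective; by that proposition, which already embeds $\mathcal{W}_{U}$ into $H\big(\mathrm{Hom}_{\mathbb{F}}(W,U),\mathrm{Hom}_{\mathbb{F}}^{as}(U^{*},U)\big)$, identifies the kernel with the central subgroup $\mathrm{Hom}_{\mathbb{F}}^{as}(U^{*},U)$, and places that center inside $\mathcal{W}_{U}$, surjectivity forces $\mathcal{W}_{U}$ to be the whole Heisenberg group. Given $\psi\in\mathrm{Hom}_{\mathbb{F}}(W,U)$ I define $A_{\psi}\in\mathrm{GL}(V)$ to fix $U$, to send $z\in\tilde{W}$ to $z-\tilde{\psi}z$, and to send $\tilde{u}\in\tilde{U}$ to $\tilde{u}+\widetilde{\psi^{*}}\tilde{u}+c\tilde{u}$ for a map $c:\tilde{U}\to U$ to be determined (notation of \eqref{tildeiso}). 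Using the orthogonality relations, the defining property of $\psi^{*}$ from \eqref{psidual}, and the isotropy of $U$, the condition $A_{\psi}\in\mathrm{O}(V)$ collapses to the single identity $(c\tilde{u},\tilde{v})+(\tilde{u},c\tilde{v})=-(\widetilde{\psi^{*}}\tilde{u},\widetilde{\psi^{*}}\tilde{v})$, whose right hand side is symmetric, so since $U\cong\tilde{U}^{*}$ and $\mathrm{char}\,\mathbb{F}\neq2$ there is a unique symmetric solution $c=c_{\psi}$ (quadratic in $\psi$, and not involving $\alpha$). Then $A_{\psi}$ fixes $U$ and acts trivially on $U^{\perp}/U$, hence lies in $\mathcal{W}_{U}$ and maps to $\psi$; I keep the section $\psi\mapsto A_{\psi}$ for later.

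Next I would construct the Levi splitting. Given $(M,\gamma)\in\mathrm{GL}(U)\times\mathrm{O}(W)$, let $t(M,\gamma)$ be $M$ on $U$, $\tilde{\gamma}$ on $\tilde{W}$, and $\widetilde{M^{-*}}+d$ on $\tilde{U}$ for a correction $d:\tilde{U}\to U$. As before the isometry conditions reduce to a single symmetric equation, which this time does involve the $(\tilde{U},\tilde{U})$-block of the form and hence $\alpha$, and has a unique solution $d=d_{M,\gamma}$; equivalently, and more transparently, one defines $t(M,\gamma)$ as the map that is block-diagonal for the decomposition $U\oplus\tilde{W}\oplus\tilde{U}_{0}$ with $\tilde{U}_{0}=\{\tilde{u}-\alpha\tilde{u}\}$ the isotropic complement of Lemma \ref{nonisocor}, whose image is then exactly the Levi subgroup $\mathcal{P}_{U}\cap\mathcal{P}_{\tilde{U}_{0}}$, and rewrites it in the $\tilde{U}$-coordinates. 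A routine verification shows $t$ is a group homomorphism into $\mathcal{P}_{U}$ and is a section of the map from Lemma \ref{unipker}; consequently that map is onto, its image meets $\mathcal{W}_{U}=\ker$ trivially, and every $A\in\mathcal{P}_{U}$ is uniquely $A_{0}\cdot t(M,\gamma)$ with $(M,\gamma)$ the Levi image of $A$ and $A_{0}\in\mathcal{W}_{U}$, which is the asserted semidirect product.

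It remains to identify the conjugation action of $t(M,\gamma)$ on $\mathcal{W}_{U}$. For $\eta\in\mathrm{Hom}_{\mathbb{F}}^{as}(U^{*},U)$, realized as $v\mapsto v-\eta v^{*}$, using that $t(M,\gamma)$ acts on $V/U^{\perp}\cong U^{*}$ as $M^{-*}$ and on $U$ as $M$ one computes directly that $t(M,\gamma)(v\mapsto v-\eta v^{*})t(M,\gamma)^{-1}$ is $v\mapsto v-M\eta M^{*}v^{*}$, so the center transforms by $\eta\mapsto M\eta M^{*}$. For $\psi\in\mathrm{Hom}_{\mathbb{F}}(W,U)$, conjugating $A_{\psi}$ and using that $t(M,\gamma)$ preserves $U^{\perp}$ and acts on $U^{\perp}/U=W$ as $\gamma$ shows that $t(M,\gamma)A_{\psi}t(M,\gamma)^{-1}$ agrees with $A_{M\psi\gamma^{-1}}$ on all of $U^{\perp}$; since it also has image $M\psi\gamma^{-1}$ in $\mathrm{Hom}_{\mathbb{F}}(W,U)$, it differs from $A_{M\psi\gamma^{-1}}$ only by a central element, and one checks this element is trivial (by evaluating both sides on $\tilde{U}$, or by the uniqueness of the symmetric correction $c$), so that the section $\psi\mapsto A_{\psi}$ is $t$-equivariant and the induced action on $\mathcal{W}_{U}$ in these Heisenberg coordinates is exactly $(\psi,\eta)\mapsto(M\psi\gamma^{-1},M\eta M^{*})$. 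I expect the main obstacle to be exactly this last point: the non-isotropy of $\tilde{U}$ forces the corrections $c_{\psi}$ and $d_{M,\gamma}$ into the construction, and one must verify both that $\alpha$ cancels out of the final conjugation formula and that the chosen identification of $\mathcal{W}_{U}$ with its Heisenberg model and the Levi splitting are mutually compatible, so that no residual central twist is left.
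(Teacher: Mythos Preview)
Your proposal is correct and follows essentially the same route as the paper: both construct the lift $A_{\psi}$ over $\psi$ by solving for the symmetric correction on $\tilde{U}$ (the paper writes it as $\xi=\frac{\psi\psi^{*}}{2}$ plus an anti-symmetric part), both build the Levi section $t(M,\gamma)$ using the isotropic complement of Lemma \ref{nonisocor} and then re-express it in $\tilde{U}$-coordinates, and both conclude the semi-direct product structure and the conjugation formula from there. If anything, you are more explicit than the paper about the last step---the paper simply says that a direct evaluation, using $\gamma\in\mathrm{O}(W)$, yields the asserted action---whereas you spell out the separate checks on the center and on the section $\psi\mapsto A_{\psi}$; your anticipated obstacle (that the $\alpha$-corrections must cancel in the conjugation) is exactly the content hidden in the paper's phrase ``direct evaluation''.
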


\begin{proof}
Consider an element $A\in\mathcal{W}_{U}$, lying over to the map $\psi\in\mathrm{Hom}_{\mathbb{F}}(W,U)$ as in Proposition \ref{unipstruc}, with the associated map $\tilde{\psi}\in\mathrm{Hom}_{\mathbb{F}}(\tilde{W},U)$ from Equation \eqref{tildeiso}. The proof of that proposition shows that \[A\big|_{U}=\mathrm{Id}_{U},\quad\mathrm{that\ for}\quad w\in\tilde{W} \cong W \quad\mathrm{we\ have}\quad Aw=w-\tilde{\psi}w,\] and that there is a map $\xi\in\mathrm{Hom}_{\mathbb{F}}(U^{*},U)$, which comes with the associated map $\tilde{\xi}\in\mathrm{Hom}_{\mathbb{F}}(\tilde{U},U)$ as in Equation \eqref{tildeiso}, such that \[A\tilde{u}=\tilde{u}+\widetilde{\psi^{*}}\tilde{u}-\tilde{\xi}\tilde{u}\quad\mathrm{for\ any\ }\tilde{u}\in\tilde{U},\quad\mathrm{with}\quad\widetilde{\psi^{*}}\tilde{u}\in\tilde{W}\quad\mathrm{and}\quad\tilde{\xi}\tilde{u} \in U.\] Recalling that $\tilde{W}$ is perpendicular to both $U$ and $\tilde{U}$ and that $U$ is isotropic, the fact that $A$ is an isometry implies that
\[0=(\tilde{u},\tilde{v})-(A\tilde{u},A\tilde{v})=(\tilde{\xi}\tilde{u},\tilde{v})+(\tilde{u},\tilde{\xi}\tilde{v}\big)-\big(\widetilde{\psi^{*}}\tilde{u},\widetilde{\psi^{*}}\tilde{v}\big)\quad\mathrm{for\ }\tilde{u}\mathrm{\ and\ }\tilde{v}\mathrm{\ in\ }\tilde{U},\] so that after applying the isomorphism $\tilde{U} \cong U^{*}$ from Equation \eqref{Wtilde} and inverting the maps from Equation \eqref{tildeiso} we get \[(\xi u^{*},v^{*})+(u^{*},\xi v^{*}\big)=(\psi^{*}u^{*},\psi^{*}v^{*})=(\psi\psi^{*}u^{*},v^{*})\quad\mathrm{for\ every\ }u^{*}\mathrm{\ and\ }v^{*}\mathrm{\ in\ }U^{*}.\] It follows that $\xi$ is the sum of the symmetric map $\frac{\psi\psi^{*}}{2}$ and an anti-symmetric map from $U^{*}$ to $U$. On the other hand, the same calculation shows that given any $\psi\in\mathrm{Hom}_{\mathbb{F}}(W,U)$, one can take $\xi$ to be $\frac{\psi\psi^{*}}{2}$ plus an arbitrary element of $\mathrm{Hom}_{\mathbb{F}}^{as}(U^{*},U)$, and the map $A$ defined by these formulae will be in $\mathrm{O}(V)$, hence in $\mathcal{P}_{U}$ and in $\mathcal{W}_{U}$, and will lie over $\psi$. This proves the first assertion.

For the second one, choose a pair of element $M\in\mathrm{GL}(U)$ and $\gamma\in\mathrm{O}(W)$, and we wish to construct an element $A\in\mathcal{P}_{U}$, that stabilizes $U\oplus\tilde{U}$ and $\tilde{W}$, and whose image in $\mathrm{GL}(U)\times\in\mathrm{O}(W)$ is $(M,\gamma)$. It is clear that we must take $A\big|_{U}=M$ and $A\big|_{\tilde{W}}\gamma$, and using the map $\alpha$ from Lemma \ref{nonisocor} we set \[A\tilde{u}=\widetilde{M^{-*}}\tilde{u}+M\alpha\tilde{u}-\alpha\widetilde{M^{-*}}\tilde{u} \in U\oplus\tilde{U}\quad\mathrm{for}\quad\tilde{u}\in\tilde{U},\quad\mathrm{where}\quad\widetilde{M^{-*}}=(\widetilde{M^{*}})^{-1}\] (i.e., $A$ takes the element $\tilde{u}-\alpha\tilde{u}$ of the isotropic space from that lemma to its natural $\widetilde{M^{-*}}$-image $\widetilde{M^{-*}}\tilde{u}-\alpha\widetilde{M^{-*}}\tilde{u}$ in that space, to which we must add the image $A\alpha\tilde{u}=M\alpha\tilde{u}$ of $\alpha\tilde{u} \in U$ under $A$). The perpendicularity of $\tilde{W}$ and $U\oplus\tilde{U}$ from Equation \eqref{Wtilde}, the orthogonality of $\gamma$, and the isotropy of $U$ easily imply that for verifying that the element $A$ thus defined lies in $\mathrm{O}(V)$, only the pairings of the form $(A\tilde{u},A\tilde{v})$ for $\tilde{u}$ and $\tilde{v}$ in $\tilde{U}$ have to be checked. But these observations combine with the proof of Lemma \ref{nonisocor} and the isomorphism between $\tilde{U}$ and $U^{*}$ given in Equation \eqref{Wtilde} to show that this pairing is the sum of \[\big(\widetilde{M^{-*}}\tilde{u},\widetilde{M^{-*}}\tilde{v}\big),\ \ \big(\widetilde{M^{-*}}\tilde{u},M\alpha\tilde{v}\big)=\tfrac{(\tilde{u},\tilde{v})}{2},\ \ -\big(\widetilde{M^{-*}}\tilde{u},\alpha\widetilde{M^{-*}}\tilde{v}\big)=-\tfrac{(\widetilde{M^{-*}}\tilde{u},\widetilde{M^{-*}}\tilde{v})}{2},\] \[\big(M\alpha\tilde{u},\widetilde{M^{-*}}\tilde{v}\big)=\tfrac{(\tilde{u},\tilde{v})}{2},\quad\mathrm{and}\quad -\big(\alpha\widetilde{M^{-*}}\tilde{u},\widetilde{M^{-*}}\tilde{v}\big)=-\tfrac{(\widetilde{M^{-*}}\tilde{u},\widetilde{M^{-*}}\tilde{v})}{2},\] which equals $(\tilde{u},\tilde{v})$ as desired. It is now immediate that $A$ lies in $\mathcal{P}_{U}$, stabilizes $U\oplus\tilde{U}$ and $\tilde{W}$, and projects to $(M,\gamma)$. The map sending $(M,\gamma)$ to $A$ thus defined is fairly easily verified to be an (injective) homomorphism of groups, whose image is the required Levi subgroup. This determines $\mathcal{P}_{U}$ as a semi-direct product, and a direct evaluation of the action by conjugation gives, using the fact that $\gamma\in\mathrm{O}(W)$, the asserted formula. This proves the proposition.
\end{proof}

It follows from Proposition \ref{pargrpgen} that the choice of $\tilde{U}$ defines a bijection of sets \[A\in\mathcal{P}_{U}\longleftrightarrow(M,\gamma,\psi,\eta)\in\mathrm{GL}(U)\times\mathrm{O}(W)\times\mathrm{Hom}_{\mathbb{F}}(W,U)\times\mathrm{Hom}_{\mathbb{F}}^{as}(U^{*},U),\] where given $u \in U$, $w\in\tilde{W}$, and $\tilde{u}\in\tilde{U}$, the action of the element $A$ on the left hand side is given by \[Au=Mu \in U,\qquad Aw=(\tilde{\gamma}w)-(\tilde{\psi}\tilde{\gamma}w)\in\tilde{W} \oplus U,\] and
\begin{equation}
A\tilde{u}=(\widetilde{M^{-*}}\tilde{u})+(\widetilde{\psi^{*}}\widetilde{M^{-*}}\tilde{u})+
\Big(M\alpha\tilde{u}-\alpha\widetilde{M^{-*}}\tilde{u}-\tfrac{\tilde{\psi}\widetilde{\psi^{*}}\widetilde{M^{-*}}\tilde{u}}{2}-\tilde{\eta}\widetilde{M^{-*}}\tilde{u}\Big)\in\tilde{U}\oplus\tilde{W} \oplus U. \label{PUgenform}
\end{equation}
Note that we have used the convention in which the element of the Levi subgroup from the proof of Proposition \ref{pargrpgen} operates first, and the one from $\mathcal{W}_{U}$ acts later. In the convention with the opposite order we have to replace $\psi$ and $\eta$ in Equation \eqref{PUgenform} (and in the formula for $Aw$) by their images under the action of the Levi element, which is described in that proposition.

Considering Equation \eqref{PUgenform} in the natural quotients arising from $U$ yields the following consequence.
\begin{cor}
The action of $\mathcal{P}_{U}$ on the space $V/U^{\perp}$, which is isomorphic to $U^{*}$ via Equation \eqref{Wtilde}, is via the dual of the $GL(U)$-part of the quotient $\mathcal{P}_{U}/\mathcal{W}_{U}$ from Lemma \ref{unipker} (or of the Levi subgroup). Its action on $V/U$ is via the semi-direct product in which $\mathrm{GL}(U)\times\mathrm{O}(W)$ acts on $\mathrm{Hom}_{\mathbb{F}}(W,U)$ via $(M,\gamma):\psi \mapsto M\psi\gamma^{-1}$. \label{actquot}
\end{cor}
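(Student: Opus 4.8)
The plan is to derive both claims directly from the explicit description \eqref{PUgenform} of a general element $A\in\mathcal{P}_{U}$, by reducing that formula modulo $U^{\perp}$ and modulo $U$ in turn; throughout I use the decomposition $V=U\oplus\tilde{W}\oplus\tilde{U}$ provided by the chosen complement $\tilde{U}$, so that $U^{\perp}=U\oplus\tilde{W}$ by Equation \eqref{Wtilde}, together with the identifications $\tilde{U}\cong U^{*}$ and $\tilde{W}\cong W$ from that equation.

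First I would treat $V/U^{\perp}$. The projection $V\to V/U^{\perp}$ restricts to an isomorphism $\tilde{U}\stackrel{\sim}{\to}V/U^{\perp}$, which together with $\tilde{U}\cong U^{*}$ is the identification in the statement. Reading off Equation \eqref{PUgenform} (and using $Au=Mu\in U$ and $Aw\in\tilde{W}\oplus U$ for $w\in\tilde{W}$), the only summand of $A\tilde{u}$ not already lying in $U^{\perp}=U\oplus\tilde{W}$ is $\widetilde{M^{-*}}\tilde{u}\in\tilde{U}$; hence $A$ acts on $V/U^{\perp}\cong\tilde{U}\cong U^{*}$ by $\widetilde{M^{-*}}$, that is, by $M^{-*}=(M^{*})^{-1}$. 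This depends only on the $\mathrm{GL}(U)$-component $M$ of the Levi quotient of Lemma \ref{unipker}, and it is its dual (contragredient) representation, exactly as already observed right after Lemma \ref{unipker}.

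Then I would treat $V/U$, where $V\to V/U$ restricts to an isomorphism $\tilde{W}\oplus\tilde{U}\stackrel{\sim}{\to}V/U$. Reducing Equation \eqref{PUgenform} modulo $U$ --- both $\tilde{\psi}\tilde{\gamma}w$ and the whole parenthesized third summand of $A\tilde{u}$ lie in $U$ --- yields $A\colon w\mapsto\tilde{\gamma}w$ on $\tilde{W}$ and $A\colon\tilde{u}\mapsto\widetilde{M^{-*}}\tilde{u}+\widetilde{\psi^{*}}\widetilde{M^{-*}}\tilde{u}$ on $\tilde{U}$. In particular the coordinate $\eta\in\mathrm{Hom}_{\mathbb{F}}^{as}(U^{*},U)$ drops out; more structurally, an element of $\mathcal{W}_{U}$ lying over $0\in\mathrm{Hom}_{\mathbb{F}}(W,U)$ acts on $V$ by $v\mapsto v-\eta v^{*}$ with $\eta v^{*}\in U$ (proof of Proposition \ref{unipstruc}), hence trivially on $V/U$, so the action of $\mathcal{P}_{U}$ on $V/U$ factors through $\overline{\mathcal{P}}_{U}=\mathcal{P}_{U}/\mathrm{Hom}_{\mathbb{F}}^{as}(U^{*},U)$. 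It then remains to recognize the resulting action of $\overline{\mathcal{P}}_{U}$ as the asserted semi-direct product: by Proposition \ref{pargrpgen}, $\overline{\mathcal{P}}_{U}$ is the semi-direct product of $\mathrm{GL}(U)\times\mathrm{O}(W)$ with $\mathrm{Hom}_{\mathbb{F}}(W,U)$ whose conjugation action is precisely the first component $\psi\mapsto M\psi\gamma^{-1}$ of the Heisenberg action given there, and the formulae just displayed exhibit exactly this group acting on $V/U\cong\tilde{W}\oplus\tilde{U}$ --- the Levi part acting by $\tilde{\gamma}$ on $\tilde{W}$ and by $\widetilde{M^{-*}}$ on $\tilde{U}$, and $\psi$ contributing the unipotent off-diagonal map $\widetilde{\psi^{*}}\colon\tilde{U}\to\tilde{W}$ --- and one reads off in passing that this action is faithful, with kernel in $\mathcal{P}_{U}$ equal to $\mathrm{Hom}_{\mathbb{F}}^{as}(U^{*},U)$. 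I expect no genuine obstacle here; the only care required is the routine bookkeeping of the tilde-identifications of Equation \eqref{tildeiso} and of the convention --- fixed just before Equation \eqref{PUgenform} --- that the Levi element acts before the unipotent one.
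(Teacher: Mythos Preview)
Your proposal is correct and follows exactly the approach intended by the paper: the corollary is stated there as an immediate consequence of Equation \eqref{PUgenform}, with no separate proof given, and you carry out precisely the reduction of that formula modulo $U^{\perp}$ and modulo $U$ that the text indicates. The bookkeeping with the tilde-identifications and the Levi-first convention is handled correctly.
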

In particular, the kernel of the two actions of $\mathcal{P}_{U}$ appearing in Corollary \ref{actquot} are the semi-direct product in which $\mathrm{O}(W)$ acts on $\mathcal{W}_{U}$ and the part $\mathrm{Hom}_{\mathbb{F}}^{as}(U^{*},U)$ of $\mathcal{W}_{U}$ respectively. We shall henceforth denote the finer quotient, namely $\mathcal{P}_{U}/\mathrm{Hom}_{\mathbb{F}}^{as}(U^{*},U)$, by $\overline{\mathcal{P}}_{U}$. The stabilizer of the direct sum $U\oplus\tilde{U}$ is the full group in the coarser quotient, and coincides with the image of the Levi subgroup in the finer one.

\smallskip

It is standard to show that when $\dim U\geq2$ the anti-symmetric map from Proposition \ref{unipstruc} is non-degenerate. On the other hand, if $\dim U=1$ then it is very degenerate, since $\mathrm{Hom}_{\mathbb{F}}^{as}(U^{*},U)=0$ in this case (there are no anti-symmetric bilinear forms on a 1-dimensional space), and we then have $\overline{\mathcal{P}}_{U}=\mathcal{P}_{U}$. Moreover, since in this case a choice of generator $z$ for $U$ and Equation \eqref{isobil} yield isomorphisms
\begin{equation}
\mathrm{Hom}_{\mathbb{F}}(W,U) \cong W^{*} \cong W,\quad\mathrm{and}\quad\mathrm{GL}(U)=\mathbb{F}^{\times}\mathrm{\ by\ definition\ when\ }\dim U=1, \label{isodim1}
\end{equation}
this special case of Proposition \ref{pargrpgen} takes the following form.
\begin{cor}
The stabilizer of a 1-dimensional isotropic subspace $U$ of $V$ is the semi-direct product in which the product $\mathbb{F}^{\times}\times\mathrm{O}(W)$ operates on the additive group of $W$ via $(c,\gamma):w \to c\gamma w$. It operates faithfully on the finer quotient $V/U$ from Corollary \ref{actquot}, while the action on the 1-dimensional coarser quotient $V/U^{\perp}$ is only via the scalar part $\mathbb{F}^{\times}$. \label{stdim1}
\end{cor}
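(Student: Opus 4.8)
The plan is to specialize Proposition \ref{pargrpgen} and Corollary \ref{actquot} to the case $\dim U=1$, where the structure collapses because a $1$-dimensional space carries no non-zero anti-symmetric bilinear form. First I would record that $\mathrm{Hom}_{\mathbb{F}}^{as}(U^{*},U)=0$ in this case, so the Heisenberg group $\mathcal{W}_{U}$ of Proposition \ref{unipstruc} coincides with its quotient $\mathrm{Hom}_{\mathbb{F}}(W,U)$, an additive (abelian) group, and in particular $\overline{\mathcal{P}}_{U}=\mathcal{P}_{U}$, as already noted. Proposition \ref{pargrpgen} then presents $\mathcal{P}_{U}$ as the semi-direct product of $\mathrm{GL}(U)\times\mathrm{O}(W)$ with $\mathrm{Hom}_{\mathbb{F}}(W,U)$, in which $(M,\gamma)$ sends $\psi$ to $M\psi\gamma^{-1}$ (the $\eta$-coordinate having disappeared).

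Next I would transport this description through the isomorphisms of Equation \eqref{isodim1}. Fixing a generator $z$ of $U$ identifies $\mathrm{GL}(U)$ with $\mathbb{F}^{\times}$, an element $M$ corresponding to the scalar $c$ with $Mz=cz$; combined with the bilinear-form isomorphism $W^{*}\cong W$ of Equation \eqref{isobil}, this identifies $\mathrm{Hom}_{\mathbb{F}}(W,U)$ with $W$ via $\psi\mapsto w_{\psi}$, where $w_{\psi}$ is the unique vector with $\psi(v)=(v,w_{\psi})z$ for all $v\in W$. I would then compute $(M\psi\gamma^{-1})(v)=c\,\psi(\gamma^{-1}v)=c(\gamma^{-1}v,w_{\psi})z=c(v,\gamma w_{\psi})z$, the last equality using $\gamma\in\mathrm{O}(W)$, so that $w_{M\psi\gamma^{-1}}=c\gamma w_{\psi}$; this is exactly the claimed action $(c,\gamma):w\mapsto c\gamma w$, and the semi-direct product description follows at once.

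It then remains to read off the two quotient actions. Faithfulness on $V/U$ should be immediate from Corollary \ref{actquot} and the remark following it, since the kernel of that action is the subgroup $\mathrm{Hom}_{\mathbb{F}}^{as}(U^{*},U)$ of $\mathcal{W}_{U}$, which is $0$ here (alternatively, one checks directly that $M\psi\gamma^{-1}=\psi$ for all $\psi$ forces $M=\mathrm{Id}_{U}$ and $\gamma=\mathrm{Id}_{W}$). For $V/U^{\perp}$, Corollary \ref{actquot} gives that the action, under the identification $V/U^{\perp}\cong U^{*}$ of Equation \eqref{isobil}, is by $M^{-*}$, depending only on the $\mathrm{GL}(U)$-part; for $\dim U=1$ this is multiplication by $c^{-1}\in\mathbb{F}^{\times}$ on the $1$-dimensional space $U^{*}$, so the action factors through the scalar part $\mathbb{F}^{\times}$ alone while the $\mathrm{O}(W)$-factor and the unipotent part act trivially. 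The one place calling for genuine care is the computation in the middle paragraph --- making sure the $\gamma^{-1}$ in $M\psi\gamma^{-1}$ turns into a $\gamma$ after passing to $W$ --- which is just the adjoint identity $(\gamma^{-1}v,w)=(v,\gamma w)$ for the isometry $\gamma$; everything else is a direct specialization of the results already proved.
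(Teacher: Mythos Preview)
Your proposal is correct and follows exactly the approach the paper takes: the corollary is stated there without a separate proof, as an immediate specialization of Proposition~\ref{pargrpgen} and Corollary~\ref{actquot} via the isomorphisms in Equation~\eqref{isodim1}, together with the observation (recorded just before the statement) that $\mathrm{Hom}_{\mathbb{F}}^{as}(U^{*},U)=0$ when $\dim U=1$. Your write-up simply makes these specializations explicit, including the adjoint computation that turns $M\psi\gamma^{-1}$ into $c\gamma w_{\psi}$, and is if anything more detailed than what the paper provides.
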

Note that replacing the choice of the generator of $U$ that we used for identifying $\mathrm{Hom}_{\mathbb{F}}(W,U)$ with $W$ in Corollary \ref{stdim1} corresponds to a scalar rescaling of $W$, which does not affect the form of the semi-direct product there.

We also get a small simplification of Proposition \ref{pargrpgen} in case $\dim U=2$, since the space $\mathrm{Hom}_{\mathbb{F}}^{as}(U^{*},U)$, which is $\bigwedge^{2}U$ in the notation of \cite{[L]}, has dimension 1 in this case. Choosing a basis for $U$, hence the dual basis for $U^{*}$, yields a generator for $\mathrm{Hom}_{\mathbb{F}}^{as}(U^{*},U)$, and we get isomorphisms
\begin{equation}
\mathrm{GL}(U)\cong\mathrm{GL}_{2}(\mathbb{F}),\ \ \mathrm{Hom}_{\mathbb{F}}^{as}(U^{*},U)\cong\mathbb{F},\mathrm{\ and\ }\mathrm{Hom}_{\mathbb{F}}(W,U) \cong W^{*} \times W^{*} \cong W \times W. \label{isodim2}
\end{equation}
One verifies that the map from the latter space to $\mathrm{Hom}_{\mathbb{F}}^{as}(U^{*},U)\cong\mathbb{F}$ is the anti-symmetrization of the pairing on $W$, and as $\mathrm{GL}(U)$ has a natural action on $\mathrm{Hom}_{\mathbb{F}}^{as}(U^{*},U)=\bigwedge^{2}U=\det U$, the isomorphisms from Equation \eqref{isodim2} expresses Proposition \ref{pargrpgen} as follows.
\begin{cor}
When $\dim U=2$ the Heisenberg group from Proposition \ref{unipstruc} is isomorphic to the group $\tilde{H}(W,\mathbb{F})$ from Definition \ref{Heisdef}. In $\mathcal{P}_{U}$ it is acted upon by $\mathrm{GL}_{2}(\mathbb{F})\times\mathcal{O}(W)$, where $\mathrm{GL}_{2}(\mathbb{F})$ acts on the part $W \times W$ of $\tilde{H}(W,\mathbb{F})$ as on ($W$-valued) length 2 vectors, and on $\mathbb{F}$ via the determinant. \label{stdim2}
\end{cor}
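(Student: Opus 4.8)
The plan is to read this corollary off from Proposition \ref{pargrpgen} by substituting the three identifications recorded in Equation \eqref{isodim2}, all of which stem from a single choice of basis $(e_{1},e_{2})$ for $U$, with dual basis $(e_{1}^{*},e_{2}^{*})$ for $U^{*}$. Explicitly, I would identify $\mathrm{GL}(U)$ with $\mathrm{GL}_{2}(\mathbb{F})$ via matrices in this basis, identify $\mathrm{Hom}_{\mathbb{F}}(W,U)$ with $W\times W$ by sending $\psi$ to the unique pair $(z_{1},z_{2})$ with $\psi w=(z_{1},w)e_{1}+(z_{2},w)e_{2}$ (using $W\cong W^{*}$ from Equation \eqref{isobil}), and identify $\mathrm{Hom}_{\mathbb{F}}^{as}(U^{*},U)=\bigwedge^{2}U$ with $\mathbb{F}$ by the generator attached to the basis.

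The first task is to compute the anti-symmetric map of Proposition \ref{unipstruc} in these coordinates. For $\psi\leftrightarrow(z_{1},z_{2})$ one checks directly that $\psi^{*}e_{i}^{*}=z_{i}$, hence for $\varphi\leftrightarrow(w_{1},w_{2})$ the composite $\psi\varphi^{*}\colon U^{*}\to U$ is represented, in the chosen bases, by the matrix whose $(i,j)$ entry is $(z_{i},w_{j})$, and $\varphi\psi^{*}$ by the matrix with entry $(w_{i},z_{j})$. Subtracting and using the symmetry of the form on $W$, the map $\psi\varphi^{*}-\varphi\psi^{*}$ equals $\big((z_{1},w_{2})-(z_{2},w_{1})\big)$ times the standard anti-symmetric generator, so $(\psi,\varphi)\mapsto\frac{\psi\varphi^{*}-\varphi\psi^{*}}{2}$ becomes the anti-symmetrization $\big((z_{1},z_{2}),(w_{1},w_{2})\big)\mapsto(z_{1},w_{2})-(z_{2},w_{1})$ of the pairing on $W$, up to the scalar $\frac12$. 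That scalar merely rescales the one-dimensional central direction, and $H(X,Y)$ built from a bilinear form $B$ is isomorphic to the one built from $cB$ for any $c\in\mathbb{F}^{\times}$ via $(x,y)\mapsto(x,cy)$ (one may instead absorb the $\frac12$ into the chosen generator of $\bigwedge^{2}U$); either way, comparison with Definition \ref{Heisdef} with $Z=W$ and $Y=\mathbb{F}$ yields $\mathcal{W}_{U}\cong\tilde{H}(W,\mathbb{F})$.

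The second task is to translate the conjugation action $(M,\gamma)\colon(\psi,\eta)\mapsto(M\psi\gamma^{-1},M\eta M^{*})$ of Proposition \ref{pargrpgen}. From $M\psi\,w=(z_{1},w)Me_{1}+(z_{2},w)Me_{2}$ one gets that $M\psi$ corresponds to $M$ applied to the column vector $(z_{1},z_{2})^{t}$, so $\mathrm{GL}_{2}(\mathbb{F})$ indeed acts on the $W\times W$-part as on $W$-valued length $2$ vectors; and since $\gamma\in\mathrm{O}(W)$, precomposition with $\gamma^{-1}$ carries $(z_{1},z_{2})$ to $(\gamma z_{1},\gamma z_{2})$, so $\mathrm{O}(W)$ acts diagonally. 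For the center, writing $\mathrm{Hom}_{\mathbb{F}}(U^{*},U)\cong U\otimes U$ one identifies $\eta\mapsto M\eta M^{*}$ with the natural action of $M\otimes M$, which restricts on the line $\bigwedge^{2}U$ to multiplication by $\det M$; and as $\gamma$ does not enter this slot, $\mathrm{O}(W)$ acts trivially on $\mathbb{F}$. Collecting these statements gives exactly the action described in the corollary.

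I expect no genuine obstacle here: the statement is a straightforward specialization of Proposition \ref{pargrpgen}, and the only points needing care are keeping the dualities $W\cong W^{*}$ and $U\cong\tilde{U}^{*}$ consistent throughout the $2\times2$ matrix computation of the commutator form, and tracking the harmless scalar $\frac12$ (which disappears once one rescales the one-dimensional center, or picks the generator of $\bigwedge^{2}U$ accordingly). The identity $M\eta M^{*}=(\det M)\eta$ on $\bigwedge^{2}U$ is the one small input from multilinear algebra, and it is immediate once $\mathrm{Hom}_{\mathbb{F}}(U^{*},U)$ is viewed as $U\otimes U$ with $M$ acting as $M\otimes M$.
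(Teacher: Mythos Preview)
Your proposal is correct and follows exactly the approach the paper takes: the paper does not give a formal proof of this corollary, relying instead on the sentence immediately preceding it (``One verifies that the map\ldots is the anti-symmetrization of the pairing on $W$, and as $\mathrm{GL}(U)$ has a natural action on $\mathrm{Hom}_{\mathbb{F}}^{as}(U^{*},U)=\bigwedge^{2}U=\det U$\ldots''), and your argument is a careful unpacking of precisely those two verifications via the identifications of Equation \eqref{isodim2}. Your handling of the factor $\tfrac12$ is also in the spirit of the paper, which leaves the choice of generator of $\bigwedge^{2}U$ free.
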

We remark that in any dimension of $U$, a choice of a basis would yield isomorphisms of $\mathrm{GL}(U)$ with $\mathrm{GL}_{\dim U}(\mathbb{F})$ and of $\mathrm{Hom}_{\mathbb{F}}(W,U)$ with $(W^{*})^{\dim U}$ and therefore with $W^{\dim U}$ as in Equation \eqref{isodim2}, but since $\mathrm{Hom}_{\mathbb{F}}^{as}(U^{*},U)=\bigwedge^{2}U$ will no longer be 1-dimensional, it will only be useful for us in case $\dim U=2$.

\smallskip

There are two natural maps from $\mathrm{O}(V)$: One is the determinant to $\{\pm1\}$, and the other one is the spinor norm into $\mathbb{F}^{\times}/(\mathbb{F}^{\times})^{2}$. Their restrictions to $\mathcal{P}_{U}$ are evaluated as follows.
\begin{prop}
Both the determinant and the spinor norm of elements of $\mathcal{W}_{U}$ are trivial, hence these maps factor through the quotient $\mathrm{GL}(U)\times\mathrm{O}(W)$ from Lemma \ref{unipker}. If an element of $\mathcal{P}_{U}$ maps to a pair $(M,\gamma)$ in that product, then its determinant is just $\det\gamma$, and its spinor norm is the product of the image of $\det M$ in $\mathbb{F}^{\times}/(\mathbb{F}^{\times})^{2}$ with the spinor norm of $\gamma$. \label{detspnorm}
\end{prop}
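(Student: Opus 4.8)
The plan is to compute both maps separately, first on the unipotent radical $\mathcal{W}_U$ and then on a Levi subgroup, and finally to combine using the semi-direct product structure from Proposition \ref{pargrpgen}. The key structural input is that $\mathcal{W}_U$ is generated by the ``translation'' elements appearing in Equation \eqref{PUgenform}: those coming from $\psi\in\mathrm{Hom}_{\mathbb{F}}(W,U)$ and those coming from $\eta\in\mathrm{Hom}_{\mathbb{F}}^{as}(U^*,U)$. For the determinant, I would argue that $\mathcal{W}_U$ is a unipotent group (indeed a subgroup of a Heisenberg group, hence nilpotent, with every element acting unipotently on $V$ since $A|_U=\mathrm{Id}_U$, $A|_{\tilde W}$ induces $\mathrm{Id}_W$, and $A$ induces the identity on the successive quotients of the flag $U\subseteq U^\perp\subseteq V$); a unipotent automorphism has determinant $1$. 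Alternatively, and perhaps more in the spirit of the paper, one notes $\mathcal{W}_U$ is connected (it is a vector group as a variety, being the underlying set of a Heisenberg group) and the determinant is a continuous homomorphism to the discrete group $\{\pm1\}$, so it is trivial on $\mathcal{W}_U$. Either way the determinant factors through $\mathrm{GL}(U)\times\mathrm{O}(W)$.

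Next I would compute the determinant of a Levi element $A$ attached to a pair $(M,\gamma)$ via the splitting in the proof of Proposition \ref{pargrpgen}. Using the decomposition $V = U \oplus \tilde W \oplus \tilde U$ (as vector spaces; this is legitimate once $\tilde U$ is chosen) and the explicit formulas $A|_U=M$, $A|_{\tilde W}=\tilde\gamma$ up to a correction landing in $U$, and $A\tilde u = \widetilde{M^{-*}}\tilde u$ up to corrections landing in $U\oplus$ lower pieces, the matrix of $A$ in a basis adapted to this flag is block upper-triangular with diagonal blocks $M$ on $U$, $\tilde\gamma$ on $\tilde W$, and $\widetilde{M^{-*}}$ on $\tilde U$. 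Hence $\det A = \det M \cdot \det\tilde\gamma \cdot \det\widetilde{M^{-*}} = \det M \cdot \det\gamma \cdot (\det M)^{-1} = \det\gamma$, since $\det\widetilde{M^{-*}} = \det(M^*)^{-1} = (\det M)^{-1}$ and $\det\tilde\gamma=\det\gamma$ under the identification $\tilde W\cong W$ of Equation \eqref{Wtilde}. Since every element of $\mathcal{P}_U$ is uniquely a product of a $\mathcal{W}_U$-element and a Levi element, and the determinant is a homomorphism vanishing on the former, $\det$ of a general element is $\det\gamma$ as claimed.

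For the spinor norm the same two-step strategy works but requires a little more care. The spinor norm is trivial on $\mathcal{W}_U$: again by connectedness of $\mathcal{W}_U$ as a variety together with the fact that the spinor norm, suitably interpreted, is locally constant, or — to avoid topological arguments over a general field — by writing each generator of $\mathcal{W}_U$ explicitly as a product of an even number of reflections whose spinor norm contributions cancel. Concretely, the element attached to $\eta\in\mathrm{Hom}^{as}(U^*,U)$ is an ``Eichler transvection'' type map; such Siegel/Eichler elements are classically known (and easily checked directly) to have trivial spinor norm, and likewise the $\psi$-part. I would then compute the spinor norm of the Levi element $A=A_{(M,\gamma)}$: because $A$ respects the orthogonal decomposition $V = (U\oplus\tilde U)\perp \tilde W$ (the isotropic-plane correction $\alpha$ is arranged precisely so that $A$ preserves the hyperbolic part $U\oplus\tilde U$ and acts as $\gamma$ on $\tilde W$), its spinor norm is the product of the spinor norm of its restriction to the hyperbolic space $U\oplus\tilde U$ and that of $\gamma$ on $\tilde W\cong W$. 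On the hyperbolic part $U\oplus\tilde U$, $A$ acts as $M\oplus M^{-*}$ (again up to the $\alpha$-correction, which is a unipotent element of trivial spinor norm), and a standard computation on a hyperbolic space shows that the element $\mathrm{diag}(M,M^{-*})$ of $\mathrm{O}(U\oplus\tilde U)$ has spinor norm equal to the class of $\det M$ in $\mathbb{F}^\times/(\mathbb{F}^\times)^2$. Multiplying gives the asserted formula.

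The main obstacle is the spinor-norm computation on the hyperbolic part: one must pin down the normalization of the spinor norm and then verify that $\mathrm{diag}(M,M^{-*})$ has spinor norm $\det M$, which amounts to reducing $M$ to a product of ``elementary'' factors — elementary transvections (with trivial spinor norm, contributing $1$) and a single diagonal factor $\mathrm{diag}(\lambda,1,\dots,1)$ with $\lambda=\det M$, whose lift $\mathrm{diag}(\lambda,1,\dots,1)\oplus\mathrm{diag}(\lambda^{-1},1,\dots,1)$ on the hyperbolic plane is a product of two reflections with spinor norm $\lambda$. The triviality of the spinor norm on $\mathcal{W}_U$ and on the $\alpha$-correction is routine once one invokes that Eichler/Siegel transformations have trivial spinor norm (or, over fields where it is available, the connectedness argument), and the determinant half of the proposition is entirely straightforward.
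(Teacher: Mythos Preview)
Your proposal is correct and follows essentially the same approach as the paper: triviality on $\mathcal{W}_{U}$ by unipotence, the determinant via the block-triangular form $M$, $\tilde{\gamma}$, $\widetilde{M^{-*}}$ on the flag, and the spinor norm by restricting to $\tilde{W}$ and to the hyperbolic piece $U\oplus\tilde{U}$. The only cosmetic difference is in the hyperbolic computation: the paper writes the $1$-dimensional scalar map $c\oplus c^{-1}$ explicitly as the product of the reflections in $\tilde{u}-\alpha\tilde{u}-u$ and $\tilde{u}-\alpha\tilde{u}-cu$ (with norms $-2$ and $-2c$, giving spinor norm $c$) and then invokes that such maps generate $\mathrm{GL}(U)$, whereas you factor $M$ through $\mathrm{SL}$ times a single diagonal scalar---both routes are standard and amount to the same reduction.
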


\begin{proof}
The triviality of both maps on $\mathcal{W}_{U}$ is easily verified (as a unipotent group), and for the rest we can identify the quotient with the Levi subgroup from Proposition \ref{pargrpgen} then up to the correction factor involving the map $\alpha$ from Lemma \ref{nonisocor}. Since an element of that Levi subgroup operates as $M$ on $U$, as $\gamma$ on $\tilde{W}$, and as $\widetilde{M^{-*}}$ on $\tilde{U}^{*}$, and the latter is equivalent, via Equations \eqref{Wtilde} and \eqref{tildeiso}, to the action of $M^{-*}$ on $U^{*}$, the value of the determinant immediately follows. For the spinor norm it remains to consider the action on $U\oplus\tilde{U}$, and we note that if $U=\mathbb{F}u$ is 1-dimensional and $\tilde{u}\in\tilde{U}$ satisfies $(\tilde{u},u)=1$ then the map acting on $U$ as the scalar $c$ and on the isotropic counterpart of $\tilde{U}$ from Lemma \ref{nonisocor} as $\frac{1}{c}$ is the composition of the reflection in a vector $\tilde{u}-\alpha\tilde{u}-u$ composed with the reflection in $\tilde{u}-\alpha\tilde{u}-cu$. This gives the asserted spinor norm in the 1-dimensional case, and since extensions of maps of that sort from subspaces of dimension 1 generates $\mathrm{GL}(U)$ for $U$ of any dimension, this proves the proposition.
\end{proof}

When $\mathbb{F}$ is a subfield of $\mathbb{R}$ and we extend scalars to $\mathbb{R}$ if necessary, the spinor norm also becomes $\{\pm1\}$-valued. Then for indefinite $V$, of some signature $(b_{+},b_{-})$, the group $\mathrm{O}(V)$ has four connected components, the kernel $\mathrm{SO}(V)$ of the determinant consists of two of them, and the identity component is denoted by $\mathrm{SO}^{+}(V)$. When $V$ is definite we have $\mathrm{SO}(V)=\mathrm{SO}^{+}(V)$, and $\mathrm{O}(V)$ has only two connected components. Recall that in this case the dimension of any isotropic subspace $U$ of $V$ lies between 0 and $\min\{b_{+},b_{-}\}$, and when $\dim U=0$ we have $\mathcal{P}_{U}=\mathrm{O}(V)$ (over any field). Since $\mathrm{GL}(U)$ has two connected components wherever $U$ is non-trivial, the surjectivity of the map from Lemma \ref{unipker} (proved in Proposition \ref{pargrpgen}) combines with Proposition \ref{detspnorm} and the connectedness of $\mathcal{W}_{U}$ to show that when $U$ is non-trivial and $W=U^{\perp}/U$, of signature $(b_{+}-\dim U,b_{-}-\dim U)$, is indefinite (i.e., when $0<\dim U<\min\{b_{+},b_{-}\}$), the group $\mathcal{P}_{U}$ has eight connected components. For non-trivial $U$ of maximal dimension $\min\{b_{+},b_{-}\}$ (hence definite $W$), this group has four connected components when $W$ is non-trivial (i.e., when $b_{+} \neq b_{-}$), and only two in case $\dim U=b_{+}=b_{-}>0$ and $W=\{0\}$. The intersection $\mathcal{P}_{U}\cap\mathrm{SO}(V)$ always contains half of these connected components, and the connected components of $\mathcal{P}_{U}\cap\mathrm{SO}^{+}(V)$ as well as of the intersection of $\mathcal{P}_{U}$ with the kernel $\mathrm{O}^{+}(V)$ of the spinor norm on $\mathrm{O}(V)$ alone can be easily determined (the latter will depend on the signature of $W$ when this space is definite and non-trivial).

We also note that Corollary \ref{stdim1} with $\mathbb{F}=\mathbb{R}$ reproduces, when combined with Proposition \ref{detspnorm}, the result from \cite{[Nak]} and \cite{[Z2]} that the connected component of the stabilizer of the (oriented) line $\ell$, which is the stabilizer in $\mathrm{SO}^{+}(V)$ of a ray in $\ell$, is the semi-direct product in which $\mathbb{R}_{+}^{\times}\times\mathrm{SO}^{+}(W)$ operates on the additive group of the vector space $W=\ell^{\perp}/\ell$. If $W$ is indefinite then the stabilizer of $\ell$ in $\mathrm{SO}^{+}(V)$ contains also elements inverting the orientation of $\ell$, provided that the corresponding element of $\mathrm{O}(W)$ (or of $\mathrm{SO}(W)$) does \emph{not} lie in $\mathrm{SO}^{+}(W)$, in correspondence with Proposition \ref{detspnorm} (examples for such elements are those denoted by $k_{a,A}$ in \cite{[Nak]} and in \cite{[Z2]}, with $a<0$).

\section{Lattices \label{Lattices}}

The main goal of this section is to establish a decomposition of an even lattice in a non-degenerate rational quadratic space $V$, as well as of the dual lattice, in a way that is adapted to the isotropic subspace $U$ of $V$. This decomposition, given in Theorem \ref{latdecom}, is crucial for determining the intersection of the corresponding arithmetic subgroup with $\mathcal{P}_{U}$. Some cases are known, e.g., the analysis involving the lattice $L_{0}'$ on page 41 of \cite{[Br]} for the case where $\dim U=1$, but I am not aware of any reference dealing with the general case.

\medskip

From now on we consider the case $\mathbb{F}=\mathbb{Q}$, and we take $L$ to be an even lattice in $V$. This means that $L$ is a finitely generated subgroup of full rank in $V$, with \[(\lambda,\lambda)\in2\mathbb{Z}\mathrm{\ when\ }\lambda \in L,\mathrm{\ hence\ }(\lambda,\mu)\in\mathbb{Z}\mathrm{\ for\ }\lambda\mathrm{\ and\ }\mu\mathrm{\ in\ }L,\mathrm{\ and\ }V=L_{\mathbb{Q}}.\] The notation for duals for lattices will mean the $\mathbb{Z}$-dual, and in particular \[L^{*}:=\mathrm{Hom}(L,\mathbb{Z}) \subseteq V^{*}\quad\mathrm{is\ identified\ with}\quad\big\{\nu \in V\big|\;(\nu,L)\subseteq\mathbb{Z}\big\} \subseteq V.\] We shall henceforth denote this subgroup of $V$ by $L^{*}$ as well, and we have \[L \subseteq L^{*},\quad\mathrm{where\ the\ \emph{discriminant\ group}}\quad\Delta_{L}:=L^{*}/L \quad\mathrm{of\ }L\mathrm{\ is\ finite}.\] Since $L$ is even, the quadratic form $\lambda\mapsto\frac{\lambda^{2}}{2}$ yields a $\mathbb{Q}/\mathbb{Z}$-valued quadratic form on $\Delta_{L}$, which we shall also denote by $\mu\mapsto\frac{\mu^{2}}{2}$, in addition to the natural $\mathbb{Q}/\mathbb{Z}$-valued bilinear form, which is non-degenerate and will be denoted by $(\mu,\nu)$ as well. Since elements of $\mathrm{O}(V)$ (or of $\mathrm{O}(V_{\mathbb{R}})$) that preserve $L$ must also preserve $L^{*}$ hence act on $\Delta_{L}$ (preserving the $\mathbb{Q}/\mathbb{Z}$-quadratic structure), we obtain a map \[\mathrm{Aut}(L)\to\mathrm{Aut}(\Delta_{L}),\quad\mathrm{and\ we\ set}\quad\Gamma_{L}:=\ker\big(\mathrm{Aut}(L)\to\mathrm{Aut}(\Delta_{L})\big)\cap\mathrm{SO}^{+}(V_{\mathbb{R}}).\] The arithmetic subgroup $\Gamma_{L}$ has better integral properties than $\mathrm{Aut}(L)$ itself (in particular it is more functorial and it is more adapted to theta lifts).

The isotropic subspaces of $V$ are in one-to-one correspondence with primitive isotropic sublattices of $L$, under the natural inverse maps \[(U \subseteq V\mathrm{\ isotropic}) \mapsto I=U \cap L\quad\mathrm{and}\quad(I \subseteq L\mathrm{\ primitive\ isotropic}) \mapsto U=I_{\mathbb{Q}}.\] For $I$ and $U$ related in this way, we set \[\mathcal{P}_{U_{\mathbb{R}}}^{0}\mathrm{\ to\ be\ the\ identity\ component\ of\ }\mathcal{P}_{U_{\mathbb{R}}},\quad\mathrm{and}\quad\Gamma_{L,I}:=\mathcal{P}_{U_{\mathbb{R}}}^{0}\cap\Gamma_{L}\] (this is the same as $\mathcal{P}_{U}^{0}\cap\Gamma_{L}$ for $\mathcal{P}_{U}^{0}:=\mathcal{P}_{U}\cap\mathcal{P}_{U_{\mathbb{R}}}^{0}$, and it typically has index 2 inside $\mathcal{P}_{U}\cap\Gamma_{L}$). For analyzing it we shall require the following notion, in which we recall the difference in meaning between $X^{*}$ for a vector space $X$ and $\Lambda^{*}$ for a lattice $\Lambda$.
\begin{defn}
Let $X$ and $\tilde{X}$ be subspaces of $V$ on which the restriction of the pairing yields a non-degenerate bilinear form on $X\times\tilde{X}$, so that it identifies $\tilde{X}$ with $X^{*}$ (and equivalently $X$ with $\tilde{X}^{*}$), and let $\Lambda \subseteq X$ and $\tilde{\Lambda}\subseteq\tilde{X}$ be lattices. We say that the pairing between $\Lambda$ and $\tilde{\Lambda}$ is \emph{unimodular} if $(\lambda,\tilde{\lambda})\in\mathbb{Z}$ for every $\lambda\in\Lambda$ and $\tilde{\lambda}\in\tilde{\Lambda}$, and if the resulting map from $\tilde{\Lambda}$ to $\Lambda^{*}$, or equivalently from $\Lambda$ to $\tilde{\Lambda}^{*}$, is an isomorphism of Abelian groups. \label{unimod}
\end{defn}
In particular, the pairing between $L$ and $L^{*} \subseteq V$ is unimodular.

Given isotropic $U=I_{\mathbb{Q}} \subseteq V$ and $I=U \cap L \subseteq L$ as above, we denote \[I_{L^{*}}:=U \cap L^{*},\quad I^{\perp}_{L}:=I^{\perp} \cap L=U^{\perp} \cap L,\quad\mathrm{and}\quad I^{\perp}_{L^{*}}:=I^{\perp} \cap L^{*}=U^{\perp} \cap L^{*},\] where $I \subseteq I_{L^{*}}$ and $I^{\perp}_{L} \subseteq I^{\perp}_{L^{*}}$ with finite indices. The fact that $I$ and $I^{\perp}_{L}$ are primitive in $L$ and $I_{L^{*}}$ and $I^{\perp}_{L^{*}}$ are primitive in $L^{*}$ (by definition) and the unimodularity of the pairing between $L$ and $L^{*}$ imply the natural identifications \[I^{*} \cong L^{*}/I^{\perp}_{L^{*}},\quad(I_{L^{*}})^{*} \cong L/I^{\perp}_{L},\quad (I^{\perp}_{L})^{*} \cong L^{*}/I_{L^{*}},\mathrm{\ and\ }(I^{\perp}_{L^{*}})^{*} \cong L/I\quad\mathrm{over\ }\mathbb{Z}.\] It follows that \[\Lambda:=I^{\perp}_{L}/I \subseteq W=U^{\perp}/U\mathrm{\ is\ an\ even\ lattice,\ and\ }\Lambda^{*} \subseteq W^{*}=W\mathrm{\ is\ given\ by}\] \[\Lambda^{*}=\big\{\xi:I_{L}^{\perp}\to\mathbb{Z}\big|\;(\xi,I)=0\big\}=(I_{L}^{\perp})^{*} \cap I^{\perp}=(L^{*}/I_{L^{*}}) \cap I^{\perp}=I^{\perp}_{L^{*}}/I_{L^{*}}.\] As expected we get
\[\Lambda\subseteq\Lambda^{*}\mathrm{\ with\ finite\ index,\ and\ we\ set}\quad\Delta_{\Lambda}:=\Lambda^{*}/\Lambda\quad\mathrm{and}\quad p:\Lambda^{*}\to\Delta_{\Lambda}.\] It will also be useful to consider
\begin{equation}
L^{*}_{I}=\big\{\mu \in L^{*}\big|\;\exists\nu \in L,\ \forall\lambda \in I,\ (\mu,\lambda)=(\nu,\lambda)\big\}=L+I^{\perp}_{L^{*}} \subseteq L^{*}, \label{L*I}
\end{equation}
from which we deduce that \[L^{*}/L^{*}_{I}\cong^{\displaystyle{L^{*}/I^{\perp}_{L^{*}}}}\!\!\Big/\!_{\displaystyle{L^{*}_{I}/I^{\perp}_{L^{*}}}}=^{\displaystyle{L^{*}/I^{\perp}_{L^{*}}}}\!\!\Big/
\!_{\displaystyle{(L+I^{\perp}_{L^{*}})/I^{\perp}_{L^{*}}}}\cong^{\displaystyle{L^{*}/I^{\perp}_{L^{*}}}}\!\!\Big/\!_{\displaystyle{L/I^{\perp}_{L}}} \cong I^{*}/(I_{L^{*}})^{*}\!.\] To $I$ one associates the natural isotropic subgroup \[\quad H_{I}:=(L+I_{L^{*}})/L\subseteq\Delta_{L},\mathrm{\ which\ satisfies\ }H_{I} \cong I_{L^{*}}/I\mathrm{\ and\ thus\ }L^{*}/L^{*}_{I} \cong H_{I}^{*}.\] The perpendicularity of $I_{L^{*}}$ and $I^{\perp}_{L^{*}}$ now shows that $I^{\perp}_{L^{*}}/I^{\perp}_{L}=L^{*}_{I}/L$ is $\mathbb{Q}/\mathbb{Z}$-perpendicular to $H_{I}$ in $\Delta_{L}$, and since the last equality shows that it has index $|H_{I}|$ there, it must equal $H_{I}^{\perp}$ by non-degeneracy. Since one can also express $\Delta_{\Lambda}=\Lambda^{*}/\Lambda$ as \[^{\displaystyle{I^{\perp}_{L^{*}}/I_{L^{*}}}}\Big/_{\displaystyle{I^{\perp}_{L}/I}} \cong I^{\perp}_{L^{*}}/(I^{\perp}_{L}+I_{L^{*}})\cong^{\displaystyle{I^{\perp}_{L^{*}}/I^{\perp}_{L}}}\Big/_{\displaystyle{(I^{\perp}_{L}+I_{L^{*}})/I^{\perp}_{L}}}
\cong^{\displaystyle{I^{\perp}_{L^{*}}/I^{\perp}_{L}}}\Big/_{\displaystyle{I_{L^{*}}/I}},\] our formulae for $H_{I}$ and $H_{I}^{\perp}$ identify $\Delta_{\Lambda}$ with $H_{I}^{\perp}/H_{I}$.

\smallskip

For giving good coordinates for $\mathcal{P}_{U}$ (as in, e.g., Equation \eqref{PUgenform}), we required a complementary subspace $\tilde{U}$ for $U^{\perp}$ in $V$. Here we shall need it to be complementary over $\mathbb{Z}$, as defined in the following lemma.
\begin{lem}
Let $U$ be an isotropic subspace of $V$, and set $I=U \cap L$. Then there exists a sublattice $\tilde{I}$ of $L^{*}$ whose pairing with $I$ is unimodular in the sense of Definition \ref{unimod}. Such a sublattice is primitive in $L^{*}$. \label{compoverZ}
\end{lem}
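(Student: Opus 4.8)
The plan is to construct $\tilde{I}$ directly by building a basis of $L^*$ adapted to $I$. First I would choose a $\mathbb{Z}$-basis $e_1,\dots,e_k$ of the primitive sublattice $I$ of $L$. Since $I$ is primitive in $L$, the quotient $L/I$ is free, and since the pairing between $L$ and $L^*$ is unimodular (Definition \ref{unimod}), the map $L^* \to I^*$ given by restriction of functionals is surjective: indeed, it is dual to the inclusion $I \hookrightarrow L$, which is split over $\mathbb{Z}$ because $I$ is primitive. Hence I may pick $\mu_1,\dots,\mu_k \in L^*$ whose restrictions to $I$ form the basis of $I^*$ dual to $e_1,\dots,e_k$, i.e. $(\mu_i, e_j) = \delta_{ij}$. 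Set $\tilde{I} := \mathbb{Z}\mu_1 + \dots + \mathbb{Z}\mu_k \subseteq L^*$.

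Next I would verify the two asserted properties. For unimodularity: by construction $(\mu_i, e_j) = \delta_{ij} \in \mathbb{Z}$, so the pairing between $\tilde{I}$ and $I$ is integral, and the resulting matrix of the pairing in these bases is the identity, so the induced map $\tilde{I} \to I^*$ (and equally $I \to \tilde{I}^*$) is an isomorphism of abelian groups — this is exactly the content of Definition \ref{unimod}. I should also check that $\mu_1,\dots,\mu_k$ are $\mathbb{Z}$-linearly independent, which is immediate since their Gram pairing matrix against $e_1,\dots,e_k$ is the identity, so they span a rank-$k$ sublattice. For primitivity of $\tilde{I}$ in $L^*$: suppose $n\nu \in \tilde{I}$ for some $\nu \in L^*$ and positive integer $n$; write $n\nu = \sum_i c_i \mu_i$ with $c_i \in \mathbb{Z}$. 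Pairing with $e_j \in I \subseteq L$ gives $n(\nu, e_j) = c_j$. Since $\nu \in L^*$ and $e_j \in L$, we have $(\nu,e_j) \in \mathbb{Z}$, so $n \mid c_j$ for each $j$, whence $\nu = \sum_i \frac{c_i}{n}\mu_i \in \tilde{I}$. Thus $L^*/\tilde{I}$ is torsion-free, i.e. $\tilde{I}$ is primitive in $L^*$.

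I do not expect any real obstacle here; the statement is essentially the observation that a primitive sublattice of a lattice splits off as a direct summand, combined with the self-duality of the pairing between $L$ and $L^*$. The only point requiring a little care is making sure the surjectivity of $L^* \to I^*$ is correctly justified — it uses precisely that $I$ is a direct summand of $L$ (equivalently primitive), together with the identification of $L^*$ with $\mathrm{Hom}(L,\mathbb{Z})$ via the unimodular pairing. Everything else is a short bookkeeping argument with dual bases. One could alternatively phrase the construction more invariantly: pick any $\mathbb{Z}$-linear splitting $L \cong I \oplus I'$, and let $\tilde{I} \subseteq L^*$ be the image of $I^*$ under the composite $I^* \hookrightarrow I^* \oplus (I')^* \cong L^* $; the verifications are the same.
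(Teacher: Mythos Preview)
Your proof is correct and is essentially the same as the paper's: the paper writes the argument in the invariant form you mention at the end (split $L=I\oplus J$ using primitivity, then take the $I^{*}$-summand in $L^{*}\cong I^{*}\oplus J^{*}$), while your main argument just unpacks this with explicit dual bases. The verifications of unimodularity and primitivity that you spell out are left implicit in the paper's one-line proof.
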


\begin{proof}
The fact that $I$ is primitive in $L$ means that $L=I \oplus J$ for some subgroup $J$ of $L$ (not necessarily orthogonal to $I$), implying that $L^{*}=\mathrm{Hom}(L,\mathbb{Z})$ is isomorphic to $I^{*} \oplus J^{*}$. Considering $L^{*}$ as a subgroup of $V$, the part corresponding to $I^{*}$ becomes the required sublattice $\tilde{I}$, which is clearly primitive in $L^{*}$. This proves the lemma.
\end{proof}
We shall use only complements $\tilde{U}$ that are of the form $\tilde{I}_{\mathbb{Q}}$ for a sublattice $\tilde{I}$ of $L^{*}$ satisfying the condition from Lemma \ref{compoverZ} (this is the reason why we cannot always take $\tilde{U}$ to be isotropic, since isotropic $\tilde{I}$ with this property need not always exist). Recalling $L^{*}_{I}$ from Equation \eqref{L*I}, we shall also denote
\begin{equation}
\tilde{I}_{L}=\big\{\mu\in\tilde{I}\big|\;\exists\nu \in L,\ \forall\lambda \in I,\ (\mu,\lambda)=(\nu,\lambda)\big\}=L^{*}_{I}\cap\tilde{I}, \label{tildeIL}
\end{equation}
which is a primitive sublattice of $L^{*}_{I}$. Note that $\tilde{I}_{L}$ is the subgroup of $\tilde{I}$ that pairs in a unimodular manner with $I_{L^{*}}$, so that $(\tilde{I}_{L})^{*} \cong I_{L^{*}}$. However, we shall make use of the $\mathbb{Q}/\mathbb{Z}$-dual of $\tilde{I}_{L}$, which is \[\mathrm{Hom}(\tilde{I}_{L},\mathbb{Q}/\mathbb{Z})=\mathrm{Hom}(\tilde{I}_{L},\mathbb{Q})/\mathrm{Hom}(\tilde{I}_{L},\mathbb{Z})=\tilde{U}^{*}/(\tilde{I}_{L})^{*} \cong U/I_{L^{*}}\] because $(\tilde{I}_{L})_{\mathbb{Q}}=\tilde{I}_{\mathbb{Q}}=\tilde{U}$ and $\tilde{U}^{*} \cong U$ by Equation \eqref{Wtilde}. Recall that this equation also contains the isomorphism \[W\cong\tilde{W}=(U\oplus\tilde{U})^{\perp},\mathrm{\ and\ denote\ the\ images\ of\ }\Lambda\subseteq\Lambda^{*} \subseteq W\mathrm{\ by\ }\tilde{\Lambda}\subseteq\tilde{\Lambda}^{*}\subseteq\tilde{W}.\] It follows that \[\tilde{\Lambda}^{*}/\tilde{\Lambda}\cong\Lambda^{*}/\Lambda=\Delta_{\Lambda},\quad\mathrm{and\ we\ denote\ the\ projection\ }\tilde{\Lambda}^{*}\to\Delta_{\Lambda}\mathrm{\ by\ }\tilde{p}.\]

Analyzing $\Gamma_{L,I}$ will require the decompositions of $L$ and $L^{*}$ according to the splitting of $V$ as $U\oplus\tilde{W}\oplus\tilde{U}$. Our first main result does this also for $L^{*}_{I}$.
\begin{thm}
There exists a homomorphism
\[\iota:\tilde{I}_{L} \to \Delta_{\Lambda},\quad\mathrm{which\ satisfies}\quad\tfrac{(\iota\tilde{u})^{2}}{2}=\tfrac{\tilde{u}^{2}}{2}+\mathbb{Z}\in\mathbb{Q}/\mathbb{Z}\quad\mathrm{for\ every}\quad\tilde{u}\in\tilde{I}_{L},\] such that for $u \in U$, $w\in\tilde{W}$, and $\tilde{u}\in\tilde{U}$ the sum $u+w+\tilde{u}$ is in $L$ if and only if
\begin{equation}
\tilde{u}\in\tilde{I}_{L},\quad w\in\tilde{\Lambda}^{*}\mathrm{\ with\ }\tilde{p}w=w+\tilde{\Lambda}=\iota\tilde{u}\in\Delta_{\Lambda},\quad\mathrm{and}\quad u\in-2\alpha \tilde{u}+I, \label{decomL}
\end{equation}
where $\alpha$ is the map from Lemma \ref{nonisocor}. On the other hand, given $u$, $w$, and $\tilde{u}$ as above, the sum $u+w+\tilde{u}$ lies in $L^{*}$ (resp. in $L^{*}_{I}$) if and only if
\begin{equation}
w\in\tilde{\Lambda}^{*},\quad u+I_{L^{*}}=-\iota^{*}(w+\tilde{\Lambda})=-\iota^{*}\tilde{p}w,\quad\mathrm{and}\quad\tilde{u}\in\tilde{I}\ (\mathrm{resp.\ }\tilde{u}\in\tilde{I}_{L}), \label{decomL*}
\end{equation}
where $\iota^{*}:\Delta_{\Lambda} \to U/I_{L^{*}}$ is the $\mathbb{Q}/\mathbb{Z}$-dual of $\iota:\tilde{I}_{L} \to \Delta_{\Lambda}$. \label{latdecom}
\end{thm}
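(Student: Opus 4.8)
\section*{Proof proposal for Theorem \ref{latdecom}}

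The plan is to work throughout in the coordinates $V=U\oplus\tilde{W}\oplus\tilde{U}$ attached to $\tilde{U}$, writing $\pi_{U}$, $\pi_{\tilde{W}}$, $\pi_{\tilde{U}}$ for the three coordinate projections; thus $\pi_{\tilde{U}}$ is the projection along $U^{\perp}$ from Equation \eqref{Wtilde}, and $\pi_{\tilde{W}}$ restricted to $U^{\perp}$ is the isometry $W\cong\tilde{W}$ of that equation. The first step is to describe $I^{\perp}_{L}$ and $I^{\perp}_{L^{*}}$ inside $U\oplus\tilde{W}$. For $\mu\in I^{\perp}_{L}\subseteq L$ the functional characterization $(\pi_{U}\mu,\tilde{x})=(\mu,\tilde{x})$ for $\tilde{x}\in\tilde{U}$ shows, since $\mu\in L$ and $\tilde{I}\subseteq L^{*}$ by Lemma \ref{compoverZ}, that $\pi_{U}\mu$ is integral on $\tilde{I}$, hence $\pi_{U}\mu\in I$; combined with $\pi_{U}(I)=I$ and $U\cap\tilde{W}=0$ this gives the clean splitting $I^{\perp}_{L}=I\oplus\tilde{\Lambda}$, where $\tilde{\Lambda}=\pi_{\tilde{W}}(I^{\perp}_{L})$ (in particular $\tilde{\Lambda}\subseteq L$, which is exactly where the hypothesis $\tilde{I}\subseteq L^{*}$ enters). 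For $I^{\perp}_{L^{*}}$ one only gets $\pi_{\tilde{W}}(I^{\perp}_{L^{*}})=\tilde{\Lambda}^{*}$ with $\ker\big(\pi_{\tilde{W}}|_{I^{\perp}_{L^{*}}}\big)=I^{\perp}_{L^{*}}\cap U=I_{L^{*}}$, so $I^{\perp}_{L^{*}}$ is the graph of a map sending $w\in\tilde{\Lambda}^{*}$ to a coset $g(w)+I_{L^{*}}\subseteq U$, with $g$ additive modulo $I_{L^{*}}$; since $\tilde{\Lambda}\subseteq I^{\perp}_{L}\subseteq I^{\perp}_{L^{*}}$, the induced homomorphism $\tilde{\Lambda}^{*}\to U/I_{L^{*}}$ kills $\tilde{\Lambda}$ and therefore factors through a homomorphism $\bar{g}:\Delta_{\Lambda}\to U/I_{L^{*}}$.

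Next I compute the $\tilde{U}$- and $U$-components. The identifications $L^{*}/I^{\perp}_{L^{*}}\cong I^{*}$ and $L/I^{\perp}_{L}\cong(I_{L^{*}})^{*}$ are realized by $\pi_{\tilde{U}}$, giving $\pi_{\tilde{U}}(L^{*})=\tilde{I}$ and $\pi_{\tilde{U}}(L)=\pi_{\tilde{U}}(L^{*}_{I})=\tilde{I}_{L}$, which establishes the necessity of $\tilde{u}\in\tilde{I}$ (resp.\ $\tilde{u}\in\tilde{I}_{L}$). For the $U$-component, if $v=u+w+\tilde{u}\in L$ then $(\pi_{U}v)(\tilde{x})=(v,\tilde{x})-(\tilde{u},\tilde{x})=(v,\tilde{x})-2(\alpha\tilde{u},\tilde{x})$ for $\tilde{x}\in\tilde{U}$, which is integral on $\tilde{I}$ because $v\in L$ and $\tilde{I}\subseteq L^{*}$; hence $u+2\alpha\tilde{u}\in I$, i.e.\ $u\in-2\alpha\tilde{u}+I$. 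I then define $\iota:\tilde{I}_{L}\to\Delta_{\Lambda}$ by $\iota\tilde{u}=\tilde{p}(\pi_{\tilde{W}}v)$ for any $v\in L$ lying over $\tilde{u}$: pairing $v$ against $\tilde{\Lambda}\subseteq L$ shows $\pi_{\tilde{W}}v\in\tilde{\Lambda}^{*}$ so this makes sense, and two lifts of $\tilde{u}$ differ by $I^{\perp}_{L}=I\oplus\tilde{\Lambda}$, so $\iota$ is well defined and clearly additive. The quadratic identity falls out of $\tfrac{v^{2}}{2}=\tfrac{w^{2}}{2}+(u,\tilde{u})+\tfrac{\tilde{u}^{2}}{2}\in\mathbb{Z}$ for $v\in L$: modulo $\mathbb{Z}$ the first summand equals $\tfrac{(\iota\tilde{u})^{2}}{2}$ (by the definition of the $\mathbb{Q}/\mathbb{Z}$-form on $\Delta_{\Lambda}$, using that $W\cong\tilde{W}$ is an isometry) and, by the previous computation, $(u,\tilde{u})\equiv(-2\alpha\tilde{u},\tilde{u})=-\tilde{u}^{2}$, so $\tfrac{(\iota\tilde{u})^{2}}{2}\equiv\tfrac{\tilde{u}^{2}}{2}$.

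The crux is to identify $\bar{g}$ with $-\iota^{*}$, and this is the step I expect to require the most care, since it involves chasing several a priori different dualities and pinning down signs. I pair $v\in L$ against $\nu\in I^{\perp}_{L^{*}}$: writing $\nu=u_{\nu}+w_{\nu}$ with $w_{\nu}\in\tilde{\Lambda}^{*}$ and $u_{\nu}+I_{L^{*}}=\bar{g}(\tilde{p}w_{\nu})$, the orthogonality relations among $U$, $\tilde{W}$, $\tilde{U}$ collapse $(v,\nu)$ to $(w,w_{\nu})+(\tilde{u},u_{\nu})\in\mathbb{Z}$. Reducing modulo $\mathbb{Z}$, $(w,w_{\nu})\equiv(\iota\tilde{u},\tilde{p}w_{\nu})$ in the $\mathbb{Q}/\mathbb{Z}$-form on $\Delta_{\Lambda}$, while $(\tilde{u},u_{\nu})\equiv\langle\bar{g}(\tilde{p}w_{\nu}),\tilde{u}\rangle$ under the perfect pairing $(U/I_{L^{*}})\times\tilde{I}_{L}\to\mathbb{Q}/\mathbb{Z}$. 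Since $(\iota\tilde{u},d)=\langle\iota^{*}(d),\tilde{u}\rangle$ by the definition of $\iota^{*}$ (self-duality of $\Delta_{\Lambda}$ via its bilinear form), and since $\pi_{\tilde{W}}(I^{\perp}_{L^{*}})$ maps onto $\Delta_{\Lambda}$ while $\pi_{\tilde{U}}(L)=\tilde{I}_{L}$, letting $v$ and $\nu$ vary yields $\langle\iota^{*}(d)+\bar{g}(d),\tilde{u}\rangle=0$ for all $d\in\Delta_{\Lambda}$ and all $\tilde{u}\in\tilde{I}_{L}$, hence $\bar{g}=-\iota^{*}$ by non-degeneracy.

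Finally I assemble the statement. Necessity of every clause has been obtained above (for $L$: $\tilde{u}\in\tilde{I}_{L}$, $u\in-2\alpha\tilde{u}+I$, and $\pi_{\tilde{W}}v\in\tilde{\Lambda}^{*}$ with class $\iota\tilde{u}$; for $L^{*}$ and $L^{*}_{I}$: $\tilde{u}\in\tilde{I}$ resp.\ $\tilde{I}_{L}$, and then $v-\tilde{u}\in I^{\perp}_{L^{*}}$ because $\tilde{I}\subseteq L^{*}$ resp.\ $\tilde{I}_{L}\subseteq L^{*}_{I}$, so its $\tilde{W}$-part lies in $\tilde{\Lambda}^{*}$ and its $U$-part in $-\iota^{*}\tilde{p}w+I_{L^{*}}$ by the description of $I^{\perp}_{L^{*}}$). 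For sufficiency one subtracts a fixed lift: given data as in Equation \eqref{decomL}, a lift $v_{0}\in L$ of $\tilde{u}$ has $\pi_{\tilde{W}}v_{0}$ in the coset $\iota\tilde{u}$ and $\pi_{U}v_{0}\equiv-2\alpha\tilde{u}$, so $u+w+\tilde{u}-v_{0}\in I\oplus\tilde{\Lambda}=I^{\perp}_{L}\subseteq L$; given data as in Equation \eqref{decomL*}, one writes $u+w+\tilde{u}=\tilde{u}+(u+w)$ with $\tilde{u}\in\tilde{I}\subseteq L^{*}$ (resp.\ $\tilde{u}\in\tilde{I}_{L}\subseteq L^{*}_{I}$) and $u+w\in I^{\perp}_{L^{*}}\subseteq L^{*}$ by the cochain description. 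This completes the proof.
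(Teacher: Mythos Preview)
Your proof is correct and follows essentially the same route as the paper: split $I^{\perp}_{L}=I\oplus\tilde{\Lambda}$ using the unimodular pairing with $\tilde{I}$, describe $I^{\perp}_{L^{*}}$ as the graph of a homomorphism $\bar{g}:\Delta_{\Lambda}\to U/I_{L^{*}}$, define $\iota$ from the $\tilde{W}$-component of lifts in $L$, identify $\bar{g}=-\iota^{*}$ by pairing $L$ against $I^{\perp}_{L^{*}}$, pin down the $U$-component via $(\cdot,\tilde{I})$ and Lemma~\ref{nonisocor}, and read off the norm identity from $\tfrac{v^{2}}{2}\in\mathbb{Z}$. The only differences from the paper are cosmetic (explicit projection notation, and doing the $\alpha$-computation before rather than after defining $\iota$); the logical skeleton is the same.
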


\begin{proof}
The unimodularity of the pairing of $\tilde{I}$ from Lemma \ref{compoverZ} with $I$, the definition of $L^{*}_{I}$ and $\tilde{I}_{L}$ in Equations \eqref{L*I} and \eqref{tildeIL} respectively, and the fact that $I^{\perp}_{L^{*}} \subseteq L^{*}_{I} \subseteq L^{*}$, combine to show that
\[L^{*}=I^{\perp}_{L^{*}}\oplus\tilde{I} \qquad\mathrm{and}\qquad L^{*}_{I}=I^{\perp}_{L^{*}}\oplus\tilde{I}_{L}.\] This reduces the proof of Equation \eqref{decomL*} to the determination of the decomposition of $I^{\perp}_{L^{*}}$ inside $U\oplus\tilde{W}$. On the other hand, we recall that \[0 \to I \to I^{\perp}_{L}\to\Lambda\to0\mathrm{\ is\ exact},\quad\mathrm{and\ that}\quad(I^{\perp}_{L},\tilde{I})\subseteq(L,\tilde{I})\subseteq\mathbb{Z}\mathrm{\ \ and\ \ }\tilde{I}^{*} \cong I,\] from which it follows via the definition of $\tilde{W}$ in Equation \eqref{Wtilde} that
\begin{equation}
L=I\oplus(L\cap\tilde{I}^{\perp})\quad\mathrm{and}\quad I^{\perp}_{L}=I\oplus(L\cap\tilde{W}),\quad\mathrm{and\ therefore}\quad I^{\perp}_{L}=I\oplus\tilde{\Lambda}. \label{IperpL}
\end{equation}
Since the pairing of $I^{\perp}_{L^{*}}$ with $I^{\perp}_{L}$ is integral, and an element of $U\oplus\tilde{W}$ lies in $U\oplus\tilde{\Lambda}^{*}$ if and only if it pairs integrally with $I\oplus\tilde{\Lambda}$, Equation \eqref{IperpL} implies that \[I^{\perp}_{L^{*}} \subseteq U\oplus\tilde{\Lambda}^{*}, \qquad\mathrm{and}\qquad I^{\perp}_{L^{*}}\cap(U\oplus\tilde{\Lambda})=(I^{\perp}_{L^{*}} \cap U)+I^{\perp}_{L}=I_{L^{*}}+I^{\perp}_{L}=I_{L^{*}}\oplus\tilde{\Lambda}.\] Recalling that $I^{\perp}_{L^{*}}$ projects onto $\Lambda^{*}$, and noting that the inverse image of $\Lambda$ in that projection is $I_{L^{*}}\oplus\tilde{\Lambda}$ by the last equation, we deduce the existence of a homomorphism $\hat{\iota}:\Delta_{\Lambda} \to U/I_{L^{*}}$ such that if $u \in U$ and $w\in\tilde{\Lambda}^{*}$ then
\begin{equation}
u+w \in I^{\perp}_{L^{*}}\quad\Longleftrightarrow\quad u+I_{L^{*}}=-\hat{\iota}\tilde{p}w \in U/I_{L^{*}},\quad\mathrm{with}\quad\tilde{p}w=w+\tilde{\Lambda}\in\Delta_{\Lambda}. \label{IperpLdual}
\end{equation}
The proof of Equation \eqref{decomL*} thus reduces to finding the homomorphism $\iota$ for Equation \eqref{decomL}, and showing that $\hat{\iota}=\iota^{*}$.

The definitions of $L^{*}_{I}$ and $\tilde{I}_{L}$ in Equations \eqref{L*I} and \eqref{tildeIL} and our description of $I^{\perp}_{L^{*}}$ now imply that \[L \subseteq L+I^{\perp}_{L^{*}}=L^{*}_{I}=I^{\perp}_{L^{*}}\oplus\tilde{I}_{L} \subseteq U\oplus\tilde{\Lambda}^{*}\oplus\tilde{I}_{L},\] so that the projection from $L$ to $\tilde{I}_{L}$ in these coordinates is surjective, and Equation \eqref{IperpL} shows that $L\cap(U\oplus\tilde{\Lambda}^{*})=I^{\perp}_{L}=I\oplus\tilde{\Lambda}$. It follows that there are homomorphisms $\iota:\tilde{I}_{L}\to\Delta_{\Lambda}$ and $\hat{\alpha}:\tilde{I}_{L} \to U/I$ such that for a triple $u \in U$, $w\in\tilde{\Lambda}^{*}$, and $\tilde{u}\in\tilde{U}$ we have $u+w+\tilde{u} \in L$ if and only if \[\tilde{u}\in\tilde{I}_{L},\quad w\in\tilde{\Lambda}^{*}\mathrm{\ with\ }\tilde{p}w=w+\tilde{\Lambda}=\iota\tilde{u}\in\Delta_{\Lambda},\quad\mathrm{and}\quad u+I=-\hat{\alpha}\tilde{u},\] which is almost Equation \eqref{decomL}. The inclusion $(L,I^{\perp}_{L^{*}})\subseteq\mathbb{Z}$, Equation \eqref{IperpLdual}, the isotropy of $U$, and the perpendicularity of $\tilde{W}$ and $\tilde{U}$ from Equation \eqref{Wtilde} now easily imply the equality $\hat{\iota}=\iota^{*}$, thus establishing Equation \eqref{decomL*}. For proving Equation \eqref{decomL} we only need to show that if $\tilde{u}\in\tilde{I}_{L}$ and $\alpha$ is the map from Lemma \ref{nonisocor} then $\hat{\alpha}\tilde{u}$ is the coset $2\alpha\tilde{u}+I$ in $U/I$. The decomposition of $L$ in Equation \eqref{IperpL} and the fact that we require the image of $u$ modulo $I$ allow us to restrict attention to $u+w+\tilde{u}\in L\cap\tilde{I}^{\perp}$, whose pairing with $\tilde{v}\in\tilde{U}$ vanishes. As $\tilde{v}\in\tilde{I}$ is perpendicular to $\tilde{W}$ by definition, we obtain from the proof of Lemma \ref{nonisocor} that \[(u+\tilde{u},\tilde{v})=0\mathrm{\ hence\ }(-u,\tilde{v})=(\tilde{u},\tilde{v})=2(\alpha\tilde{u},\tilde{v})\mathrm{\ for\ }\tilde{v}\in\tilde{U},\mathrm{\ and\ thus\ }u=-2\alpha\tilde{u}\] (since $u$ and $2\alpha\tilde{u}$ are in $U$ and $U\cong\tilde{U}^{*}$ via Equation \eqref{Wtilde}). This implies the desired relation $\hat{\alpha}\tilde{u}=-u+I=2\alpha\tilde{u}+I$ between $\hat{\alpha}$ and $\alpha$, and Equation \eqref{decomL} follows.

It only remains to prove the norm property of $\iota$. For this, take $u$, $w$, and $\tilde{u}$ as in Equation \eqref{decomL}, and recall that $L$ is an even lattice. Thus the expression \[\tfrac{(u+w+\tilde{u})^{2}}{2}=\tfrac{w^{2}}{2}+\tfrac{\tilde{u}^{2}}{2}+(\tilde{u},u)=\tfrac{w^{2}}{2}+\tfrac{\tilde{u}^{2}}{2}-2(\tilde{u},\alpha\tilde{u})+(\tilde{u},v)=
\tfrac{w^{2}}{2}-\tfrac{\tilde{u}^{2}}{2}+(\tilde{u},v),\] where we have used the fact that $u=-2\alpha\tilde{u}+v$ for some $v \in I$ and Lemma \ref{nonisocor}, must be integral, and since $(I,\tilde{I}_{L})\subseteq\mathbb{Z}$, we may ignore the last summand. The fact that when $w\in\tilde{\Lambda}^{*}$ the class of $\frac{w^{2}}{2}$ in $\mathbb{Q}/\mathbb{Z}$ depends only on $w+\tilde{\Lambda}\in\Delta_{\Lambda}$, and this coset is $\iota\tilde{u}$ by Equation \eqref{decomL}, thus proves the desired norm condition. This completes the proof of the theorem.
\end{proof}
Equation \eqref{decomL*} in Theorem \ref{latdecom} also reproduces the isomorphism between the groups $L^{*}/L^{*}_{I}$ and $I^{*}/(I_{L^{*}})^{*}$ (and with it the equality $[L^{*}:L^{*}_{I}]=|H_{I}|$), since we have seen that $\tilde{I} \cong I^{*}$ and $\tilde{I}_{L}\cong(I_{L^{*}})^{*}$.

Equations \eqref{decomL} and \eqref{decomL*} take the simplest form in case $\iota$ is the trivial map. Indeed, in this case they reduce to
\begin{equation}
L^{*}=I_{L^{*}}\oplus\tilde{\Lambda}^{*}\oplus\tilde{I},\qquad L^{*}_{I}=I_{L^{*}}\oplus\tilde{\Lambda}^{*}\oplus\tilde{I}_{L},\qquad\mathrm{and}\qquad L \subseteq U\oplus\tilde{\Lambda}\oplus\tilde{I}_{L} \label{simpdecom}
\end{equation}
with only the $\alpha$-corrections in the $U$-coordinates of elements of $L$ (which are therefore contained in $I_{L^{*}}$). In general, for a given choice of $U$ (and $I$) there are many possible choices for the complement $\tilde{U}$ (or $\tilde{I}$) satisfying the condition of Lemma \ref{compoverZ}. In order to compare the resulting maps from Theorem \ref{latdecom}, we forget the norm condition, and consider $\iota$ to be defined on $(I_{L^{*}})^{*}$ (whose definition does not depend on $\tilde{U}$), and since this group is contained in $I^{*}$ we have a restriction map \[\mathrm{Res}^{I^{*}}_{(I_{L^{*}})^{*}}:\mathrm{Hom}(I^{*},\Delta_{\Lambda})\to\mathrm{Hom}\big((I_{L^{*}})^{*},\Delta_{\Lambda}\big).\] The behavior of $\iota$ under changing the choice of $\tilde{U}$ is as follows.
\begin{prop}
Replacing $\tilde{U}$ and $\tilde{I}$ by another complement as in Lemma \ref{compoverZ} changes the map $\iota:(I_{L^{*}})^{*}\to\Delta_{\Lambda}$ from Theorem \ref{latdecom} by the restriction of a homomorphism from $I^{*}$ to $\Delta_{\Lambda}$. In particular, the choice of $U$ (and $I$) determines a class in $\mathrm{Hom}\big((I_{L^{*}})^{*},\Delta_{\Lambda}\big)/\mathrm{Res}^{I^{*}}_{(I_{L^{*}})^{*}}\big(\mathrm{Hom}(I^{*},\Delta_{\Lambda})\big)$. \label{choicecomp}
\end{prop}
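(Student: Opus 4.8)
The plan is to factor $\iota$ through an intrinsic (i.e.\ $\tilde{U}$-independent) homomorphism, and then to read off the change of $\iota$ from the change of complement. Note first that every element of $I^{\perp}_{L^{*}}=U^{\perp}\cap L^{*}$ lies in $U^{\perp}$, so the canonical projection $U^{\perp}\to W$ sends it into $\Lambda^{*}$ (its pairings with $I^{\perp}_{L}$ are integral since $I^{\perp}_{L^{*}}\subseteq L^{*}$ and $I^{\perp}_{L}\subseteq L$), and composing with $p:\Lambda^{*}\to\Delta_{\Lambda}$ produces a homomorphism $q:I^{\perp}_{L^{*}}\to\Delta_{\Lambda}$ none of whose ingredients depends on $\tilde{U}$. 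The first step is to check, straight from Equation \eqref{decomL}, that for $\tilde{u}\in\tilde{I}_{L}$ and any lift $\ell=u+w+\tilde{u}\in L$ (which exists by Theorem \ref{latdecom}) one has $\ell-\tilde{u}=u+w\in I^{\perp}_{L^{*}}$ and $q(\ell-\tilde{u})=\tilde{p}w=\iota\tilde{u}$. Thus $\iota$ is determined by the intrinsic map $q$ together with the decomposition $L^{*}=I^{\perp}_{L^{*}}\oplus\tilde{I}$ attached to $\tilde{U}$.

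Next I would compare two complements $\tilde{I}$ and $\tilde{I}'$ (with spaces $\tilde{U},\tilde{U}'$), both satisfying Lemma \ref{compoverZ}. From the proof of Theorem \ref{latdecom} we have $L^{*}=I^{\perp}_{L^{*}}\oplus\tilde{I}=I^{\perp}_{L^{*}}\oplus\tilde{I}'$, so there is a homomorphism $g:\tilde{I}\to I^{\perp}_{L^{*}}$, namely the $I^{\perp}_{L^{*}}$-component of the inclusion $\tilde{I}\hookrightarrow L^{*}$ read in the second decomposition, with $\tilde{u}-g(\tilde{u})\in\tilde{I}'$ for all $\tilde{u}\in\tilde{I}$. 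One checks that $g$ carries $\tilde{I}_{L}=L^{*}_{I}\cap\tilde{I}$ into $I^{\perp}_{L^{*}}\subseteq L^{*}_{I}$, so $\tilde{u}-g(\tilde{u})\in L^{*}_{I}\cap\tilde{I}'=\tilde{I}'_{L}$ by \eqref{tildeIL}; moreover, since $g(\tilde{u})\in I^{\perp}_{L^{*}}$ is orthogonal to $I_{L^{*}}\subseteq U$ (and to $I\subseteq U$), the elements $\tilde{u}$ and $\tilde{u}-g(\tilde{u})$ define the same functional on $I_{L^{*}}$, hence the same one on $I$. Therefore, under the identifications $\tilde{I}_{L}\cong(I_{L^{*}})^{*}$, $\tilde{I}'_{L}\cong(I_{L^{*}})^{*}$ and $\tilde{I}\cong I^{*}$ given by the pairing, a fixed $f\in(I_{L^{*}})^{*}$ corresponds to $\tilde{u}\in\tilde{I}_{L}$ and to $\tilde{u}-g(\tilde{u})\in\tilde{I}'_{L}$, and these map to one and the same functional in $I^{*}$; this is exactly the compatibility needed with the restriction map $\mathrm{Res}^{I^{*}}_{(I_{L^{*}})^{*}}$.

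Combining the two steps: pick $f\leftrightarrow\tilde{u}\in\tilde{I}_{L}$ and a lift $\ell\in L$ with $\tilde{I}$-component $\tilde{u}$; since $\ell=\big[(\ell-\tilde{u})+g(\tilde{u})\big]+\big[\tilde{u}-g(\tilde{u})\big]$ with the first bracket in $I^{\perp}_{L^{*}}$ and the second in $\tilde{I}'$, the $\tilde{I}'$-component of $\ell$ is $\tilde{u}-g(\tilde{u})$. Applying Step~1 for $\tilde{U}$ and for $\tilde{U}'$, and using that $q$ is the same map in both,
\[
(\iota-\iota')(f)=q(\ell-\tilde{u})-q\big(\ell-(\tilde{u}-g(\tilde{u}))\big)=q\big({-}g(\tilde{u})\big)=-(q\circ g)(\tilde{u}).
\]
Now $-q\circ g:\tilde{I}\to\Delta_{\Lambda}$ is a homomorphism, so transporting it along $\tilde{I}\cong I^{*}$ gives a homomorphism $I^{*}\to\Delta_{\Lambda}$ whose restriction to $(I_{L^{*}})^{*}$ is $\iota-\iota'$; the well-defined class then follows formally. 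I expect the main obstacle to be the observation that the discrepancy actually lies inside $I^{\perp}_{L^{*}}$ and not merely inside $U^{\perp}$ — equivalently that the ``reduction modulo $U^{\perp}$'' map between $\tilde{U}$ and $\tilde{U}'$, restricted to the integral lattice $\tilde{I}$, takes values in $L^{*}$ — since this is precisely what makes the intrinsic map $q$ applicable and forces $\iota-\iota'$ to extend from $(I_{L^{*}})^{*}$ to all of $I^{*}$; the remaining identifications are bookkeeping that must nonetheless be carried out carefully to match the restriction map in the statement.
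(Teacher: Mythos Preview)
Your proof is correct. The route is close in spirit to the paper's but is packaged more conceptually, and the difference is worth noting.

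The paper proceeds by explicit coordinates: it writes the new complement $\hat{I}$ in the $U\oplus\tilde{W}\oplus\tilde{U}$ decomposition as $\hat{u}=(\beta\tilde{u},\tilde{\varphi}\tilde{u},\tilde{u})$ for homomorphisms $\varphi:\tilde{I}\to\Lambda^{*}$ and $\beta:\tilde{I}\to U$ satisfying the constraint from Equation~\eqref{decomL*}, determines the new $\hat{W}$ explicitly, and then recomputes the decomposition of a generic element of $L$ in the new coordinates. This yields that the new $\iota$ is the old one minus $p\circ\varphi$, which is the restriction of a map on all of $\tilde{I}\cong I^{*}$. The paper then separately re-verifies the condition on the $U$-coordinate and the norm condition on the new $\iota$, to confirm that it really is the map produced by Theorem~\ref{latdecom} for $\hat{I}$.

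Your argument replaces this coordinate computation by the single observation that $\iota(\tilde{u})=q(\ell-\tilde{u})$ for the intrinsic map $q:I^{\perp}_{L^{*}}\to\Delta_{\Lambda}$ and any lift $\ell\in L$. This is equivalent data --- your $g(\tilde{u})$ is exactly $-\beta\tilde{u}-\tilde{\varphi}\tilde{u}$ in the paper's notation, and your $-q\circ g$ is their $p\circ\varphi$ --- but the intrinsic formulation automatically guarantees that what you compute for $\tilde{I}'$ is the genuine $\iota'$ of Theorem~\ref{latdecom}, so the paper's separate consistency checks (the $u$-coordinate and the norm condition) become unnecessary. The paper's explicit approach does buy the concrete description of $\hat{W}$ and the direct verification that $I^{\perp}_{L}=I\oplus\hat{\Lambda}$ persists, which may be reassuring but is not logically required for the proposition as stated.
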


\begin{proof}
Denote the new complementary lattice, which satisfies the condition of Lemma \ref{compoverZ} by definition, by $\hat{I}$, and set $\hat{U}=\hat{I}_{\mathbb{Q}}$. As $\hat{I} \subseteq L^{*}$ pairs with $I$ in a unimodular manner, decomposing $L^{*}$ as in Equation \eqref{decomL*} and projecting onto $\tilde{I}$ yields an isomorphism, whose inverse is a map $\tilde{I}\to\hat{I}$ that we write as $\tilde{u}\mapsto\hat{u}$ (i.e., given $\tilde{u}\in\tilde{I}$ we denote by $\hat{u}$ the unique element of $\hat{I}$ whose $\tilde{U}$-coordinate in Equation \eqref{decomL*} is $\tilde{u}$). We therefore have $\hat{I}=\{\hat{u}=(\beta\tilde{u},\tilde{\varphi}\tilde{u},\tilde{u})|\;\tilde{u}\in\tilde{U}\}$ in the coordinates of $V$ as $U\oplus\tilde{W}\oplus\tilde{U}$, for two homomorphisms
\begin{equation}
\varphi:\tilde{I}\to\Lambda^{*}\quad\mathrm{and}\quad\beta:\tilde{I} \to U,\quad\mathrm{with}\quad\beta\tilde{u}+I_{L^{*}}=-\iota^{*}p\varphi\tilde{u}\quad\mathrm{for\ every}\quad\tilde{u}\in\tilde{U}, \label{hatI}
\end{equation}
where $\tilde{\varphi}\tilde{u}$ is the image in $\tilde{\Lambda}^{*}$ of $\varphi\tilde{u}\in\Lambda^{*}$ as in Equation \eqref{tildeiso}. We extend $\varphi$ and $\tilde{\varphi}$ to maps \[\varphi:\tilde{U} \to W\mathrm{\ and\ }\tilde{\varphi}:\tilde{U}\to\tilde{W},\quad\mathrm{with\ duals}\quad\varphi^{*}:W \to U\mathrm{\ and\ }\tilde{\varphi}^{*}:\tilde{W} \to U,\] and recall that when using $\hat{I}$ and $\hat{U}$ and the corresponding Equation \eqref{Wtilde}, we consider \[\Lambda\subseteq\Lambda^{*} \subseteq W\mathrm{\ via\ their\ images\ }\hat{\Lambda}\subseteq\hat{\Lambda}^{*}\subseteq\hat{W}=(U+\hat{U})^{\perp},\mathrm{\ with\ }\hat{p}:\hat{\Lambda}^{*}\to\Delta_{\Lambda}.\] For determining the latter space, we take $w\in\tilde{W}$, and since \[(w,\hat{u})=\big(w,\tilde{\varphi}\tilde{u}\big)=\big(\tilde{\varphi}^{*}w,\tilde{u}\big)=\big(\tilde{\varphi}^{*}w,\hat{u}\big)\quad(\mathrm{as\ }\tilde{\varphi}^{*}w \in U\mathrm{\ and\ }\hat{u}-\tilde{u} \in U^{\perp}),\] we deduce that \[\hat{W}=\{\hat{w}=(-\tilde{\varphi}^{*}w,w,0)|\;w\in\tilde{W}\},\mathrm{\ \ as\ well\ as\ \ \ }\hat{\varphi}\tilde{u}=(-\tilde{\varphi}^{*}\tilde{\varphi}\tilde{u},\tilde{\varphi}\tilde{u},0)\mathrm{\ for\ }\tilde{u}\in\tilde{U}\] in the same coordinates, for the corresponding map $\hat{\varphi}:\tilde{U}\to\hat{W}$.

For the effect on $\iota$, consider a sum $u+w+\tilde{u} \in L$ as above, and write \[\tilde{u}=\hat{u}-\tilde{\varphi}\tilde{u}-\beta\tilde{u},\quad\mathrm{ as\ well\ as}\quad w=\hat{w}+\tilde{\varphi}^{*}w\mathrm{\ and\ }\tilde{\varphi}\tilde{u}=\hat{\varphi}\tilde{u}+\tilde{\varphi}^{*}\tilde{\varphi}\tilde{u}.\] This changes the class $\iota\tilde{u}=\tilde{p}w=\hat{p}\hat{w}$ to $\tilde{p}(w-\tilde{\varphi}\tilde{u})=\hat{p}(\hat{w}-\hat{\varphi}\hat{u})$ (where we make the abuse of notation of writing $\hat{\varphi}$ also for the map from $\hat{U}$ to $\hat{W}$, which takes the image $\hat{u}\in\hat{U}$ of $\tilde{u}\in\tilde{U}$ to $\hat{\varphi}\tilde{u}$), and therefore subtracts from $\iota$ the restriction to $\tilde{I}_{L}\cong(I_{L^{*}})^{*}$ of the composition $p\varphi$ on $\tilde{I} \cong I^{*}$. As for the condition on $u$ in Equation \eqref{decomL}, we express it via Lemma \ref{nonisocor} and the duality between $I$ and $\tilde{I}$ as the condition that
$(u,\tilde{v})\in-(\tilde{u},\tilde{v})+\mathbb{Z}$ for every $\tilde{v}\in\tilde{I}$, and considering the modified value of $u$ we have to prove that \[(u-\beta\tilde{u}+\tilde{\varphi}^{*}w-\tilde{\varphi}^{*}\tilde{\varphi}\tilde{u},\hat{v})\in-(\hat{u},\hat{v})+\mathbb{Z}\quad\mathrm{for\ every\ }\quad\hat{v}\in\hat{I}.\] We can replace $\hat{v}$ by $\tilde{v}$ on the left hand side (since their difference is in $U^{\perp}$), and using duality, what we know about $(u,\tilde{v})$, and the direct evaluation of $(\hat{u},\hat{v})$, this side gives an element of \[-(\tilde{u},\tilde{v})-(\beta\tilde{u},\tilde{v})+(w,\varphi\tilde{v})-(\varphi\tilde{u},\varphi\tilde{v})+\mathbb{Z}=-(\hat{u},\hat{v})+(w,\varphi\tilde{v})+(\tilde{u},\beta\tilde{v})+\mathbb{Z}.\] But $\varphi\tilde{v}\in\Lambda^{*}$, and we have $\tilde{p}w=\iota\tilde{u}$ by Equation \eqref{decomL}, so that the image of second term on the right hand side in $\mathbb{Q}/\mathbb{Z}$ is $(\iota\tilde{u},p\varphi\tilde{v})$. On the other hand, the fact that $\tilde{u}\in\tilde{I}_{L}$ implies that $(\tilde{u},I_{L^{*}})\subseteq\mathbb{Z}$, and when we take the image of the third term there in $\mathbb{Q}/\mathbb{Z}$ then the expression for $\beta\tilde{v}+I_{L^{*}}$ in Equation \eqref{hatI} transforms this element into $-(\tilde{u},\iota^{*}p\varphi\tilde{v})$. As this cancels with the previous expression, the right hand side is $-(\hat{u},\hat{v})+\mathbb{Z}$ as desired. Similar considerations show that the $\mathbb{Q}/\mathbb{Z}$-image of $\frac{\hat{u}^{2}}{2}$ coincides with $\frac{[(\iota-p\varphi)\tilde{u}]^{2}}{2}$ when $\hat{u}$ is associated with $\tilde{u}\in\tilde{I}_{L}$ as above, and as passing through the isomorphisms $\tilde{I}_{L}\cong(I_{L^{*}})^{*}\cong\hat{I}_{L}$ (with $\hat{I}_{L}$ defined as in Equation \eqref{tildeIL} with $\tilde{I}$ replaced by $\hat{I}$) allows us to write the latter expression as $\frac{[(\iota-p\varphi)\hat{u}]^{2}}{2}$, this verifies the norm condition from Theorem \ref{latdecom}. Note that from $\varphi(\tilde{I})\subseteq\Lambda^{*}$ we obtain $\varphi^{*}(\Lambda) \subseteq I$ by dualizing and hence $\tilde{\varphi}^{*}(\tilde{\Lambda}) \subseteq I$, so that the form of $I^{\perp}_{L}$ as $I\oplus\hat{\Lambda}$ as in Equation \eqref{IperpL} is also preserved. This establishes the first assertion, from which the second one directly follows. This proves the Proposition.
\end{proof}
Recall that $\Lambda^{*}$ and $I^{\perp}_{L^{*}}$ are torsion-free, and the projection $p:\Lambda^{*}\to\Delta_{\Lambda}$ and the map from $I^{\perp}_{L^{*}}$ to $\Lambda^{*}$ are surjective. Hence homomorphisms from $\tilde{I}$ to $\Delta_{\Lambda}$ can always be lifted to maps $\varphi:\tilde{I}\to\Lambda^{*}$, and then to maps from $\tilde{I}$ to $I^{\perp}_{L^{*}}$ with an appropriate homomorphism $\beta$ as in Equation \eqref{hatI}. Therefore no finer invariant can be associated to $U$ and $I$ themselves in Proposition \ref{choicecomp}. In particular, a necessary and sufficient condition for the existence of a convenient complement $\hat{U}$ for which the associated map $\iota$ will vanish, so that Equations \eqref{decomL} and \eqref{decomL*} will take the simpler form appearing in Equation \eqref{simpdecom}, is that the class from Proposition \ref{choicecomp} be trivial.

\section{Intersections with Arithmetic Subgroups \label{ArithSbgp}}

This section proves the conditions for elements of $\mathcal{P}_{U}$ to be in $\Gamma_{L,I}$, and deduces some consequences under simplifying assumptions. The main result of this paper is essentially Theorem \ref{sdprodZ}.

\medskip

First, the fact that the group $\Gamma_{L,I}$ was defined to be contained in $\mathcal{P}_{U}^{0}$ means that in Equation \eqref{PUgenform} we consider only elements $A\in\mathcal{P}_{U}$ that are represented by parameters $M\in\mathrm{GL}(U)$ with positive determinant, $\gamma\in\mathrm{O}(W)\cap\mathrm{SO}^{+}(W_{\mathbb{R}})$, $\psi\in\mathrm{Hom}_{\mathbb{Q}}(W,U)$, and $\eta\in\mathrm{Hom}_{\mathbb{Q}}^{as}(U^{*},U)$. In addition, we recall that if an element $M\in\mathrm{GL}(U)$ sends $I$ onto itself and $\det M>0$ then $M$ is in the group $\mathrm{SL}(I)$ of $\mathbb{Z}$-automorphisms of $I$ with determinant 1. Moreover, for any lattice $I \subseteq J \subseteq U$ (such as $J=I_{L^{*}}$) we define
\begin{equation}
\mathrm{SL}(J,I):=\big\{M\in\mathrm{SL}(I)\big|\;\forall u \in J, u-Mu \in I\big\}\subseteq\mathrm{SL}(I)\cap\mathrm{SL}(J). \label{SLJIdef}
\end{equation}
It follows that for every $M\in\mathrm{SL}(J,I)$, the map $\mathrm{Id}_{U}-M$ induces a well-defined map from $U/J$ to $U/I$. In addition, we have the group $\Gamma_{\Lambda}\subseteq\mathrm{SO}^{+}(W)$ defined in analogy to $\Gamma_{L}\subseteq\mathrm{SO}^{+}(V)$, and since our coordinates are already based on the choice of a complement $\tilde{U}=\tilde{I}_{\mathbb{Q}}$ as above, the map $\iota:\tilde{I}_{L}\to\Delta_{\Lambda}$ from Theorem \ref{latdecom}, as well as its dual $\iota^{*}:\Delta_{\Lambda} \to U/I_{L^{*}}$, can be considered as given.

For proving the condition characterizing $\Gamma_{L,I}$ we shall use the following technical lemma.
\begin{lem}
Let $B$ be a symmetric bilinear form on $\tilde{I}$, with values in $\mathbb{Q}$, which we view as a symmetric element of $\mathrm{Hom}(\tilde{I},U)=\mathrm{Hom}_{\mathbb{Q}}(\tilde{U},U)$ via the isomorphism between $\tilde{U} \cong U^{*}$ from Equation \eqref{Wtilde}. Then there exists $\tilde{\kappa}\in\mathrm{Hom}(\tilde{I},I)$ such that $B-\tilde{\kappa}$ is anti-symmetric if and only if $B(\tilde{u},\tilde{u})\in\mathbb{Z}$ for every $\tilde{u}\in\tilde{I}$, and in this situation the element $\tilde{\kappa}$, and with it the anti-symmetric map $B-\tilde{\kappa}$, is unique up to $\mathrm{Hom}^{as}(\tilde{I},I)$. \label{symas}
\end{lem}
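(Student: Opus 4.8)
The plan is to argue in the style of the proof of Lemma \ref{nonisocor}. First I would observe that a symmetric $\mathbb{Q}$-valued bilinear form $B$ on $\tilde{I}$, viewed as an element $B\in\mathrm{Hom}_{\mathbb{Q}}(\tilde{U},U)$ via the duality $\tilde{U}\cong U^{*}$ of Equation \eqref{Wtilde}, decomposes relative to a candidate correction $\tilde{\kappa}\in\mathrm{Hom}(\tilde{I},I)$ as $B=\tilde{\kappa}+(B-\tilde{\kappa})$, and that the pairing $(\tilde{u},\tilde{v})\mapsto(B-\tilde{\kappa})(\tilde{u},\tilde{v})$ (abusing notation to write both the map and the associated form) is anti-symmetric precisely when the symmetrization of $B-\tilde{\kappa}$ vanishes. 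Writing $S(\tilde\kappa)$ for the symmetric part $\tfrac{\tilde\kappa+\tilde\kappa^{*}}{2}$ of $\tilde\kappa$ (where $\tilde\kappa^{*}$ is the transpose via the same duality), the condition becomes $S(\tilde\kappa)=B$ as symmetric forms; so the existence of $\tilde\kappa$ with $B-\tilde\kappa$ anti-symmetric is equivalent to $B$ lying in the image of the symmetrization map $\mathrm{Hom}(\tilde{I},I)\to\mathrm{Sym}^2(\tilde{I})^{*}_{\mathbb{Q}}$ (the rational-valued symmetric forms on $\tilde I$).

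The heart of the matter is then a pure statement about the integral lattice $\tilde I$: a $\mathbb{Q}$-valued symmetric bilinear form $B$ on $\tilde I$ is the symmetrization of some $\mathbb{Z}$-valued (not necessarily symmetric) bilinear form on $\tilde I$ if and only if its diagonal values $B(\tilde u,\tilde u)$ are all integral. For the ``only if'' direction, note that if $B=\tfrac{\tilde\kappa+\tilde\kappa^{*}}{2}$ with $\tilde\kappa\in\mathrm{Hom}(\tilde I,I)$ then $B(\tilde u,\tilde u)=(\tilde\kappa\tilde u,\tilde u)$, and the latter is in $\mathbb{Z}$ because $\tilde\kappa\tilde u\in I$ pairs integrally with $\tilde u\in\tilde I$ (using the unimodularity of the $I$--$\tilde I$ pairing from Lemma \ref{compoverZ}). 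For the ``if'' direction I would fix a $\mathbb{Z}$-basis $e_1,\dots,e_r$ of $\tilde I$, let $g_{ij}=B(e_i,e_j)=g_{ji}$, and define $\tilde\kappa$ by the matrix $(k_{ij})$ with $k_{ii}=g_{ii}$, $k_{ij}=g_{ij}+g_{ji}=2g_{ij}$ for $i<j$, and $k_{ij}=0$ for $i>j$ (an upper-triangular choice). The hypothesis $g_{ii}\in\mathbb{Z}$ makes all diagonal entries integral, while the off-diagonal entries $2g_{ij}=g_{ij}+g_{ji}=(e_i+e_j)\cdot(e_i+e_j)\text{-type expression}$ are integral because the polarization identity gives $2g_{ij}=B(e_i+e_j,e_i+e_j)-B(e_i,e_i)-B(e_j,e_j)$, each summand of which lies in $\mathbb{Z}$ by hypothesis. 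Then $\tilde\kappa$ lands in $\mathrm{Hom}(\tilde I,I)$ (here I use the duality $U\cong\tilde U^{*}$ to identify the matrix of $\tilde\kappa$, acting $\tilde I\to I$, with the matrix $(k_{ij})$ read against the dual basis), and one checks $\tfrac{k_{ij}+k_{ji}}{2}=g_{ij}$ in all cases, so $B-\tilde\kappa$ is anti-symmetric.

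Finally, for the uniqueness clause: if $\tilde\kappa_1,\tilde\kappa_2\in\mathrm{Hom}(\tilde I,I)$ both make $B-\tilde\kappa_j$ anti-symmetric, then $\tilde\kappa_1-\tilde\kappa_2\in\mathrm{Hom}(\tilde I,I)$ has the property that $(B-\tilde\kappa_2)-(B-\tilde\kappa_1)=\tilde\kappa_1-\tilde\kappa_2$ is a difference of two anti-symmetric forms, hence anti-symmetric; so $\tilde\kappa_1-\tilde\kappa_2\in\mathrm{Hom}^{as}(\tilde I,I)$, which is precisely the asserted ambiguity, and conversely adding any element of $\mathrm{Hom}^{as}(\tilde I,I)$ to $\tilde\kappa$ preserves the anti-symmetry of $B-\tilde\kappa$. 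The main obstacle I anticipate is purely bookkeeping: keeping the two distinct transposes straight — the ``abstract'' interchange of the two bilinear-form variables versus the linear-algebra transpose of a map $\tilde U\to U$ taken through the identifications $\tilde U\cong U^{*}$ and $U\cong\tilde U^{*}$ of Equation \eqref{Wtilde} — and making sure that ``symmetric/anti-symmetric as a map'' matches ``symmetric/anti-symmetric as a form'' consistently, exactly as in the discussion preceding Definition \ref{Heisdef}. Once that dictionary is fixed, every step is a one-line verification.
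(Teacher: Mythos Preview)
Your proof is correct and follows essentially the same approach as the paper: the ``only if'' direction via $B(\tilde u,\tilde u)=(\tilde\kappa\tilde u,\tilde u)\in\mathbb{Z}$, the ``if'' direction by choosing a $\mathbb{Z}$-basis and using polarization to see that the off-diagonal Gram entries lie in $\tfrac{1}{2}\mathbb{Z}$, then exhibiting an integral matrix $\tilde\kappa$ whose subtraction leaves an anti-symmetric remainder, and the uniqueness by observing that the difference of two such $\tilde\kappa$ is integral and anti-symmetric. Your explicit upper-triangular choice of $\tilde\kappa$ and the preliminary reformulation via the symmetrization map $\tilde\kappa\mapsto\tfrac{\tilde\kappa+\tilde\kappa^{*}}{2}$ are minor elaborations on the paper's argument rather than a different route.
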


\begin{proof}
If $B-\tilde{\kappa}$ is anti-symmetric then when we must have \[B(\tilde{u},\tilde{u})=(B\tilde{u},\tilde{u})=(\kappa\tilde{u},\tilde{u})+\big((B-\kappa)\tilde{u},\tilde{u})=(\kappa\tilde{u},\tilde{u})\in\mathbb{Z}\] for every $\tilde{u}\in\tilde{I}$, by anti-symmetry and the fact that $\tilde{\kappa}(\tilde{I}) \subseteq I$ and $(\tilde{I},I)\subseteq\mathbb{Z}$. On the other hand, the usual relation between quadratic and bilinear forms implies that $B(\tilde{u},\tilde{v})\in\frac{1}{2}\mathbb{Z}$ for every $\tilde{u}$ and $\tilde{v}$ in $\tilde{I}$, so that $B$ is a symmetric element of $\mathrm{Hom}\big(\tilde{I},\frac{1}{2}I\big)$. We can then take a basis for $\tilde{I}$ over $\mathbb{Z}$ and the dual basis for $I$, so that the matrix representing $B$ as an element of $\mathrm{Hom}\big(\tilde{I},\frac{1}{2}I\big)$ is the Gram matrix of $B$ as a bilinear form in this basis of $\tilde{I}$, which is therefore symmetric and has half-integral entries but integral diagonal entries. Since in this presentation elements of $\mathrm{Hom}(\tilde{I},I)$ are precisely those that are represented by integral matrices, and from matrix of the form thus described we can subtract an integral matrix and obtain an anti-symmetric result (which will represent an anti-symmetric homomorphism), this proves the other direction. Now, it is clear that altering $\tilde{\kappa}$ by an element of $\mathrm{Hom}^{as}(\tilde{I},I)$ does not affect the anti-symmetry of $B-\tilde{\kappa}$. On the other hand, if two such homomorphisms have the desired properties then their difference must both be in $\mathrm{Hom}(\tilde{I},I)$ and be anti-symmetric, and it is therefore in $\mathrm{Hom}^{as}(\tilde{I},I)$. This proves the lemma.
\end{proof}
It follows from Lemma \ref{symas} that when the condition on $B$ is satisfied, the set of the possible resulting anti-symmetric maps form a coset
\begin{equation}
c_{B}\!:=\!\big\{\eta\in\mathrm{Hom}^{as}\big(I^{*},\tfrac{1}{2}I\big)\big|\;B-\tilde{\eta}\in\mathrm{Hom}(\tilde{I},I)\big\}\!\in\!\mathrm{Hom}^{as}\big(I^{*},\tfrac{1}{2}I\big)\big/\mathrm{Hom}^{as}(I^{*},I), \label{coset}
\end{equation}
under the map $\eta\mapsto\tilde{\eta}$ from Equation \eqref{tildeiso}.

\smallskip

The characterization of $\Gamma_{L,I}$ in these coordinates can now be determined.
\begin{prop}
The element $A\in\mathcal{P}_{U}^{0}$ that is associated with the parameters $(M,\gamma,\psi,\eta)$ from above lies in $\Gamma_{L,I}$ if and only if the following conditions are satisfied:
\begin{enumerate}[$(i)$]
\item $\gamma\in\Gamma_{\Lambda}$.
\item $M$ is in $\mathrm{SL}(I_{L^{*}},I)$. In particular, $\mathrm{Id}_{U}-M:U/I_{L^{*}} \to U/I$ is well-defined.
\item $\psi(\Lambda) \subseteq I$, and the induced map from $\Delta_{\Lambda}$ to $U/I$ equals the composition of $\iota^{*}:\Delta_{\Lambda} \to U/I_{L^{*}}$ from Theorem \ref{latdecom} with $\mathrm{Id}_{U}-M:U/I_{L^{*}} \to U/I$ from part $(ii)$.
\item $\tilde{\eta}\in\mathrm{Hom}_{\mathbb{Q}}^{as}(\tilde{U},U)$ is $(\mathrm{Id}_{U}-M)\alpha\big(\mathrm{Id}_{\tilde{U}}-\widetilde{M^{*}}\big)+M\alpha-\alpha\widetilde{M^{*}}-\frac{\tilde{\psi}\widetilde{\psi^{*}}}{2}-\tilde{\kappa}$ for some $\kappa\in\mathrm{Hom}(I^{*},I)$.
\end{enumerate}
Moreover, $\gamma$ in condition $(i)$ can be arbitrary, for every $M\in\mathrm{SL}(I_{L^{*}},I)$ as in condition $(ii)$ there is some $\psi\in\mathrm{Hom}_{\mathbb{Q}}(W,U)$ satisfying condition $(iii)$, and for every such $M$ and $\psi$ there exists $\eta\in\mathrm{Hom}_{\mathbb{Q}}^{as}(U^{*},U)$ for which condition $(iv)$ is fulfilled (and then $\eta$ is unique up to $\mathrm{Hom}^{as}(I^{*},I)$). \label{paragrpZ}
\end{prop}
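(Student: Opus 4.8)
The plan is to reduce membership in $\Gamma_{L,I}$ to a condition on $L$ and $L^{*}$ that can be tested on generators, and then to match each of the three "blocks" of the decomposition in Theorem~\ref{latdecom} with one of the four conditions. The first step is the reduction: since $\Gamma_{L,I}=\mathcal{P}_{U}^{0}\cap\Gamma_{L}$ and an element $A\in\mathcal{P}_{U}^{0}$ already lies in $\mathrm{SO}^{+}(V_{\mathbb{R}})$ and stabilizes $U$, the element $A$ lies in $\Gamma_{L,I}$ if and only if $A\mu-\mu\in L$ for every $\mu\in L^{*}$. Indeed this single requirement forces $A(L^{*})\subseteq L^{*}$ (hence $=L^{*}$, $A$ being an isometry), then, applied to $\mu\in L\subseteq L^{*}$, forces $A(L)\subseteq L$ (hence $=L$), and it is exactly the statement that $A$ acts trivially on $\Delta_{L}=L^{*}/L$. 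As $\mu\mapsto A\mu-\mu$ is additive, I would test it on a generating set of $L^{*}$, and Theorem~\ref{latdecom} gives a convenient one: $(\mathrm a)$ the elements of $I_{L^{*}}\subseteq U$; $(\mathrm b)$ for $w$ ranging over generators of $\tilde{\Lambda}^{*}$, elements $\mu_{w}=u_{w}+w\in I^{\perp}_{L^{*}}$ with $u_{w}+I_{L^{*}}=-\iota^{*}\tilde{p}w$ as in \eqref{decomL*}; and $(\mathrm c)$ the elements of $\tilde{I}\subseteq L^{*}$. Evaluating $A$ on each type via \eqref{PUgenform} and reading off the membership criterion \eqref{decomL} yields the four conditions.

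For the forward implication, assume $A\in\Gamma_{L,I}$. Then $A$ preserves $I,I_{L^{*}},I^{\perp}_{L},I^{\perp}_{L^{*}}$ and is trivial on $\Delta_{L}$; restricting to $U$ and using $\det M>0$ gives $M\in\mathrm{SL}(I)$ preserving $I_{L^{*}}$, and triviality on $H_{I}\cong I_{L^{*}}/I$ forces $Mu-u\in U\cap L=I$ for $u\in I_{L^{*}}$, which is condition $(ii)$. The action on $\Lambda=I^{\perp}_{L}/I$ and $\Lambda^{*}=I^{\perp}_{L^{*}}/I_{L^{*}}$ is by $\gamma$, and triviality on $\Delta_{\Lambda}\cong H_{I}^{\perp}/H_{I}$ together with $\gamma\in\mathrm{SO}^{+}(W_{\mathbb{R}})$ gives $\gamma\in\Gamma_{\Lambda}$ — condition $(i)$. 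Testing $A\ell-\ell\in L$ on $\ell\in\tilde{\Lambda}\subseteq L$ gives $\tilde{\psi}\tilde{\gamma}\ell\in I$, i.e. $\psi(\Lambda)\subseteq I$; then on the type-$(\mathrm b)$ generators, reducing the $U$-component modulo $I$ and using $u_{w}\equiv-\iota^{*}\tilde{p}w$ and $\tilde{\psi}\tilde{\gamma}w\equiv\bar{\psi}(\tilde{p}w)$ (legitimate since $\gamma$ is trivial on $\Delta_{\Lambda}$) produces exactly $\bar{\psi}=(\mathrm{Id}_{U}-M)\circ\iota^{*}$ — condition $(iii)$. Finally, on a type-$(\mathrm c)$ generator $\tilde{u}$, reindexed by $\tilde{v}=\widetilde{M^{-*}}\tilde{u}$ (which converts the $\widetilde{M^{-*}}$'s of \eqref{PUgenform} into the $\widetilde{M^{*}}$'s of the statement), the $\tilde{U}$-component of $A\tilde{u}-\tilde{u}$ is $(\mathrm{Id}-\widetilde{M^{*}})\tilde{v}$, which lies in $\tilde{I}_{L}$ automatically because $(\mathrm{Id}-M)(I_{L^{*}})\subseteq I$ dualizes to $(\mathrm{Id}-\widetilde{M^{*}})(\tilde{I})\subseteq\tilde{I}_{L}$; its $\tilde{W}$-component is $\widetilde{\psi^{*}}\tilde{v}\in\tilde{\Lambda}^{*}$, and the required identity $\tilde{p}\widetilde{\psi^{*}}\tilde{v}=\iota\big((\mathrm{Id}-\widetilde{M^{*}})\tilde{v}\big)$ is the $\mathbb{Q}/\mathbb{Z}$-dual of $(iii)$ (pair with $\delta\in\Delta_{\Lambda}$, move $\widetilde{\psi^{*}}$ across to $\tilde{\psi}$, insert $(iii)$, move $\mathrm{Id}-M$ across to $\mathrm{Id}-\widetilde{M^{*}}$, and use $(\iota^{*}\delta,\tilde{w})=(\delta,\iota\tilde{w})$ together with non-degeneracy of the form on $\Delta_{\Lambda}$). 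What is left of the membership criterion is that the $U$-component of $A\tilde{u}-\tilde{u}$ lie in $-2\alpha(\mathrm{Id}-\widetilde{M^{*}})\tilde{v}+I$, and a direct rearrangement turns this into $(B-\tilde{\eta})\tilde{v}\in I$ for all $\tilde{v}\in\tilde{I}$, where $B:=(\mathrm{Id}_{U}-M)\alpha(\mathrm{Id}_{\tilde{U}}-\widetilde{M^{*}})+M\alpha-\alpha\widetilde{M^{*}}-\tfrac{\tilde{\psi}\widetilde{\psi^{*}}}{2}$, i.e. $B-\tilde{\eta}\in\mathrm{Hom}(\tilde{I},I)$ — which is condition $(iv)$ with $\tilde{\kappa}=B-\tilde{\eta}$. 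The converse implication runs these computations backwards: granting $(i)$–$(iv)$, I check $A\mu-\mu\in L$ on the three generator types, using for type $(\mathrm c)$ that $(iii)$ supplies the $\tilde{W}$-component identity (via the dual computation above) and $(iv)$ supplies the $U$-component condition.

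For the supplementary assertions: $\gamma\in\Gamma_{\Lambda}$ being unconstrained follows by taking $(M,\psi,\eta)=(\mathrm{Id},0,0)$, which satisfies $(ii)$–$(iv)$ trivially (with $\kappa=0$). Given $M\in\mathrm{SL}(I_{L^{*}},I)$, a $\psi$ with property $(iii)$ is obtained by lifting the homomorphism $(\mathrm{Id}_{U}-M)\circ\iota^{*}:\Delta_{\Lambda}\to U/I$ first to a homomorphism $\Lambda^{*}\to U$ — possible since $\Lambda^{*}$ is free and $U\to U/I$ is onto, and the lift automatically carries $\Lambda$ into $I$ because $(\mathrm{Id}-M)\iota^{*}$ kills $p(\Lambda)=0$ — and then extending $\mathbb{Q}$-linearly to $W$. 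Given such $M$ and $\psi$, the map $B$ is a $\mathbb{Q}$-valued bilinear form on $\tilde{I}$ whose diagonal, using symmetry of the form on $V$, equals $B(\tilde{v},\tilde{v})=\tfrac{((\mathrm{Id}-\widetilde{M^{*}})\tilde{v})^{2}}{2}-\tfrac{(\widetilde{\psi^{*}}\tilde{v})^{2}}{2}$; the first term is $\equiv\tfrac{(\iota(\mathrm{Id}-\widetilde{M^{*}})\tilde{v})^{2}}{2}$ mod $\mathbb{Z}$ by the norm property of $\iota$ in Theorem~\ref{latdecom}, the second is $\equiv\tfrac{(\tilde{p}\widetilde{\psi^{*}}\tilde{v})^{2}}{2}$ mod $\mathbb{Z}$, and the two coincide by the $\tilde{p}$-identity coming from $(iii)$, so $B(\tilde{v},\tilde{v})\in\mathbb{Z}$. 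Applying Lemma~\ref{symas} to the symmetric part of $B$ then yields $\kappa\in\mathrm{Hom}(I^{*},I)$ with $B-\tilde{\kappa}$ anti-symmetric, so a suitable $\eta\in\mathrm{Hom}^{as}_{\mathbb{Q}}(U^{*},U)$ exists, unique up to $\mathrm{Hom}^{as}(I^{*},I)$ by the uniqueness clause of that lemma.

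The main obstacle I anticipate is the type-$(\mathrm c)$ analysis, specifically two things: correctly tracking the $\tilde{U}$-, $\tilde{W}$- and $U$-components of $A\tilde{u}$ through the identifications \eqref{Wtilde}–\eqref{tildeiso} and the reindexing $\tilde{v}=\widetilde{M^{-*}}\tilde{u}$ that converts $\widetilde{M^{-*}}$ to $\widetilde{M^{*}}$; and, above all, establishing that condition $(iii)$ is equivalent to the identity $\tilde{p}\widetilde{\psi^{*}}\tilde{v}=\iota\big((\mathrm{Id}-\widetilde{M^{*}})\tilde{v}\big)$, which demands careful $\mathbb{Q}/\mathbb{Z}$-dualization and the precise definitions of $\iota^{*}$ and of the discriminant form on $\Delta_{\Lambda}$. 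The remaining steps are routine, if lengthy, matching of the decomposition \eqref{decomL}–\eqref{decomL*} against the four conditions.
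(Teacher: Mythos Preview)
Your proposal is correct and follows essentially the same route as the paper: reduce to testing $A\mu-\mu\in L$ on the two summands $I^{\perp}_{L^{*}}$ and $\tilde{I}$ of $L^{*}$ from Theorem~\ref{latdecom}, extract conditions $(i)$--$(iii)$ from the first and condition $(iv)$ from the second via the same reindexing $\tilde{v}=\widetilde{M^{-*}}\tilde{u}$, and establish existence of $\eta$ by checking integrality of the diagonal of the relevant form via the norm property of $\iota$ and then invoking Lemma~\ref{symas}. The only cosmetic difference is that the paper first peels off the anti-symmetric summand $M\alpha-\alpha\widetilde{M^{*}}$ before applying Lemma~\ref{symas}, whereas you apply that lemma to the symmetric part of the full $B$; these are equivalent.
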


\begin{proof}
First, if $AL=L$ and $AU=U$ then $M=A\big|_{U}$ yields a $\mathbb{Z}$-automorphism of $I=U \cap L$, and if $\det M>0$ then $M\in\mathrm{SL}(I)$. Now, the condition $A\in\Gamma_{L}$ means that $A\lambda-\lambda \in L$ for every $\lambda \in L^{*}$, and we evaluate this difference for the two parts $\tilde{I}$ and $I^{\perp}_{L^{*}}$ of $L^{*}$ (see Theorem \ref{latdecom}). Consider first an element $\lambda \in I^{\perp}_{L^{*}}$, for which by Equations \eqref{decomL*} and \eqref{IperpL} we get $\lambda=u+w$ for \[w\in\tilde{\Lambda}^{*}\mathrm{\ and\ }u \in U\mathrm{\ with\ }u+I_{L^{*}}=-\iota^{*}\tilde{p}w,\mathrm{\ and\ then\ }A\lambda-\lambda \in I^{\perp}_{L}=I\oplus\tilde{\Lambda}.\] Since this holds for every such $u$ and $w$, it follows that
\begin{equation}
\tilde{\gamma}w-w\in\tilde{\Lambda}\ \mathrm{for\ any}\ w\in\tilde{\Lambda}^{*},\quad\mathrm{and}\quad Mu-u-\tilde{\psi}\tilde{\gamma}w \in I\ \mathrm{when}\ u+I_{L^{*}}=-\iota^{*}\tilde{p}w. \label{GammaLIperp}
\end{equation}
Condition $(i)$ thus immediately follows, and by taking $w=0$ (hence $u \in I_{L^{*}}$) in Equation \eqref{GammaLIperp} we deduce condition $(ii)$ as well. Now let $w$ be any element of $\tilde{\Lambda}$, which we write as $\tilde{\gamma}^{-1}v$ for $v\in\tilde{\Lambda}$, and since $\tilde{p}w=0$ we know that  $u \in I_{L^{*}}$ once again. Since $Mu-u \in I$ by condition $(ii)$, Equation \eqref{GammaLIperp} yields $\tilde{\psi}v \in I$ as well, proving the first part of condition $(iii)$. Considering $\psi\big|_{\Lambda^{*}}:\Lambda^{*} \to U$, the condition $\psi(\Lambda) \subseteq I$ produces a map $\psi^{\Delta}:\Delta_{\Lambda} \to U/I$, and then condition $(i)$ and Equation \eqref{GammaLIperp} imply that \[\psi^{\Delta}\tilde{p}w=\psi^{\Delta}(w+\tilde{\Lambda})=\psi^{\Delta}(\tilde{\gamma}w+\tilde{\Lambda})=Mu-u+I=(M-\mathrm{Id}_{U})u+I \in U/I.\] As condition $(ii)$ shows that the right hand side depends only on the image of $u$ in $U/I_{L^{*}}$, which was seen to be $-\iota^{*}\tilde{p}w$, we deduce that $\psi^{\Delta}=(\mathrm{Id}_{U}-M)\iota^{*}$, establishing condition $(iii)$.

Now, dualizing condition $(ii)$ and the isomorphisms $\tilde{I} \cong I^{*}$ and $\tilde{I}_{L}\cong(I_{L^{*}})^{*}$ imply that $\widetilde{M^{*}}$ and $\widetilde{M^{-*}}$ preserve both $\tilde{I}$ and $\tilde{I}_{L}$, and that we have \[\big(\widetilde{M^{*}}-\mathrm{Id}_{\tilde{U}}\big)(\tilde{I})\subseteq\tilde{I}_{L}\quad\mathrm{and}\quad\big(\widetilde{M^{-*}}-\mathrm{Id}_{\tilde{U}}\big)(\tilde{I})\subseteq\tilde{I}_{L}.\] Hence when we take $\lambda\in\tilde{I}$ and subtract it from the expression for $A\lambda$ in Equation \eqref{PUgenform}, the part $\tilde{u}=\widetilde{M^{-*}}\lambda-\lambda$ lies in $\tilde{I}_{L}$, as Equation \eqref{decomL} demands. Dualizing condition $(iii)$ via these isomorphisms implies that \[\widetilde{\psi^{*}}(\tilde{I})\subseteq\tilde{\Lambda}^{*},\quad\mathrm{and}\quad\tilde{p}\widetilde{\psi^{*}}:\tilde{I}\to\Delta_{\Lambda}\mathrm{\ equals\ }(\iota:\tilde{I}_{L}\to\Delta_{\Lambda})\circ(\mathrm{Id}_{\tilde{U}}-\widetilde{M^{*}}:\tilde{I}\to\tilde{I}_{L}).\] Since $\mu:=\widetilde{M^{-*}}\lambda\in\tilde{I}$, the second part $w=\widetilde{\psi^{*}}\mu=\widetilde{\psi^{*}}\widetilde{M^{-*}}\lambda$ from Equation \eqref{PUgenform} lies in $\tilde{\Lambda}^{*}$, and we also obtain the equality \[w+\tilde{\Lambda}=\tilde{p}w=\tilde{p}\widetilde{\psi^{*}}\mu=\tilde{p}\widetilde{\psi^{*}}\widetilde{M^{-*}}\lambda=\iota\big(\widetilde{M^{-*}}\lambda-\lambda\big)=\iota\tilde{u},\] as Equation \eqref{decomL} requires. Therefore, given conditions $(ii)$ and $(iii)$, the only additional requirement that remains in Equation \eqref{decomL} is the last one. Substituting the term for $u$ in Equation \eqref{PUgenform} in that condition reduces us to verify that the sum of \[M\alpha\lambda-\alpha\widetilde{M^{-*}}\lambda-\tfrac{\tilde{\psi}\widetilde{\psi^{*}}\widetilde{M^{-*}}\lambda}{2}-\tilde{\eta}\widetilde{M^{-*}}\lambda\quad\mathrm{and}\quad
2\alpha\tilde{u}=2\alpha\big(\widetilde{M^{-*}}\lambda-\lambda\big)\quad\mathrm{is\ in\ }I.\] Writing this in terms of $\mu=\widetilde{M^{-*}}\lambda$, and recalling the relation between $\kappa$ and $\eta$ in condition $(iv)$ (and with it the relation between the resulting maps $\tilde{\kappa}$ and $\tilde{\eta}$ as in Equation \eqref{tildeiso}) as well as that $\widetilde{M^{-*}}$ takes $\tilde{I}$ onto itself, this amounts to checking that
\begin{equation}
\tilde{\kappa}\mu=\Big[(M-\mathrm{Id}_{U})\alpha\big(\widetilde{M^{*}}-\mathrm{Id}_{\tilde{U}}\big)+M\alpha-\alpha\widetilde{M^{*}}-\tfrac{\tilde{\psi}\widetilde{\psi^{*}}}{2}-\tilde{\eta}\Big]\mu \in I\quad\mathrm{for\ every\ }\mu\in\tilde{I}. \label{condoneta}
\end{equation}
Hence $\eta$ must have the form required in condition $(iv)$, because $\tilde{\kappa}\in\mathrm{Hom}(\tilde{I},I)$ precisely when $\kappa\in\mathrm{Hom}(I^{*},I)$. Since all of these arguments are invertible, we have proved that $A\in\mathcal{P}_{U}^{0}$ lies in $\Gamma_{L}$, and therefore in $\Gamma_{L,I}$, if and only if our four conditions are satisfied.

Now, if $M\in\mathrm{SL}(I_{L^{*}},I)$ then the map $\iota\circ(\mathrm{Id}_{U}-M):\Delta_{\Lambda} \to U/I$ has finite image $J/I$ for some lattice $I \subseteq J \subseteq U$. Since $J$ is torsion-free, its composition with $p$ can be lifted to a map from $\Lambda^{*}$ to $J \subseteq U$ (as in the remark following Proposition \ref{choicecomp}), whose extension to an element $\psi\in\mathrm{Hom}_{\mathbb{Q}}(W,U)$ clearly satisfies condition $(iii)$.

Next, given both $M$ and $\psi$, showing the existence of $\eta$ satisfying condition $(iv)$ amounts to proving that there exists an element $\kappa\in\mathrm{Hom}(I^{*},I)$, or equivalently $\tilde{\kappa}\in\mathrm{Hom}(\tilde{I},I)$, such that the asserted formula for $\tilde{\eta}$ there is anti-symmetric. As duality and Lemma \ref{nonisocor} yield the equality \[\Big(\big(M\alpha-\alpha\widetilde{M^{*}}\big)\tilde{u},\tilde{v}\Big)=\big(\alpha\tilde{u},\widetilde{M^{*}}\tilde{v}\big)-\big(\alpha\widetilde{M^{*}}\tilde{u},\tilde{v}\big)
=\tfrac{1}{2}\big(\tilde{u},\widetilde{M^{*}}\tilde{v}\big)-\tfrac{1}{2}\big(\widetilde{M^{*}}\tilde{u},\tilde{v}\big)\] for $\tilde{u}$ and $\tilde{v}$ in $\tilde{U}$, the part $M\alpha-\alpha\widetilde{M^{*}}$ from Equation \eqref{condoneta} is always anti-symmetric. On the other hand, consider the map $(\mathrm{Id}_{U}-M)\alpha\big(\mathrm{Id}_{\tilde{U}}-\widetilde{M^{*}}\big)-\frac{\tilde{\psi}\widetilde{\psi^{*}}}{2}$ is a homomorphism from $\tilde{I}$ to $U$, as well as the resulting $\mathbb{Q}$-valued bilinear form on $\tilde{I}$, which we denote by $B$. First, duality and Lemma \ref{nonisocor} express $B(\tilde{u},\tilde{v})$ for $\tilde{u}$ and $\tilde{v}$ from $\tilde{U}$ as \[\Big(\!(\mathrm{Id}_{U}-M)\alpha\big(\mathrm{Id}_{\tilde{U}}-\widetilde{M^{*}}\big)\tilde{u}-\tfrac{\tilde{\psi}\widetilde{\psi^{*}}}{2}\tilde{u},\tilde{v}\!\Big)\!=\!\tfrac{1}{2}\!
\Big(\!\big(\mathrm{Id}_{\tilde{U}}-\widetilde{M^{*}}\big)\tilde{u},\!\big(\mathrm{Id}_{\tilde{U}}-\widetilde{M^{*}}\big)\tilde{v}\!\Big)-\frac{\big(\widetilde{\psi^{*}}\tilde{u},\widetilde{\psi^{*}}\tilde{v}\big)}{2}\!,\] which proves the symmetry. For applying Lemma \ref{symas} we must therefore verify that that $B(\tilde{u},\tilde{u})\in\mathbb{Z}$ for every $\tilde{u}\in\tilde{I}$, which by the last equality with $\tilde{v}=\tilde{u}$ amounts to to the equality of $\frac{[(\mathrm{Id}_{\tilde{U}}-\widetilde{M^{*}})\tilde{u}]^{2}}{2}$ and $\frac{[\widetilde{\psi^{*}}\tilde{u}]^{2}}{2}$ in $\mathbb{Q}/\mathbb{Z}$. But as $\widetilde{\psi^{*}}\tilde{u}\in\tilde{\Lambda}^{*}$ for $\tilde{u}\in\tilde{I}$, condition $(iii)$ on $\psi$ and the norm condition on $\iota$ in Theorem \ref{latdecom} imply that \[\tfrac{[\widetilde{\psi^{*}}\tilde{u}]^{2}}{2}+\mathbb{Z}=\tfrac{[\tilde{p}\widetilde{\psi^{*}}\tilde{u}]^{2}}{2}=\tfrac{[\iota(\mathrm{Id}_{\tilde{U}}-\widetilde{M^{*}})\tilde{u}]^{2}}{2}=
\tfrac{[(\mathrm{Id}_{\tilde{U}}-\widetilde{M^{*}})\tilde{u}]^{2}}{2}+\mathbb{Z}\] in $\mathbb{Q}/\mathbb{Z}$, as desired. Therefore Lemma \ref{symas} shows that we can indeed take $\kappa$ and $\tilde{\kappa}$ with the desired properties, and that the resulting map $\eta$ is unique up to $\mathrm{Hom}^{as}(I^{*},I)$ (note, however, that the resulting set of maps $\tilde{\eta}$ is \emph{not} the image of coset from Equation \eqref{coset} under the map from Equation \eqref{tildeiso}, but rather its translation by the anti-symmetric map $M\alpha-\alpha\widetilde{M^{*}}$). This completes the proof of the proposition.
\end{proof}

\smallskip

Considering now an element of $\Gamma_{L,I}\cap\mathcal{W}_{U}$, where conditions $(i)$ and $(ii)$ of Proposition \ref{paragrpZ} are immediate, condition $(iii)$ there, with $M=Id_{U}$, reduces to the inclusion $\psi(\Lambda^{*}) \subseteq I$. Since for such $\psi$ we get $\psi^{*}(I^{*})\subseteq\Lambda\subseteq\Lambda^{*}$ by dualizing, so that compositions like $\psi\varphi^{*}$ for two such maps $\varphi$ and $\psi$ lie in $\mathrm{Hom}(I^{*},I)$. We would therefore like to define a Heisenberg group over $\mathbb{Z}$, but we have to be careful, because the subset \[\mathrm{Hom}(\Lambda^{*},I)\times\mathrm{Hom}^{as}(I^{*},I)\subseteq\mathrm{Hom}_{\mathbb{Q}}(W,U)\times\mathrm{Hom}_{\mathbb{Q}}^{as}(U^{*},U)\] is not closed under the law of multiplication from Definition \ref{Heisdef}. Indeed, for such maps $\psi$ and $\varphi$, the combination $\frac{\psi\varphi^{*}-\varphi\psi^{*}}{2}$ from Proposition \ref{unipstruc} does not necessarily lie in $\mathrm{Hom}^{as}(I^{*},I)$. However, Proposition \ref{paragrpZ} gives us the desired form of the integral Heisenberg group, as it yields the following consequence.
\begin{cor}
For every $\psi\in\mathrm{Hom}(\Lambda^{*},I)$ denote by $c_{\psi}$ the coset from Equation \eqref{coset} that is associated with the bilinear form $-\frac{\tilde{\psi}\widetilde{\psi^{*}}}{2}$, namely $c_{\psi}$ consists of those $\eta\in\mathrm{Hom}^{as}\big(I^{*},\tfrac{1}{2}I\big)$ such that $\frac{\psi\psi^{*}}{2}+\eta\in\mathrm{Hom}(I^{*},I)$. Then \[\Gamma_{L}\cap\mathcal{W}_{U}=\big\{(\psi,\eta)\in\mathrm{Hom}(\Lambda^{*},I)\times\mathrm{Hom}^{as}\big(I^{*},\tfrac{1}{2}I\big)\big|\;\eta+\mathrm{Hom}^{as}(I^{*},I)=c_{\psi}\big\}.\] \label{ZHeis}
\end{cor}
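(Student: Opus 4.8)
The plan is to deduce Corollary \ref{ZHeis} directly from Proposition \ref{paragrpZ} by specializing to the case $M=\mathrm{Id}_{U}$ and $\gamma=\mathrm{Id}_{W}$, which is precisely the condition that $A\in\Gamma_{L,I}$ lie in the unipotent radical $\mathcal{W}_{U}$. First I would observe that for such an $A$, conditions $(i)$ and $(ii)$ of Proposition \ref{paragrpZ} are automatic. Condition $(iii)$, since $\mathrm{Id}_{U}-M=0$, forces the induced map $\Delta_{\Lambda}\to U/I$ to vanish, i.e. it says exactly that $\psi(\Lambda^{*})\subseteq I$ (together with the weaker $\psi(\Lambda)\subseteq I$, which it subsumes); equivalently $\psi\in\mathrm{Hom}(\Lambda^{*},I)$. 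Dualizing gives $\psi^{*}(I^{*})\subseteq\Lambda\subseteq\Lambda^{*}$, so $\tilde\psi\widetilde{\psi^{*}}$ is (the tilde of) an element of $\mathrm{Hom}(I^{*},I)$ extended $\mathbb{Q}$-linearly, hence in particular gives a $\mathbb{Q}$-valued bilinear form on $\tilde I$, which is what Lemma \ref{symas} wants as input.

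Next I would unwind condition $(iv)$ with $M=\mathrm{Id}_{U}$. Then $\mathrm{Id}_{\tilde U}-\widetilde{M^{*}}=0$ and $M\alpha-\alpha\widetilde{M^{*}}=\alpha-\alpha=0$, so condition $(iv)$ collapses to the requirement that $\tilde\eta=-\tfrac{\tilde\psi\widetilde{\psi^{*}}}{2}-\tilde\kappa$ for some $\kappa\in\mathrm{Hom}(I^{*},I)$; equivalently $\tfrac{\tilde\psi\widetilde{\psi^{*}}}{2}+\tilde\eta\in\mathrm{Hom}(\tilde I,I)$, equivalently $\tfrac{\psi\psi^{*}}{2}+\eta\in\mathrm{Hom}(I^{*},I)$ after inverting the tilde-isomorphism of Equation \eqref{tildeiso}. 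This is by definition the statement that $\eta$ lies in the coset $c_{\psi}$ of Equation \eqref{coset} attached to the symmetric form $-\tfrac{\tilde\psi\widetilde{\psi^{*}}}{2}$, provided we know this coset is non-empty, i.e. that the hypothesis of Lemma \ref{symas} is met. That hypothesis is $B(\tilde u,\tilde u)\in\mathbb{Z}$ for all $\tilde u\in\tilde I$ with $B=-\tfrac{\tilde\psi\widetilde{\psi^{*}}}{2}$; this is exactly the norm computation already carried out in the last part of the proof of Proposition \ref{paragrpZ} in the specialization $M=\mathrm{Id}_{U}$, where the identity $\tfrac{[\widetilde{\psi^{*}}\tilde u]^{2}}{2}+\mathbb{Z}=\tfrac{[(\mathrm{Id}_{\tilde U}-\widetilde{M^{*}})\tilde u]^{2}}{2}+\mathbb{Z}$ becomes $\tfrac{[\widetilde{\psi^{*}}\tilde u]^{2}}{2}\in\mathbb{Z}$ because $\widetilde{M^{*}}=\mathrm{Id}_{\tilde U}$; equivalently one notes $\psi^{*}(I^{*})\subseteq\Lambda$ and $\Lambda$ is even. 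Hence $c_{\psi}\neq\emptyset$ and the description is consistent.

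Finally I would assemble the pieces: $\Gamma_{L}\cap\mathcal{W}_{U}=\Gamma_{L,I}\cap\mathcal{W}_{U}$ (the identity component issue is vacuous on the connected group $\mathcal{W}_{U}$), and by the equivalence in Proposition \ref{paragrpZ} an element $(\psi,\eta)\in\mathcal{W}_{U}$ lies in $\Gamma_{L}$ iff $\psi\in\mathrm{Hom}(\Lambda^{*},I)$ and $\eta\in\mathrm{Hom}^{as}\bigl(I^{*},\tfrac12 I\bigr)$ with $\eta+\mathrm{Hom}^{as}(I^{*},I)=c_{\psi}$ — which is the asserted formula. I do not expect a genuine obstacle here; the corollary is essentially a repackaging of the $M=\mathrm{Id}_{U}$, $\gamma=\mathrm{Id}_{W}$ case of Proposition \ref{paragrpZ}, and the only point needing a line of justification is that $\eta$ automatically lands in $\mathrm{Hom}^{as}\bigl(I^{*},\tfrac12 I\bigr)$ rather than just in $\mathrm{Hom}^{as}_{\mathbb{Q}}(U^{*},U)$: this follows because $B=-\tfrac{\tilde\psi\widetilde{\psi^{*}}}{2}$ takes values in $\tfrac12 I$ (the usual half-integrality of a bilinear form relative to its quadratic form, applied on $\tilde I$), and $\tilde\kappa\in\mathrm{Hom}(\tilde I,I)$, so $\tilde\eta=B-\tilde\kappa\in\mathrm{Hom}\bigl(\tilde I,\tfrac12 I\bigr)$, which transports back to $\eta\in\mathrm{Hom}^{as}\bigl(I^{*},\tfrac12 I\bigr)$.
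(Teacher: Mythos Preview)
Your proposal is correct and follows essentially the same approach as the paper: specialize Proposition \ref{paragrpZ} to $M=\mathrm{Id}_{U}$, $\gamma=\mathrm{Id}_{W}$, observe that the $\alpha$-terms and the $(\mathrm{Id}_{U}-M)$-terms vanish so that condition $(iv)$ reduces to $\tilde\eta=-\tfrac{\tilde\psi\widetilde{\psi^{*}}}{2}-\tilde\kappa$, and identify this with membership in the coset $c_{\psi}$. Your write-up is in fact more careful than the paper's on two points it leaves implicit: that $\Gamma_{L}\cap\mathcal{W}_{U}=\Gamma_{L,I}\cap\mathcal{W}_{U}$ by connectedness of $\mathcal{W}_{U}$, and that $\eta$ automatically lands in $\mathrm{Hom}^{as}\bigl(I^{*},\tfrac12 I\bigr)$ via the half-integrality argument.
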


\begin{proof}
Indeed, as mentioned above the parameter $\psi$ from Proposition \ref{paragrpZ} must lie in $\mathrm{Hom}(\Lambda^{*},I)$. In addition, since the difference $M\alpha-\alpha\widetilde{M^{*}}$ between the set of corresponding maps $\tilde{\eta}$ and the image of the coset from Equation \eqref{coset} under the map from Equation \eqref{tildeiso} vanishes when $M=Id_{U}$, and the bilinear form $B$ reduces to the asserted one in this case, we indeed obtain the corresponding subset. This proves the corollary.
\end{proof}
The fact that the subset from Corollary \ref{ZHeis} is a subgroup of $\mathcal{W}_{U}$, namely of the Heisenberg group $H\big(\mathrm{Hom}_{\mathbb{F}}(W,U),\mathrm{Hom}_{\mathbb{F}}^{as}(U^{*},U)\big)$, can also be seen directly. Indeed, for every $\psi\in\mathrm{Hom}(\Lambda^{*},I)$, the fact that $\psi^{*}(I^{*})$ is contained in the even lattice $\Lambda$ shows that for $u^{*} \in I^{*}$ the pairing $\big(\frac{\psi\psi^{*}u^{*}}{2},u^{*}\big)=\frac{(\psi^{*}u^{*},\psi^{*}u^{*})}{2}$ is in $\mathbb{Z}$, so that $c_{\psi}$ is indeed well-defined via Lemma \ref{symas}. In addition, if $\varphi$ is another element of $\mathrm{Hom}(\Lambda^{*},I)$ and we take $\eta \in c_{\psi}$ and $\rho \in c_{\varphi}$ then \[\tfrac{(\psi+\varphi)(\psi+\varphi)^{*}}{2}+\big(\eta+\rho+\tfrac{\psi\varphi^{*}-\varphi\psi^{*}}{2})=\big(\tfrac{\psi\psi^{*}}{2}+\eta\big)+\big(\tfrac{\varphi\varphi^{*}}{2}+\rho\big)+\psi\varphi^{*}
\in\mathrm{Hom}(I^{*},I),\] so that $\eta+\rho+\tfrac{\psi\varphi^{*}-\varphi\psi^{*}}{2} \in c_{\psi+\varphi}$, as desired. We therefore denote this subgroup by $H\big(\mathrm{Hom}(\Lambda^{*},I),\mathrm{Hom}^{as}(I^{*},I)\big)$ (compare with the definition of the group $\mathcal{P}(\Lambda)$ in Section 2.2 of \cite{[BL]}).

\smallskip

For presenting the main result, we recall the lattices $I=U \cap L$ and $\Lambda=I^{\perp}_{L}/I$ inside $W=U^{\perp}/U$, with dual $\Lambda^{*} \subseteq W$, as well as the group $\mathrm{SL}(I_{L^{*}},I)$ from Equation \eqref{SLJIdef}. We also write the (finite) image of $\iota^{*}$ in $U/I_{L^{*}}$ as $I_{\iota}/I_{L^{*}}$ for the appropriate lattice $I_{\iota} \subseteq U$, and note that \[\mathrm{the\ inclusion}\quad I_{L^{*}} \subseteq I_{\iota}\quad\mathrm{implies\ the\ inclusion}\quad\mathrm{SL}(I_{\iota},I)\subseteq\mathrm{SL}(I_{L^{*}},I).\] Using Proposition \ref{paragrpZ} we can now describe $\Gamma_{L,I}$ and the its image $\overline{\Gamma}_{L,I}$ in $\overline{\mathcal{P}}_{U}$, which is isomorphic to $\Gamma_{L,I}/\mathrm{Hom}^{as}(I^{*},I)$, as follows.
\begin{thm}
The map from Lemma \ref{unipker} restricts to a short exact sequence \[1\to\Gamma_{L,I}\cap\mathcal{W}_{U}=H\big(\mathrm{Hom}(\Lambda^{*},I),\mathrm{Hom}^{as}(I^{*},I)\big)\to\Gamma_{L,I}\to\mathrm{SL}(I_{L^{*}},I)\times\Gamma_{\Lambda}\to1,\] where the kernel is the group from Corollary \ref{ZHeis}. The group $\overline{\Gamma}_{L,I}$ sits in a similar short exact sequence \[1\to\mathrm{Hom}(\Lambda^{*},I)\to\overline{\Gamma}_{L,I}\to\mathrm{SL}(I_{L^{*}},I)\times\Gamma_{\Lambda}\to1,\] and after choosing a complement $\tilde{U}=\tilde{I}_{\mathbb{Q}}$ as in Lemma \ref{compoverZ}, with the map $\iota$ from Theorem \ref{latdecom}, the restriction of this short exact sequence to pre-images of $\mathrm{SL}(I_{\iota},I)\times\Gamma_{\Lambda}$ splits as a semi-direct product in the corresponding coordinates. \label{sdprodZ}
\end{thm}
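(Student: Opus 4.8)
The plan is to read everything off Proposition \ref{paragrpZ} and Corollary \ref{ZHeis}: the two exact sequences are essentially bookkeeping, and the splitting rests on a single observation about the vanishing of the correction term $(\mathrm{Id}_{U}-M)\iota^{*}$. For the first sequence, the map from Lemma \ref{unipker} has kernel $\mathcal{W}_{U}$, and since $\mathcal{W}_{U}$ is connected and contained in $\mathcal{P}_{U_{\mathbb{R}}}^{0}$, the intersection $\Gamma_{L,I}\cap\mathcal{W}_{U}$ coincides with $\Gamma_{L}\cap\mathcal{W}_{U}$, which Corollary \ref{ZHeis} identifies with $H\big(\mathrm{Hom}(\Lambda^{*},I),\mathrm{Hom}^{as}(I^{*},I)\big)$. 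Conditions $(i)$ and $(ii)$ of Proposition \ref{paragrpZ} show the image of $\Gamma_{L,I}$ under this map lies in $\mathrm{SL}(I_{L^{*}},I)\times\Gamma_{\Lambda}$, and the ``Moreover'' part of that proposition shows every such pair $(M,\gamma)$ is attained (first pick $\psi$ for condition $(iii)$, then $\eta$ for condition $(iv)$). This gives the first sequence, with kernel as claimed.

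For the second sequence, I would first note that by Corollary \ref{ZHeis} the subgroup $\Gamma_{L,I}\cap\mathrm{Hom}^{as}(U^{*},U)$ consists of the pairs $(0,\eta)$ lying in the trivial coset $c_{0}=\mathrm{Hom}^{as}(I^{*},I)$, hence equals $\mathrm{Hom}^{as}(I^{*},I)$; this is normal in $\Gamma_{L,I}$, being the intersection of $\Gamma_{L,I}$ with the normal subgroup $\mathrm{Hom}^{as}(U^{*},U)$ of $\mathcal{P}_{U}$, so $\overline{\Gamma}_{L,I}\cong\Gamma_{L,I}/\mathrm{Hom}^{as}(I^{*},I)$. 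Dividing the first sequence by $\mathrm{Hom}^{as}(I^{*},I)$ leaves $\mathrm{SL}(I_{L^{*}},I)\times\Gamma_{\Lambda}$ unchanged, and since $\mathrm{Hom}^{as}(I^{*},I)$ is the centre of the Heisenberg kernel it replaces that kernel by $H\big(\mathrm{Hom}(\Lambda^{*},I),\mathrm{Hom}^{as}(I^{*},I)\big)/\mathrm{Hom}^{as}(I^{*},I)\cong\mathrm{Hom}(\Lambda^{*},I)$ by Equation \eqref{Heisseq}; equivalently, this kernel is read directly off conditions $(i)$--$(iii)$ of Proposition \ref{paragrpZ} with $M=\mathrm{Id}_{U}$ and $\gamma=\mathrm{Id}_{W}$, where condition $(iii)$ becomes $\psi(\Lambda^{*})\subseteq I$.

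For the splitting I would work in the coordinates of $\overline{\mathcal{P}}_{U}$, which by Corollary \ref{actquot} is the semi-direct product of the Levi $\mathrm{GL}(U)\times\mathrm{O}(W)$ acting on $\mathrm{Hom}_{\mathbb{Q}}(W,U)$ by $(M,\gamma):\psi\mapsto M\psi\gamma^{-1}$. By conditions $(i)$--$(iii)$ of Proposition \ref{paragrpZ}, $\overline{\Gamma}_{L,I}$ is the set of triples $(M,\gamma,\psi)$ with $M\in\mathrm{SL}(I_{L^{*}},I)$, $\gamma\in\Gamma_{\Lambda}$, $\psi(\Lambda)\subseteq I$, and induced map $\psi^{\Delta}=(\mathrm{Id}_{U}-M)\iota^{*}\colon\Delta_{\Lambda}\to U/I$. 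The key point is that the image of $\iota^{*}$ in $U/I_{L^{*}}$ is $I_{\iota}/I_{L^{*}}$ by the definition of $I_{\iota}$, and $\mathrm{Id}_{U}-M$ carries $I_{\iota}$ into $I$ precisely when $M\in\mathrm{SL}(I_{\iota},I)$ by Equation \eqref{SLJIdef}; hence for such $M$ the composite $(\mathrm{Id}_{U}-M)\iota^{*}$ vanishes, so condition $(iii)$ reduces to $\psi\in\mathrm{Hom}(\Lambda^{*},I)$. Therefore the pre-image of $\mathrm{SL}(I_{\iota},I)\times\Gamma_{\Lambda}$ is exactly the set of triples $(M,\gamma,\psi)$ with $M\in\mathrm{SL}(I_{\iota},I)$, $\gamma\in\Gamma_{\Lambda}$, $\psi\in\mathrm{Hom}(\Lambda^{*},I)$, and since $\mathrm{SL}(I_{\iota},I)\subseteq\mathrm{SL}(I)$ preserves $I$ while $\Gamma_{\Lambda}$ preserves $\Lambda^{*}$, the action $\psi\mapsto M\psi\gamma^{-1}$ preserves $\mathrm{Hom}(\Lambda^{*},I)$; thus this pre-image is the semi-direct product $\big(\mathrm{SL}(I_{\iota},I)\times\Gamma_{\Lambda}\big)\ltimes\mathrm{Hom}(\Lambda^{*},I)$ in the inherited coordinates, with section $(M,\gamma)\mapsto(M,\gamma,0)$. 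The only substantive step here is the vanishing of $(\mathrm{Id}_{U}-M)\iota^{*}$ on $I_{\iota}/I_{L^{*}}$; the rest is assembling results already proved, so the main obstacle is organizing the bookkeeping correctly — in particular pinning down the kernel of the second sequence as $\mathrm{Hom}(\Lambda^{*},I)$ on the nose rather than a twisted variant.
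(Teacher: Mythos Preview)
Your proposal is correct and follows essentially the same approach as the paper: both derive the exact sequences as bookkeeping from Proposition \ref{paragrpZ} and Corollary \ref{ZHeis}, and both obtain the splitting from the single observation that condition $(iii)$ reduces to $\psi\in\mathrm{Hom}(\Lambda^{*},I)$ exactly when $(\mathrm{Id}_{U}-M)\iota^{*}$ vanishes, i.e.\ when $M\in\mathrm{SL}(I_{\iota},I)$. Your write-up is simply more explicit than the paper's two-sentence proof, spelling out the quotient by $\mathrm{Hom}^{as}(I^{*},I)$ and the closure of $\mathrm{Hom}(\Lambda^{*},I)$ under the Levi action.
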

In particular, the image of $\Gamma_{L,I}$ in the coarser quotient $\mathrm{GL}(U)$ from Corollary \ref{actquot} is $\mathrm{SL}(I_{L^{*}},I)$.

\begin{proof}
The form of the kernel $\Gamma_{L,I}\cap\mathcal{W}_{U}$ was proved in Corollary \ref{ZHeis}, and the short exact sequences are immediate consequences of Proposition \ref{paragrpZ}. For the splitting of the second one we simply observe that elements $\psi\in\mathrm{Hom}(\Lambda^{*},I)$ satisfy condition $(iii)$ of Proposition \ref{paragrpZ} if and only if $M$ is in the subgroup $\mathrm{SL}(I_{\iota},I)$ (by the definition of the latter group). This proves the theorem.
\end{proof}
Note that the pre-image of $\mathrm{SL}(I_{\iota},I)\times\Gamma_{\Lambda}$ in $\Gamma_{L,I}$ itself does not split the first short exact sequence from Theorem \ref{sdprodZ}, and also not in the special case considered in Corollary \ref{grpdim2} below. Indeed, while Corollary \ref{ZHeis} and the proof of Proposition \ref{paragrpZ} shows that when $M\in\mathrm{SL}(I_{\iota},I)$ and $\psi\in\mathrm{Hom}(\Lambda^{*},I)$ the change in the bilinear form $B$ may only modify the cosets $c_{\psi}$ in $\mathrm{Hom}^{as}\big(I^{*},\tfrac{1}{2}I\big)\big/\mathrm{Hom}^{as}(I^{*},I)$, in the formula for $\tilde{\eta}$ we also have the anti-symmetric part $M\alpha-\alpha\widetilde{M^{*}}$, which needs not be in $\mathrm{Hom}^{as}\big(I^{*},\frac{1}{2}I\big)$ at all. We remark that a direct verification shows that replacing $\tilde{U}$ by another complement $\hat{U}=\hat{I}_{\mathbb{Q}}$ as in Proposition \ref{choicecomp} preserves the description from Proposition \ref{paragrpZ} and Theorem \ref{sdprodZ}.

\smallskip

We would like to investigate the second short exact sequence from Theorem \ref{sdprodZ} a bit more. Any element $M\in\mathrm{SL}(I_{L^{*}},I)$ determines the class
\begin{equation}
b_{M}:=(\mathrm{Id}_{U}-M)\circ\iota^{*}\in\mathrm{Hom}(\Delta_{\Lambda},U/I)=\mathrm{Hom}(\Lambda,I)/\mathrm{Hom}(\Lambda^{*},I), \label{cocycle}
\end{equation}
and an element $\psi\in\mathrm{Hom}_{\mathbb{Q}}(W,U)$ satisfies condition $(iii)$ of Proposition \ref{paragrpZ} if and only if $\psi$ is (the extension of) an element of $\mathrm{Hom}(\Lambda,I)$ whose image modulo $\mathrm{Hom}(\Lambda^{*},I)$ is $b_{M}$. As this class is trivial if and only if $M\in\mathrm{SL}(I_{\iota},I)$, we find that elements of $\overline{\Gamma}_{L,I}$ with $\mathrm{GL}(U)$-images in $\mathrm{SL}(I_{L^{*}},I)\setminus\mathrm{SL}(I_{\iota},I)$ are paired with non-trivial cosets of $\mathrm{Hom}(\Lambda^{*},I)$ inside $\mathrm{Hom}(\Lambda,I)\subseteq\mathrm{Hom}_{\mathbb{Q}}(W,U)$, and the splitting of the short exact sequence does not extend further. The product rule from Corollary \ref{actquot} thus yields the following result.
\begin{prop}
The groups $\mathrm{SL}(I_{L^{*}},I)$ and $\Gamma_{\Lambda}$ operate naturally on the set $\mathrm{Hom}(\Delta_{\Lambda},U/I)$, with the action of the latter group being trivial. For $M$ and $N$ in the former group we have the cocycle condition $b_{MN}=b_{M}+M(b_{N})$. \label{actcoset}
\end{prop}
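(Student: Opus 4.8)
The plan is to derive everything — the two actions, the triviality of the second, and the cocycle identity — from a single source: the product rule $(M,\gamma)\colon\psi\mapsto M\psi\gamma^{-1}$ on $\mathrm{Hom}_{\mathbb{Q}}(W,U)$ recorded in Corollary \ref{actquot}, combined with the identification $\mathrm{Hom}(\Delta_{\Lambda},U/I)\cong\mathrm{Hom}(\Lambda,I)/\mathrm{Hom}(\Lambda^{*},I)$ that already underlies Equation \eqref{cocycle}. First I would check that this product rule descends to the quotient: for $M\in\mathrm{SL}(I_{L^{*}},I)\subseteq\mathrm{Aut}(I)$ one has $M(I)=I$, and for $\gamma\in\Gamma_{\Lambda}\subseteq\mathrm{Aut}(\Lambda)$ one has $\gamma^{-1}(\Lambda)=\Lambda$ and $\gamma^{-1}(\Lambda^{*})=\Lambda^{*}$; hence $\psi\mapsto M\psi\gamma^{-1}$ carries $\mathrm{Hom}(\Lambda,I)$ into itself and $\mathrm{Hom}(\Lambda^{*},I)$ into itself, so it induces a well-defined action of $\mathrm{SL}(I_{L^{*}},I)\times\Gamma_{\Lambda}$ on $\mathrm{Hom}(\Delta_{\Lambda},U/I)$. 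Spelled out, the $M$-part acts by post-composition with the automorphism of $U/I$ induced by $M$, and the $\gamma$-part by pre-composition with the automorphism of $\Delta_{\Lambda}$ induced by $\gamma^{-1}$; these are the asserted natural actions, and they automatically respect composition since they come from restricting a group action.

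For the triviality of the $\Gamma_{\Lambda}$-action I would invoke only the defining property of $\Gamma_{\Lambda}$ as the discriminant kernel of $\Lambda$: every $\gamma\in\Gamma_{\Lambda}$ acts as the identity on $\Delta_{\Lambda}=\Lambda^{*}/\Lambda$, so pre-composition with $\gamma^{-1}$ is the identity on $\mathrm{Hom}(\Delta_{\Lambda},U/I)$.

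The remaining point is the cocycle identity $b_{MN}=b_{M}+M(b_{N})$. I would first note that $\mathrm{SL}(I_{L^{*}},I)$ is a group: if $M,N$ lie in it and $u\in I_{L^{*}}$, then $u-Nu\in I\subseteq I_{L^{*}}$ forces $Nu\in I_{L^{*}}$, whence $u-MNu=(u-Nu)+(Nu-M(Nu))\in I$, so $MN\in\mathrm{SL}(I_{L^{*}},I)$ and $b_{MN}$ is defined. The three endomorphisms $\mathrm{Id}_{U}-M$, $M-MN=M(\mathrm{Id}_{U}-N)$ and $\mathrm{Id}_{U}-MN$ of $U$ then all send $I_{L^{*}}$ into $I$ and hence descend to maps $U/I_{L^{*}}\to U/I$, and the trivial identity $\mathrm{Id}_{U}-MN=(\mathrm{Id}_{U}-M)+M(\mathrm{Id}_{U}-N)$ on $U$ passes to these quotients. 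Composing on the right with $\iota^{*}\colon\Delta_{\Lambda}\to U/I_{L^{*}}$ gives
\[
b_{MN}=(\mathrm{Id}_{U}-MN)\circ\iota^{*}=(\mathrm{Id}_{U}-M)\circ\iota^{*}+M\circ\bigl((\mathrm{Id}_{U}-N)\circ\iota^{*}\bigr)=b_{M}+M(b_{N}),
\]
where in the last equality I read $M\circ\bigl((\mathrm{Id}_{U}-N)\circ\iota^{*}\bigr)$ as post-composition of $b_{N}\colon\Delta_{\Lambda}\to U/I$ with the induced automorphism of $U/I$, i.e.\ as $M(b_{N})$. I expect no real difficulty here beyond this last bit of bookkeeping: one must make sure that ``$M(b_{N})$'' in the statement denotes the descended action, rather than the map $M(\mathrm{Id}_{U}-N)$ viewed already on $U/I_{L^{*}}$, but since $M(I)=I$ these two readings induce the same map on cosets, so there is nothing to reconcile. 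Keeping track of which quotient each arrow lands in is thus the (mild) main obstacle.
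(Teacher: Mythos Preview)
Your proposal is correct and follows essentially the same approach as the paper: both derive the action from the rule $(M,\gamma)\colon\psi\mapsto M\psi\gamma^{-1}$ of Corollary~\ref{actquot}, note that the $\Gamma_{\Lambda}$-part is trivial because $\Gamma_{\Lambda}$ acts trivially on $\Delta_{\Lambda}$, and obtain the cocycle identity from the elementary decomposition $\mathrm{Id}_{U}-MN=(\mathrm{Id}_{U}-M)+M(\mathrm{Id}_{U}-N)$ composed with $\iota^{*}$. The only cosmetic difference is that the paper first phrases the cocycle as a statement about the $\mathrm{Hom}(\Lambda^{*},I)$-coset of the third coordinate in the product of two elements of $\overline{\Gamma}_{L,I}$ before writing the same identity directly in terms of $\iota^{*}$, whereas you go straight to the direct computation; your extra checks (closure of $\mathrm{SL}(I_{L^{*}},I)$ under multiplication, well-definedness on the quotient) are welcome bookkeeping that the paper leaves implicit.
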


\begin{proof}
Assume that $A$ and $B$ are elements of $\overline{\Gamma}_{L,I}$, and Corollary \ref{actquot} shows that \[\mathrm{if\ }A\Longleftrightarrow(M,\gamma,\varphi)\mathrm{\ and\ }B\Longleftrightarrow(N,\delta,\varphi)\mathrm{\ then\ }AB\Longleftrightarrow(MN,\gamma\delta,\psi+M\varphi\gamma^{-1})\] in the coordinates $\mathrm{GL}(U)\times\mathrm{O}(W)\times\mathrm{Hom}_{\mathbb{Q}}(W,U)$. Therefore the action of $(M,\gamma)$ on $\mathrm{Hom}(\Delta_{\Lambda},U/I)$ is by composition with $M\in\mathrm{SL}(I_{L^{*}},I)\subseteq\mathrm{SL}(I)$ on the left (which is thus well-defined) and with $\gamma^{-1}$ on the right, the latter being trivial by the definition of $\Gamma_{\Lambda}$. Hence $b_{MN}$ is the coset containing the element $\psi+M\varphi\gamma^{-1}$, which is indeed the asserted one, in correspondence with the equality
\[b_{MN}=(\mathrm{Id}_{U}-MN)\circ\iota^{*}=(\mathrm{Id}_{U}-M)\circ\iota^{*}+M\circ(\mathrm{Id}_{U}-N)\circ\iota^{*}=b_{M}+M(b_{N}).\] This proves the proposition.
\end{proof}
Note that the map from Proposition \ref{actcoset} is \emph{not} a group homomorphism from $\mathrm{SL}(I_{L^{*}},I)$ to $\mathrm{Hom}(\Delta_{\Lambda},U/I)$ in general, and indeed, the set of $M\in\mathrm{SL}(I_{L^{*}},I)$ with $b_{M}=0$ was seen in Theorem \ref{sdprodZ} to be the subgroup $\mathrm{SL}(I_{\iota},I)$, and it is not necessarily normal in $\mathrm{SL}(I_{L^{*}},I)$.

\smallskip

Consider again the case in which $\iota=0$, where $L$ and $L^{*}$ are described in the simpler Equation \eqref{simpdecom}. Then $I_{\iota}=I_{L^{*}}$ (hence $\mathrm{SL}(I_{\iota},I)=\mathrm{SL}(I_{L^{*}},I)$), the cocycle from Equation \eqref{cocycle} is trivial, and the full group $\overline{\Gamma}_{L,I}$ splits as a semi-direct product in Theorem \ref{sdprodZ} (though $\Gamma_{L,I}$ does not in general). On the other hand, recall the simpler form of $\mathcal{P}_{U}$ appearing in Corollary \ref{stdim1} when $\dim U=1$. The structure of $\Gamma_{L,I}$ in this case is also much simpler, regardless of $\iota$.
\begin{cor}
Let $I$ be the subgroup of $L$ that is generated by the primitive isotropic vector $z \in L$, and set $U=I_{\mathbb{Q}}=\mathbb{Q}z \subseteq V$. Then $\Gamma_{L,I}$ is isomorphic to the semi-direct product in which $\Gamma_{\Lambda}$ operates on the group $(\Lambda,+)$. \label{dim1}
\end{cor}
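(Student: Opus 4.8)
The plan is to obtain Corollary~\ref{dim1} as the degenerate case $\dim U=1$ of Theorem~\ref{sdprodZ}, where essentially every ingredient collapses. First I would record the degenerations. Since $I$ has rank $1$ we have $\mathrm{SL}(I)=\{1\}$, so every $A\in\Gamma_{L,I}$ has $M=A|_U=\mathrm{Id}_U$, and consequently $\mathrm{SL}(I_{L^*},I)=\mathrm{SL}(I_\iota,I)=\{1\}$ irrespective of the choice of complement $\tilde U$ and of the map $\iota$. Moreover $\mathrm{Hom}^{as}(U^*,U)=0$, so by Corollary~\ref{stdim1} the group $\mathcal{W}_U$ is abelian, $\overline{\mathcal{P}}_U=\mathcal{P}_U$, $\overline{\Gamma}_{L,I}=\Gamma_{L,I}$, and the coset ambiguity in Corollary~\ref{ZHeis} disappears, leaving $\Gamma_{L,I}\cap\mathcal{W}_U=\mathrm{Hom}(\Lambda^*,I)$.

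Feeding this into Theorem~\ref{sdprodZ}, the second short exact sequence there becomes $1\to\mathrm{Hom}(\Lambda^*,I)\to\Gamma_{L,I}\to\Gamma_\Lambda\to1$, and because $\mathrm{SL}(I_\iota,I)=\mathrm{SL}(I_{L^*},I)$ the subgroup on which the theorem provides a semi-direct splitting is already all of $\Gamma_{L,I}$; hence $\Gamma_{L,I}\cong\mathrm{Hom}(\Lambda^*,I)\rtimes\Gamma_\Lambda$. For completeness one can also see this directly from Proposition~\ref{paragrpZ}: with $M=\mathrm{Id}_U$, condition $(ii)$ is automatic, condition $(iii)$ reduces to $\psi(\Lambda^*)\subseteq I$, and condition $(iv)$ is solved by $\eta=0$ with $\kappa=-\tfrac{\psi\psi^*}{2}$, which lies in $\mathrm{Hom}(I^*,I)$ precisely because $\psi^*(I^*)\subseteq\Lambda$ and $\Lambda$ is even; surjectivity onto $\Gamma_\Lambda$ is the ``$\gamma$ arbitrary'' clause of that proposition.

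It then remains to make the two identifications in the statement explicit. Using a generator $z$ of $I$ and the bilinear form as in Equation~\eqref{isodim1}, write $\psi\in\mathrm{Hom}(W,U)$ as $\psi(\cdot)=(\phi,\cdot)z$ with $\phi\in W$; the condition $\psi(\Lambda^*)\subseteq\mathbb{Z}z$ is exactly $(\phi,\Lambda^*)\subseteq\mathbb{Z}$, i.e. $\phi\in(\Lambda^*)^*=\Lambda$, so $\mathrm{Hom}(\Lambda^*,I)\cong(\Lambda,+)$. For the action, Corollary~\ref{actquot} gives that $(\mathrm{Id}_U,\gamma)$ sends $\psi$ to $\psi\gamma^{-1}$, which under $\psi\leftrightarrow\phi$ and the orthogonality of $\gamma$ translates into $\phi\mapsto\gamma\phi$ --- the natural action of $\Gamma_\Lambda$ on $\Lambda$. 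There is no real obstacle here: the only points needing care are dualizing $\psi(\Lambda^*)\subseteq I$ so as to land on $\Lambda$ rather than $\Lambda^*$, and checking that the right-translation rule of Corollary~\ref{actquot} yields the untwisted $\Gamma_\Lambda$-action; the independence of $\tilde U$ and $\iota$ is visible throughout, matching the remark in the introduction that the rank-$1$ case stays simple over $\mathbb{Z}$.
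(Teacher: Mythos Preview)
Your argument is correct and follows essentially the same route as the paper: specialize Theorem~\ref{sdprodZ} to rank~$1$, use that $\mathrm{SL}(I)$ is trivial and $\mathrm{Hom}^{as}(U^{*},U)=0$ so that $\overline{\mathcal{P}}_{U}=\mathcal{P}_{U}$, and then identify $\mathrm{Hom}(\Lambda^{*},I)\cong\Lambda$ via the generator $z$. You simply supply more detail than the paper does, in particular the direct verification of condition~$(iv)$ via $\kappa=-\tfrac{\psi\psi^{*}}{2}$ and the explicit check that the conjugation action becomes the natural $\Gamma_{\Lambda}$-action on $\Lambda$.
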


\begin{proof}
We recall from Corollary \ref{stdim1} that $\mathcal{P}_{U}=\overline{\mathcal{P}}_{U}$ when $\dim U=1$, so that we can describe $\Gamma_{L,I}$ itself by the second exact sequence from Theorem \ref{sdprodZ}. Moreover, the group $\mathrm{SL}(I)$ is trivial when $I$ is of rank 1 (in correspondence with $\mathbb{Q}^{\times}_{+}$ having no non-trivial integral points), so that the sequence already splits. Moreover, when the generator $z$ for $U$ chosen for the isomorphism from Equation \eqref{isodim1} spans $I$ over $\mathbb{Z}$, this isomorphism identifies the kernel $\mathrm{Hom}(\Lambda^{*},I)$ from that sequence with $\Lambda$. This proves the corollary.
\end{proof}
The simple form of Corollary \ref{dim1} is one reason why we insisted on defining $\Gamma_{L,I}$ to be contained in the connected component $\mathcal{P}_{U_{\mathbb{R}}}^{0}$, as otherwise some elements mapping to $-\mathrm{Id}_{I}$ may lie in the discriminant kernel, and additional technical analysis will be required. Note that in the 1-dimensional case $I=\mathbb{Z}z$ the lattice $\Lambda$ is typically denoted by $K$, so that Corollary \ref{dim1} reproduces the semi-direct product of $\Gamma_{K}$ and $(K,+)$ appearing in \cite{[Bo]}, \cite{[Br]}, \cite{[Z1]}, and \cite{[Z2]}, among others.

As for the case with $\dim U=2$ which will be required for the toroidal boundary components below, we again take a basis for $U$ over $\mathbb{Q}$ that spans $I$ over $\mathbb{Z}$. This restricts the isomorphisms from Equation \eqref{isodim2} to isomorphisms
\begin{equation}
\mathrm{SL}(I)\cong\mathrm{SL}_{2}(\mathbb{Z}),\ \ \mathrm{Hom}^{as}(I^{*},I)\cong\mathbb{Z},\mathrm{\ and\ }\mathrm{Hom}(\Lambda^{*},I)\cong(\Lambda^{*})^{*}\times(\Lambda^{*})^{*}\cong\Lambda\times\Lambda, \label{SLISL2Z}
\end{equation}
where the cyclic group in the middle is contained as an index 2 subgroup of $\mathrm{Hom}^{as}\big(I^{*},\frac{1}{2}I\big)\cong\frac{1}{2}\mathbb{Z}$. The isomorphism in the middle of Equation \eqref{SLISL2Z} is the anti-symmetric part of the isomorphism \[\mathrm{Hom}(I^{*},I) \cong I \otimes I\cong\mathrm{M}_{2}(\mathbb{Z})\quad\mathrm{inside}\quad\mathrm{Hom}(I^{*},U) \cong I \otimes U\cong\mathrm{M}_{2}(\mathbb{Q}),\] and if $\psi\in\mathrm{Hom}(\Lambda^{*},I)$ is associated with the pair $(\lambda,\mu)\in\Lambda\times\Lambda$ then $\frac{\psi\psi^{*}}{2}$ is taken to the matrix $\frac{1}{2}\Big(\begin{smallmatrix} (\lambda,\lambda) & (\lambda,\mu) \\ (\mu,\lambda) & (\mu,\mu)\end{smallmatrix}\Big)\in\mathrm{M}_{2}(\mathbb{Q})$. It thus follows that the class $c_{\psi}$ from Corollary \ref{ZHeis} is the trivial class in $\mathrm{Hom}^{as}\big(I^{*},\frac{1}{2}I\big)\big/\mathrm{Hom}^{as}(I^{*},I)\cong\frac{1}{2}\mathbb{Z}\big/\mathbb{Z}$ when the pairing $(\lambda,\mu)$ is even, but the non-trivial one in case it is odd. Therefore the image of the group $H\big(\mathrm{Hom}(\Lambda^{*},I),\mathrm{Hom}^{as}(I^{*},I)\big)$ from Corollary \ref{ZHeis} inside the group $\tilde{H}(W,\mathbb{Q})$ from Corollary \ref{stdim2} takes the form
\begin{equation}
\tilde{H}(\Lambda,\mathbb{Z})=\big\{(\lambda,\mu,t)\in\Lambda\times\Lambda\times\tfrac{1}{2}\mathbb{Z}\big|\;t\in\tfrac{(\lambda,\mu)}{2}+\mathbb{Z}\big\}, \label{rk2Heis}
\end{equation}
which we shall naturally denote by $\tilde{H}(\Lambda,\mathbb{Z})$. Moreover, the groups $\mathrm{SL}(I_{L^{*}},I)$ and $\mathrm{SL}(I_{\iota},I)$, as defined in Equation \eqref{SLJIdef}, become, under the first isomorphism from Equation \eqref{SLISL2Z}, congruence subgroups of $\mathrm{SL}_{2}(\mathbb{Z})$, which we denote by $\Gamma_{L^{*}}$ and $\Gamma_{\iota}$ respectively. For example, if $\alpha$ and $\beta$ are generators for $I$ such that $\frac{1}{N}\alpha$ and $\frac{1}{D}\beta$ generate $I_{L^{*}}$, with $D$ dividing $N$, then the group $\Gamma_{L^{*}}$ is the classical congruence subgroup $\Gamma_{1}^{0}(N,D)=\Gamma_{1}(N)\cap\Gamma^{0}(D)$. In correspondence with Corollary \ref{stdim2}, Theorem \ref{sdprodZ} then takes the following form.
\begin{cor}
If $U=I_{\mathbb{Q}}$ is a 2-dimensional isotropic subspace of $V$ then choosing a basis for $I$ yields the two short exact sequences \[1\to\tilde{H}(\Lambda,\mathbb{Z})\to\Gamma_{L,I}\to\Gamma_{L^{*}}\times\Gamma_{\Lambda}\to1\mathrm{\ and\ }1\to\Lambda\times\Lambda\to\overline{\Gamma}_{L,I}\to\Gamma_{L^{*}}\times\Gamma_{\Lambda}\to1,\] where the kernel $\tilde{H}(\Lambda,\mathbb{Z})$ in the first sequence is defined in Equation \eqref{rk2Heis}. A choice of a (unimodular) complement $\tilde{U}=\tilde{I}_{\mathbb{Q}}$ yields a splitting of the second sequence over $\Gamma_{\iota}\times\Gamma_{\Lambda}$, where $\iota$ is the map from Theorem \ref{latdecom}. \label{grpdim2}
\end{cor}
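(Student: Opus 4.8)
The plan is to deduce both statements from Theorem \ref{sdprodZ} by specializing to $\dim U = 2$ and transporting the groups appearing there through the basis-dependent identifications of Equations \eqref{SLISL2Z} and \eqref{rk2Heis}, using Corollary \ref{stdim2} to present the unipotent radical. No new conceptual input is needed; the work is bookkeeping.

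First I would recall from Theorem \ref{sdprodZ} the two short exact sequences, with kernels $\Gamma_{L,I} \cap \mathcal{W}_U = H\big(\mathrm{Hom}(\Lambda^*, I), \mathrm{Hom}^{as}(I^*, I)\big)$ and $\mathrm{Hom}(\Lambda^*, I)$ respectively, common quotient $\mathrm{SL}(I_{L^*}, I) \times \Gamma_\Lambda$, and the splitting of the second one over $\mathrm{SL}(I_\iota, I) \times \Gamma_\Lambda$ furnished by the choice of $\tilde{U}$. Fixing a basis of $I$, the congruence subgroups $\mathrm{SL}(I_{L^*}, I)$ and $\mathrm{SL}(I_\iota, I)$ of $\mathrm{SL}(I) \cong \mathrm{SL}_2(\mathbb{Z})$ are by definition $\Gamma_{L^*}$ and $\Gamma_\iota$, and the last isomorphism in Equation \eqref{SLISL2Z} identifies $\mathrm{Hom}(\Lambda^*, I)$ with $\Lambda \times \Lambda$. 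These substitutions give at once the second sequence together with its splitting over $\Gamma_\iota \times \Gamma_\Lambda$, and also the quotient term of the first sequence.

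The only step that is not purely formal --- and the one I expect to be the main obstacle --- is matching the kernel of the first sequence with the group $\tilde{H}(\Lambda, \mathbb{Z})$ of Equation \eqref{rk2Heis}, that is, keeping track of the central coordinate. By Corollary \ref{stdim2}, the chosen basis of $I$ gives a generator of $\mathrm{Hom}^{as}(I^*, I) = \bigwedge^{2} U$ and presents $\mathcal{W}_U$ as $\tilde{H}(W, \mathbb{Q})$, under which $\mathrm{Hom}^{as}\big(I^*, \tfrac12 I\big) \cong \tfrac12\mathbb{Z}$ with $\mathrm{Hom}^{as}(I^*, I) \cong \mathbb{Z}$ as its index-$2$ subgroup. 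I would then push the description of $\Gamma_{L,I} \cap \mathcal{W}_U$ from Corollary \ref{ZHeis} through this identification: if $\psi \in \mathrm{Hom}(\Lambda^*, I)$ corresponds to $(\lambda, \mu) \in \Lambda \times \Lambda$, then $\tfrac{\psi\psi^*}{2}$ goes to the matrix $\tfrac12\Big(\begin{smallmatrix} (\lambda,\lambda) & (\lambda,\mu) \\ (\mu,\lambda) & (\mu,\mu)\end{smallmatrix}\Big)$, whose diagonal is integral since $\Lambda$ is even, so the coset $c_\psi$ in $\mathrm{Hom}^{as}\big(I^*, \tfrac12 I\big)\big/\mathrm{Hom}^{as}(I^*, I) \cong \tfrac12\mathbb{Z}\big/\mathbb{Z}$ is the non-trivial class precisely when $(\lambda, \mu)$ is odd. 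Consequently the permitted values of the central coordinate $t$ are exactly those with $t \in \tfrac{(\lambda,\mu)}{2} + \mathbb{Z}$, which is the defining condition of $\tilde{H}(\Lambda, \mathbb{Z})$; that this subset is a subgroup under the Heisenberg law of Definition \ref{Heisdef} in these coordinates is the identity $\eta + \rho + \tfrac{\psi\varphi^* - \varphi\psi^*}{2} \in c_{\psi + \varphi}$ already verified after Corollary \ref{ZHeis}. Assembling these identifications with Theorem \ref{sdprodZ} then yields the corollary.
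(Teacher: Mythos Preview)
Your proposal is correct and follows essentially the same approach as the paper: the corollary is obtained by specializing Theorem \ref{sdprodZ} to $\dim U=2$ and transporting the groups through the identifications of Equation \eqref{SLISL2Z}, with the explicit coset computation for $c_{\psi}$ (via the Gram matrix $\tfrac12\big(\begin{smallmatrix} (\lambda,\lambda) & (\lambda,\mu) \\ (\mu,\lambda) & (\mu,\mu)\end{smallmatrix}\big)$) giving the kernel as $\tilde{H}(\Lambda,\mathbb{Z})$. The paper presents exactly this argument in the paragraph preceding the corollary rather than in a separate proof environment.
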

We complement Corollary \ref{grpdim2} with the observation that using isomorphisms like the last one in Equation \eqref{SLISL2Z}, we can view the cocycle from Equation \eqref{cocycle} as taking values in the group $(\Lambda^{*}\times\Lambda^{*})/(\Lambda\times\Lambda)=\Delta_{\Lambda}\times\Delta_{\Lambda}$ (in fact, this group is isomorphic, via a $\mathbb{Z}$-basis for $I$, to $\Delta_{\Lambda}^{\dim U}$ for $U$ of any dimension), but its behavior does not simplify more than in the general case.

\section{Canonical Boundary Components of Toroidal Compactifications \label{TorComp}}

As an application of our analysis, which was also its original motivation, we determine the exact structure of the canonical boundary components of toroidal compactifications of orthogonal Shimura varieties. Let $L$ be an even lattice of signature $(n,2)$ in the quadratic space $V=L_{\mathbb{Q}}$. Then the symmetric space \[G(V_{\mathbb{R}})=\big\{v_{-} \subseteq V\big|\;v_{-}<<0,\dim v_{-}=2\}\] of $\mathrm{O}(V_{\mathbb{R}})$ carries a structure of an $n$-dimensional complex manifold, which is natural up to complex conjugation, as described in Section 13 of \cite{[Bo]}, Section 3.2 of \cite{[Br]}, Section 1.2 of \cite{[Z1]}, Section 3.1 of \cite{[F]} (all in the opposite signature), or Section 3 of \cite{[BZ]}. For the theory of toroidal compactifications we refer to \cite{[AMRT]} in general, to \cite{[Nam]} for the symplectic case, and to Sections 3 and 5 of \cite{[F]} or Sections 2 and 3 of \cite{[BZ]} for our orthogonal case. The fiber of the toroidal compactification over a 0-dimensional cusp, which is related to the group $\Gamma_{L,I}$ for $I \subseteq L$ of rank 1 (which has the simpler structure from Corollary \ref{dim1}), is not canonical in general, and depends on some choice of fan. On the other hand, over a 1-dimensional cusp lies a canonical toroidal boundary component, which is described grossly in Section 5 of \cite{[F]}, as well as precisely in Section 3 of \cite{[BZ]} under some simplifying assumptions, as an open variety of Kuga--Sato type. The goal of this section is to give the exact description of this boundary component without the simplifying assumption on $L$, $U$, and $I$ that appear in Hypothesis 3.14 of \cite{[BZ]} (note that this is not yet the full divisor on the toroidal compactification, and the form of the non-generic part of this divisor does depend on the choices of fans over 0-dimensional cusps).

For this let $I$ be a rank 2 isotropic lattice in $L$, set $U=I_{\mathbb{Q}}$, choose a basis $(z,w)$ for $I$ over $\mathbb{Z}$, and recall the coordinates from \cite{[K]} (or \cite{[F]}, or \cite{[BZ]}) that represent the complex manifold $G(V_{\mathbb{R}})$. Explicitly we have \[G(V_{\mathbb{R}})\!\cong\!\big\{Z_{V} \in V_{\mathbb{C}}\big|\;Z_{V}^{2}=0,\ (Z_{V},\overline{Z}_{V})<0,\ (Z_{V},z)=1,\ (\Re Z_{V},\!\Im Z_{V})\mathrm{\ oriented}\big\}\] for some fixed orientation on each $v_{-} \in G(V_{\mathbb{R}})$ that is determined by $(z,w)$ hence depends continuously on $v_{-}$. If $\tilde{I}$ is a complement for $I^{\perp}_{L^{*}}$ in $L^{*}$ that satisfies the condition of Lemma \ref{compoverZ} and $\tilde{U}=\tilde{I}_{\mathbb{Q}}$, then they are spanned over $\mathbb{Z}$ (resp. $\mathbb{Q}$) by the basis $(\zeta,\upsilon)$ that is dual to $(z,w)$, and by setting $\tilde{W}=(U\oplus\tilde{U}^{\perp})$ as in Equation \eqref{Wtilde} the set of $Z_{V} \in V_{\mathbb{C}}$ that represent $G(V_{\mathbb{R}})$ becomes
\begin{equation}
\Big\{Z_{V}=\zeta+\tau\upsilon+\tilde{Z}_{0}-\sigma w+\Big(\tau\sigma-\tfrac{\tilde{Z}_{0}^{2}+(\zeta+\tau\omega)^{2}}{2}\Big)z\Big|\;\Im\tau>0,\ \Im\sigma>\tfrac{(\Im\tau)^{2}\upsilon^{2}+(\Im\tilde{Z}_{0})^{2}}{2\Im\tau}\Big\}, \label{ZVcoor}
\end{equation}
where $\tau$ and $\sigma$ are complex numbers and $\tilde{Z}_{0}\in\tilde{W}_{\mathbb{C}}$. In particular, the coordinate $\tau$ from Equation \eqref{ZVcoor} lies in the upper half-plane $\mathcal{H}:=\{\tau\in\mathbb{C}|\;\Im\tau>0\}$, and $\sigma$ lies in a translated copy of $\mathcal{H}$ (note that this translation is bounded from below by $\Im\tau\frac{\upsilon^{2}}{2}$ over $\tau\in\mathcal{H}$, since $\tilde{W}_{\mathbb{R}} \cong W_{\mathbb{R}}$ is positive definite in our convention). The projection modulo $U_{\mathbb{C}}$ omits the coordinate $\sigma$, and yields images in
\begin{equation}
\widetilde{\mathcal{D}}(\Upsilon):=\bigcup_{\tau\in\mathcal{H}}W_{\mathbb{C}}^{1,\tau},\quad\mathrm{with}\quad W_{\mathbb{C}}^{1,\tau}:=\big\{\xi \in V_{\mathbb{C}}\big|\;(\xi,z)=1,\ (\xi,w)=\tau\big\}\big/U_{\mathbb{C}}, \label{tildeDF}
\end{equation}
independently of the choice of $\tilde{I}$ and $\tilde{U}$. Here and throughout $\Upsilon$ is the Baily--Borel cusp corresponding to $U$, which is isomorphic to $\mathcal{H}$, as well as the base space $\mathcal{D}(\Upsilon)$ from \cite{[AMRT]} and others. This produces the following description of $G(V_{\mathbb{R}})$ (as in, e.g., Proposition 3.12 of \cite{[BZ]}).
\begin{prop}
The symmetric space $G(V_{\mathbb{R}})$ is an affine $\mathcal{H}$-bundle over the space $\widetilde{\mathcal{D}}(\Upsilon)$ from Equation \eqref{tildeDF}, which itself carries a structure of a holomorphic affine vector bundle over $\mathcal{H}$. \label{GVRcoor}
\end{prop}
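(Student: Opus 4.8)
The plan is to read off both fibrations directly from the explicit coordinates of Equation \eqref{ZVcoor} and then to check that the resulting structures are intrinsic, i.e.\ independent of the auxiliary complement $\tilde{I}$ up to affine automorphisms of the fibers. First I would fix $\tilde{I}$ as in Lemma \ref{compoverZ}, with dual basis $(\zeta,\upsilon)$ of $\tilde{U}$ and the identification $\tilde{W}\cong W$ from Equation \eqref{Wtilde}, so that by Equation \eqref{ZVcoor} a point $Z_{V}\in G(V_{\mathbb{R}})$ is the same datum as a triple $(\tau,\sigma,\tilde{Z}_{0})\in\mathcal{H}\times\mathbb{C}\times\tilde{W}_{\mathbb{C}}$ satisfying the single open inequality $\Im\sigma>\tfrac{(\Im\tau)^{2}\upsilon^{2}+(\Im\tilde{Z}_{0})^{2}}{2\Im\tau}$ (the orientation condition on $G(V_{\mathbb{R}})$ being precisely what selects $\Im\tau>0$). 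Here $\tau=(Z_{V},w)$ and the reduction $\tilde{Z}_{0}$ modulo $U_{\mathbb{C}}$ are intrinsic, whereas only $\sigma$ uses $\tilde{I}$; I would record at this point that changing $\tilde{I}$ alters $\sigma$ by a holomorphic function of $(\tau,\tilde{Z}_{0})$ that is \emph{affine} along the fibers of the reduction modulo $U_{\mathbb{C}}$.

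\medskip

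Next I would analyze the projection $\widetilde{\mathcal{D}}(\Upsilon)\to\mathcal{H}$, $\xi\mapsto(\xi,w)=\tau$ (well defined since $w\perp U$), whose fiber over $\tau$ is $W_{\mathbb{C}}^{1,\tau}$. The two affine-linear conditions $(\xi,z)=1$ and $(\xi,w)=\tau$ cut out inside $V_{\mathbb{C}}$ an affine subspace of codimension $2$ directed by $\{z,w\}^{\perp}_{\mathbb{C}}=U^{\perp}_{\mathbb{C}}$, so after dividing by $U_{\mathbb{C}}$ the fiber $W_{\mathbb{C}}^{1,\tau}$ becomes a torsor under $W_{\mathbb{C}}=U^{\perp}_{\mathbb{C}}/U_{\mathbb{C}}$. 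Using $(\zeta,z)=1$, $(\upsilon,z)=0$, $(\zeta,w)=0$, $(\upsilon,w)=1$ I would exhibit the holomorphic section $\tau\mapsto(\zeta+\tau\upsilon)+U_{\mathbb{C}}$, so that $(\tau,\tilde{Z}_{0})\mapsto(\zeta+\tau\upsilon+\tilde{Z}_{0})+U_{\mathbb{C}}$ is a biholomorphism $\mathcal{H}\times\tilde{W}_{\mathbb{C}}\xrightarrow{\sim}\widetilde{\mathcal{D}}(\Upsilon)$ over $\mathcal{H}$; this presents $\widetilde{\mathcal{D}}(\Upsilon)$ as a holomorphic affine vector bundle over $\mathcal{H}$ with fibers modeled on $\tilde{W}_{\mathbb{C}}\cong W_{\mathbb{C}}$, the affine structure being canonical while only the vector-space trivialization depends on $\tilde{I}$ and on Equation \eqref{Wtilde}.

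\medskip

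Finally, the map $G(V_{\mathbb{R}})\to\widetilde{\mathcal{D}}(\Upsilon)$ is the reduction modulo $U_{\mathbb{C}}$, which in the above coordinates is $(\tau,\sigma,\tilde{Z}_{0})\mapsto(\tau,\tilde{Z}_{0})$. Forgetting the inequality, $\mathcal{H}\times\mathbb{C}\times\tilde{W}_{\mathbb{C}}$ is the total space of a holomorphic affine line bundle over $\widetilde{\mathcal{D}}(\Upsilon)$ --- affine, not a vector bundle, precisely because $\sigma$ transforms affinely under a change of $\tilde{I}$ --- and $G(V_{\mathbb{R}})$ is the open sub-bundle where $\Im\sigma$ exceeds the real-analytic function $h(\tau,\tilde{Z}_{0})=\tfrac{(\Im\tau)^{2}\upsilon^{2}+(\Im\tilde{Z}_{0})^{2}}{2\Im\tau}$ of the base point. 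Each fiber is therefore a vertically translated upper half-plane, biholomorphic to $\mathcal{H}$ via the translation $\sigma\mapsto\sigma-ih(\tau,\tilde{Z}_{0})$, which is exactly the assertion that $G(V_{\mathbb{R}})$ is an affine $\mathcal{H}$-bundle over $\widetilde{\mathcal{D}}(\Upsilon)$. The one step that must actually be carried out rather than asserted --- and hence the main obstacle --- is the intrinsicness claim: that replacing $\tilde{I}$ by another unimodular complement changes $\sigma$, and the vector-space trivialization of $\widetilde{\mathcal{D}}(\Upsilon)$, only by holomorphic functions that are affine along the fibers, so that the affine bundle structures (as opposed to the trivializations) are well defined; this is a direct but slightly tedious expansion of the dependence of $(\zeta,\upsilon)$ and of $\sigma$ on $\tilde{I}$ via Equation \eqref{Wtilde}.
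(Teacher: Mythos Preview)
Your proposal is correct and follows essentially the same approach as the paper: both read off the two fibrations directly from the explicit coordinate description in Equation \eqref{ZVcoor} and the definition of $\widetilde{\mathcal{D}}(\Upsilon)$ in Equation \eqref{tildeDF}. The paper in fact gives no formal proof at all beyond the single sentence ``Indeed, the affine vector bundle structure in Proposition \ref{GVRcoor} is obtained via the natural projection sending $\xi$ in some $W_{\mathbb{C}}^{1,\tau}$ to the corresponding $\tau\in\mathcal{H}$'', referring to \cite{[BZ]} for details; your argument is a fully fleshed-out version of exactly this. Your additional discussion of intrinsicness under change of $\tilde{I}$ goes beyond what the paper verifies here, and while it is a reasonable thing to check, the paper does not require it for the statement as formulated (the maps themselves being canonical, and the term ``affine $\mathcal{H}$-bundle'' being used somewhat informally).
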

Indeed, the affine vector bundle structure in Proposition \ref{GVRcoor} is obtained via the natural projection sending $\xi$ in some $W_{\mathbb{C}}^{1,\tau}$ to the corresponding $\tau\in\mathcal{H}$.

Consider now the action of the group $\Gamma_{L,I}\cap\mathcal{W}_{U}=\tilde{H}(\Lambda,\mathbb{Z})$ from Equation \eqref{rk2Heis} and its subgroup $\mathrm{Hom}^{as}(I^{*},I)\cong\bigwedge^{2}I\cong\mathbb{Z}$, which are $\mathcal{W}_{\mathbb{Z}}(\Upsilon)$ and $\mathcal{U}_{\mathbb{Z}}(\Upsilon)$ respectively in the notation of \cite{[BZ]} (and others). Examining the action of the latter yields the following consequence of Proposition \ref{GVRcoor}, appearing in Corollary 3.17 of \cite{[BZ]}.
\begin{cor}
The quotient $\mathcal{U}_{\mathbb{Z}}(\Upsilon) \backslash G(V_{\mathbb{R}})$ is a punctured disc bundle over the space $\widetilde{\mathcal{D}}(\Upsilon)$. \label{puncdisc}
\end{cor}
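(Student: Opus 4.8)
The plan is to combine the fibration of Proposition \ref{GVRcoor} with an explicit description of how the rank-one group $\mathcal{U}_{\mathbb{Z}}(\Upsilon)=\mathrm{Hom}^{as}(I^{*},I)\subseteq\mathcal{W}_{U}$ acts in the coordinates of Equation \eqref{ZVcoor}, and then to divide the half-plane fiber by this action via the exponential map.

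First I would work out the action of a generator $\eta_{0}$ of $\mathrm{Hom}^{as}(I^{*},I)\cong\bigwedge^{2}I\cong\mathbb{Z}$ (the one determined by the chosen basis $(z,w)$ of $I$) on a vector $Z_{V}$ written as in Equation \eqref{ZVcoor}. By Proposition \ref{pargrpgen} (applied with $M=\mathrm{Id}_{U}$, $\gamma=\mathrm{Id}_{W}$ and $\psi=0$), the corresponding element of $\mathcal{W}_{U}$ sends $v\in V$ to $v-\eta_{0}v^{*}$, where $v^{*}\in U^{*}\cong\tilde{U}$ is the class of $v$ modulo $U^{\perp}$; for the $Z_{V}$ of Equation \eqref{ZVcoor} this class is $\zeta+\tau\upsilon$, and a one-line computation with the bases $(z,w)$ and $(\zeta,\upsilon)$ then gives $\eta_{0}v^{*}=w-\tau z\in U_{\mathbb{C}}$. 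Hence the generator acts by $\sigma\mapsto\sigma\mp1$ while leaving $\tau$ and $\tilde{Z}_{0}$ — indeed the whole class of $Z_{V}$ modulo $U_{\mathbb{C}}$ — unchanged, the $z$-coefficient being corrected exactly as forced by $Z_{V}^{2}=0$ and $(Z_{V},z)=1$. Consequently the action of $\mathcal{U}_{\mathbb{Z}}(\Upsilon)\cong\mathbb{Z}$ commutes with the projection $G(V_{\mathbb{R}})\to\widetilde{\mathcal{D}}(\Upsilon)$ of Equation \eqref{tildeDF} and operates on each fiber, which by Proposition \ref{GVRcoor} is the half-plane $\{\Im\sigma>\tfrac{(\Im\tau)^{2}\upsilon^{2}+(\Im\tilde{Z}_{0})^{2}}{2\Im\tau}\}$, through the lattice of integer translations in the coordinate $\sigma$.

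Next I would apply the biholomorphism $\sigma\mapsto q:=e^{2\pi i\sigma}$ fiberwise. It identifies the quotient of a fiber $\{\Im\sigma>c\}$ by $\sigma\mapsto\sigma+\mathbb{Z}$ with the punctured disc $\{0<|q|<e^{-2\pi c}\}$, and since $q$ descends to a global holomorphic function on $\mathcal{U}_{\mathbb{Z}}(\Upsilon)\backslash G(V_{\mathbb{R}})$ compatible with the projection to $\widetilde{\mathcal{D}}(\Upsilon)$, the quotient becomes a family of punctured discs over $\widetilde{\mathcal{D}}(\Upsilon)$; adjoining the locus $q=0$ yields a holomorphic disc bundle in which $\mathcal{U}_{\mathbb{Z}}(\Upsilon)\backslash G(V_{\mathbb{R}})$ sits as the complement of the zero section. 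This is the asserted punctured disc bundle structure, and it is exactly what one completes at $q=0$ in Section \ref{TorComp}.

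The coordinate computation of the action is routine once Proposition \ref{pargrpgen} is available; the only point needing a little care is the last step, i.e.\ checking that the $q$-quotient assembles into an honest holomorphic (disc) bundle over $\widetilde{\mathcal{D}}(\Upsilon)$ rather than merely a submersion with disc fibers. Here the radius $e^{-2\pi c}$ of the fiber depends only real-analytically on the base point through $c=\tfrac{(\Im\tau)^{2}\upsilon^{2}+(\Im\tilde{Z}_{0})^{2}}{2\Im\tau}$, so the content of the statement is that the pair consisting of the ambient disc bundle and its zero section is well defined up to the evident fiberwise rescaling; I would make ``punctured disc bundle'' mean precisely this, which suffices for all subsequent use.
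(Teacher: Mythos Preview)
Your proof is correct and follows essentially the same approach as the paper: verify that $\mathcal{U}_{\mathbb{Z}}(\Upsilon)$ acts by integer translations on the coordinate $\sigma$, then apply the exponential map to turn each translated half-plane fiber into a punctured disc. You have simply supplied the explicit computation and the bundle-regularity remark that the paper leaves as ``one easily verifies.''
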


\begin{proof}
One easily verifies that elements of $\mathcal{U}_{\mathbb{Z}}(\Upsilon)$ operate by addition on the coordinate $\sigma$, and dividing a translated upper half-plane by $\mathbb{Z}$ gives a punctured disc (under the exponential map). This proves the corollary.
\end{proof}

The coordinate on the fibers of the map from Corollary \ref{puncdisc} is the one denoted by $q_{2}$ in \cite{[K]}. The toroidal boundary component in which we are interested is obtained by filling in the zero section of this disc bundle, and it is therefore isomorphic to the image of $\widetilde{\mathcal{D}}(\Upsilon)$ under the action of $\overline{\Gamma}_{L,I}$ (see Proposition 3.19 of \cite{[BZ]}). The first step of determining this image is dividing by the kernel $\Lambda\times\Lambda$ of the associated short exact sequence from Corollary \ref{grpdim2} (this is the group $\mathcal{W}_{\mathbb{Z}}(\Upsilon)/\mathcal{U}_{\mathbb{Z}}(\Upsilon)$, denoted by $\mathcal{V}_{\mathbb{Z}}(\Upsilon)$ in \cite{[AMRT]} and others), and for describing the result, which is also contained in Corollary 3.17 of \cite{[BZ]}, we recall that \[\mathcal{E}\to\mathcal{H}\mathrm{\ is\ the\ universal\ elliptic\ curve,\ with\ fiber\ }E_{\tau}:=\mathbb{C}/(\mathbb{Z}\oplus\mathbb{Z}\tau)\mathrm{\ over\ }\tau\in\mathcal{H}.\]
\begin{prop}
The quotient $\mathcal{V}_{\mathbb{Z}}(\Upsilon)\backslash\widetilde{\mathcal{D}}(\Upsilon)$ is a principal homogenous space of the universal family $\mathcal{E}\otimes\Lambda$ over $\mathcal{H}$. It carries the punctured disc bundle $\mathcal{W}_{\mathbb{Z}}(\Upsilon) \backslash G(V_{\mathbb{R}})$. \label{ELambdaH}
\end{prop}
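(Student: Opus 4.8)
The plan is to make the action of $\mathcal{V}_{\mathbb{Z}}(\Upsilon)=\mathcal{W}_{\mathbb{Z}}(\Upsilon)/\mathcal{U}_{\mathbb{Z}}(\Upsilon)=\mathrm{Hom}(\Lambda^{*},I)$ on the affine bundle $\widetilde{\mathcal{D}}(\Upsilon)$ of Equation \eqref{tildeDF} completely explicit, to recognize the fiberwise quotients as $E_{\tau}\otimes\Lambda$, and then to obtain the statement about the disc bundle by a formal descent along the equivariant projection from Corollary \ref{puncdisc}.

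First I would record, following Proposition \ref{GVRcoor} and Equation \eqref{tildeDF}, that the fiber of $\widetilde{\mathcal{D}}(\Upsilon)$ over $\tau\in\mathcal{H}$ is $W_{\mathbb{C}}^{1,\tau}$, an affine space over $(U^{\perp}/U)_{\mathbb{C}}=W_{\mathbb{C}}$, and that since $\Lambda$ has full rank in $W$ we have $W_{\mathbb{C}}=\Lambda_{\mathbb{C}}=\Lambda\otimes_{\mathbb{Z}}\mathbb{C}$. The group $\mathcal{V}_{\mathbb{Z}}(\Upsilon)$ acts on $\widetilde{\mathcal{D}}(\Upsilon)$ because $\mathcal{U}_{\mathbb{Z}}(\Upsilon)$ is the center of the Heisenberg group from Equation \eqref{rk2Heis} and operates, by Corollary \ref{puncdisc}, only on the coordinate $\sigma$, hence within the fibers of the reduction $G(V_{\mathbb{R}})\to\widetilde{\mathcal{D}}(\Upsilon)$ modulo $U_{\mathbb{C}}$. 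An element of $\mathcal{W}_{U}$ with parameter $\psi$ fixes $U$ pointwise, so it preserves the conditions $(\xi,z)=1$ and $(\xi,w)=\tau$ and hence each fiber $W_{\mathbb{C}}^{1,\tau}$; reading off the displays for $Au$, $Aw$ and $A\tilde{u}$ that follow Equation \eqref{PUgenform} with $M=\mathrm{Id}_{U}$, $\gamma=\mathrm{Id}_{W}$ and $\eta=0$, one finds that modulo $U_{\mathbb{C}}$ this element acts on $W_{\mathbb{C}}^{1,\tau}$ by translation through $\psi^{*}(u^{*}_{\tau})\in W_{\mathbb{C}}$, where $u^{*}_{\tau}\in U^{*}$ is the functional with $u^{*}_{\tau}(z)=1$ and $u^{*}_{\tau}(w)=\tau$ (the sign being immaterial for what follows).

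The core computation is to identify this translation vector. Writing $\psi\in\mathrm{Hom}(\Lambda^{*},I)$ in the coordinates of Equation \eqref{SLISL2Z} as the pair $(\lambda,\mu)\in\Lambda\times\Lambda$, so that $\psi(v)=(v,\lambda)z+(v,\mu)w$, a direct computation gives $\psi^{*}(u^{*})=(u^{*},z)\lambda+(u^{*},w)\mu$, whence $\psi^{*}(u^{*}_{\tau})=\lambda+\tau\mu$. Therefore $\mathcal{V}_{\mathbb{Z}}(\Upsilon)\cong\Lambda\times\Lambda$ acts on the fiber $W_{\mathbb{C}}^{1,\tau}\cong\Lambda_{\mathbb{C}}$ by translation through $\Lambda+\tau\Lambda=\Lambda\otimes_{\mathbb{Z}}(\mathbb{Z}\oplus\mathbb{Z}\tau)$, which is a genuine lattice in $\Lambda_{\mathbb{C}}$ because $1$ and $\tau$ are $\mathbb{R}$-linearly independent; the action is free and, locally uniformly in $\tau$, properly discontinuous, and the quotient of the fiber is a torsor under $\Lambda_{\mathbb{C}}/(\Lambda\oplus\tau\Lambda)=\Lambda\otimes_{\mathbb{Z}}\bigl(\mathbb{C}/(\mathbb{Z}\oplus\mathbb{Z}\tau)\bigr)=E_{\tau}\otimes\Lambda$. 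Since all of this varies holomorphically with $\tau$, the quotient $\mathcal{V}_{\mathbb{Z}}(\Upsilon)\backslash\widetilde{\mathcal{D}}(\Upsilon)$ is, over $\mathcal{H}$, a principal homogeneous space under the universal family $\mathcal{E}\otimes\Lambda$ (in general a nontrivial one, as $\widetilde{\mathcal{D}}(\Upsilon)$ need not admit a holomorphic section invariant modulo this lattice).

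For the last sentence, recall from Corollary \ref{puncdisc} that $\mathcal{U}_{\mathbb{Z}}(\Upsilon)\backslash G(V_{\mathbb{R}})$ is a punctured disc bundle over $\widetilde{\mathcal{D}}(\Upsilon)$ whose projection is $\mathcal{W}_{\mathbb{Z}}(\Upsilon)$-equivariant, the action on the base factoring through $\mathcal{V}_{\mathbb{Z}}(\Upsilon)$. As that action on $\widetilde{\mathcal{D}}(\Upsilon)$ is free and properly discontinuous, so is the induced action on the total space of this locally trivial bundle, and passing to the quotient commutes with the bundle projection; hence $\mathcal{W}_{\mathbb{Z}}(\Upsilon)\backslash G(V_{\mathbb{R}})=\mathcal{V}_{\mathbb{Z}}(\Upsilon)\backslash\bigl(\mathcal{U}_{\mathbb{Z}}(\Upsilon)\backslash G(V_{\mathbb{R}})\bigr)$ is a punctured disc bundle over $\mathcal{V}_{\mathbb{Z}}(\Upsilon)\backslash\widetilde{\mathcal{D}}(\Upsilon)$, as claimed. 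I expect the one genuinely delicate point to be the identification $\psi^{*}(u^{*}_{\tau})=\lambda+\tau\mu$, which hinges on carefully matching the coordinates of Equation \eqref{SLISL2Z} with the action formulas around Equation \eqref{PUgenform}; everything else is the formal descent just sketched.
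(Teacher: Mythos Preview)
Your argument is correct and follows the same approach as the paper: both identify the action of $\mathcal{V}_{\mathbb{Z}}(\Upsilon)\cong\Lambda\times\Lambda$ on each fiber $W_{\mathbb{C}}^{1,\tau}$ as translation by the lattice $\Lambda\oplus\tau\Lambda$ and then invoke Corollary~\ref{puncdisc} for the disc bundle, with your write-up simply making explicit the ``easy check'' the paper leaves to the reader. The only discrepancy is one of convention---the paper records the translation as $\tilde{Z}_{0}\mapsto\tilde{Z}_{0}+\tau\lambda+\mu$ rather than by $\lambda+\tau\mu$, reflecting the opposite ordering in the identification $\mathrm{Hom}(\Lambda^{*},I)\cong\Lambda\times\Lambda$ of Equation~\eqref{SLISL2Z}---which, as you anticipated, is immaterial for the conclusion.
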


\begin{proof}
It is easy to check that the action of $\mathcal{V}_{\mathbb{Z}}(\Upsilon)\cong\Lambda\times\Lambda$ from Corollary \ref{grpdim2} becomes the additive action of the lattice $\Lambda\oplus\Lambda\tau$ on each fiber $W_{\mathbb{C}}^{1,\tau}$ of the second map from Proposition \ref{GVRcoor}. This proves the first assertion, since $W_{\mathbb{C}}^{1,\tau}$ is an affine model, or a principal homogenous space, of the complex vector space $W_{\mathbb{C}}=\Lambda_{\mathbb{C}}$. The second one follows from Corollary \ref{puncdisc}. This proves the proposition.
\end{proof}
As for the coordinate $q_{2}$ of the fibers punctured disc bundle, the action of an element of $\tilde{H}(\Lambda,\mathbb{Z})$, given by the parameters from Equation \eqref{rk2Heis}, takes the coordinate $\sigma$ from Equation \eqref{ZVcoor} to $\sigma+(\lambda,\tilde{Z}_{0})+\tau\frac{\lambda^{2}}{2}+\frac{(\lambda,\mu)}{2}+t$. This shows that the inequality from that equation is preserved (since $\tilde{Z}_{0}$ is sent to $\tilde{Z}_{0}+\tau\lambda+\mu$), but illustrates that the behavior of the coordinate $\sigma$ (and with it $q_{2}$) is more complicated. This is another indication of the fact that the short exact sequence involving $\Gamma_{L,I}$ in Proposition \eqref{sdprodZ} and Corollary \ref{grpdim2} does not split. Hence from now on we only consider the boundary component itself, which is defined by the equation $q_{2}=0$, an equation that is invariant under $\Gamma_{L,I}$.

\smallskip

The preliminary version of the form of the toroidal boundary component that lies over $\Upsilon$, which is the precise one if $\iota=0$ and $\Gamma_{\Lambda}$ is trivial (the latter always happens when $\Gamma_{L}$ is assumed to be neat), and gives a finite cover of the precise one in general, is as follows. We denote by $W_{L^{*}}^{\Lambda}$ the open Kuga--Sato variety that lies over $\Gamma_{L^{*}}\backslash\mathcal{H}$, in which the fiber over an element $\Gamma_{L^{*}}\tau$ is isomorphic to $E_{\tau}\otimes\Lambda$, and if $\tilde{I}$ and $\tilde{U}$ are chosen and $\iota$ is the map from Theorem \ref{latdecom}, then $W_{\iota}^{\Lambda}$ is the similarly defined open Kuga--Sato variety over $\Gamma_{\iota}\backslash\mathcal{H}$.
\begin{prop}
Consider the pre-image of $\Gamma_{\iota}\times\{\mathrm{Id}_{\Lambda}\}\subseteq\mathrm{SL}(I)\times\Gamma_{\Lambda}$ in $\overline{\Gamma}_{L,I}$. The quotient of $\widetilde{\mathcal{D}}(\Upsilon)$ by this group is a principal homogenous space over $W^{\Lambda}_{\iota}$. \label{fincover}
\end{prop}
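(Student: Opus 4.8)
The plan is to compute the quotient in two stages, exploiting the semidirect-product structure supplied by Corollary~\ref{grpdim2}. First I would record that the chosen complement $\tilde U=\tilde I_{\mathbb{Q}}$ splits the second short exact sequence of Corollary~\ref{grpdim2} over $\Gamma_\iota\times\Gamma_\Lambda$, so that the pre-image of $\Gamma_\iota\times\{\mathrm{Id}_\Lambda\}$ in $\overline\Gamma_{L,I}$ is the semidirect product $\mathcal{V}_{\mathbb{Z}}(\Upsilon)\rtimes\Gamma_\iota$, where $\mathcal{V}_{\mathbb{Z}}(\Upsilon)\cong\Lambda\times\Lambda$ is the kernel and $\Gamma_\iota$ is embedded via the splitting, acting with trivial $\mathrm{O}(W)$-component. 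Consequently the quotient of $\widetilde{\mathcal{D}}(\Upsilon)$ by this group is obtained by dividing first by $\mathcal{V}_{\mathbb{Z}}(\Upsilon)$ and then by the induced $\Gamma_\iota$-action on the result. The first stage is immediate from Proposition~\ref{ELambdaH}: the space $P:=\mathcal{V}_{\mathbb{Z}}(\Upsilon)\backslash\widetilde{\mathcal{D}}(\Upsilon)$ is a principal homogeneous space of the universal family $\mathcal{E}\otimes\Lambda$ over $\mathcal{H}$, with fiber over $\tau$ the affine model $W_{\mathbb{C}}^{1,\tau}/(\Lambda\oplus\Lambda\tau)$ of $E_\tau\otimes\Lambda$.

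The substance is in the second stage, where I would identify the residual $\Gamma_\iota$-action on $P$ fiberwise over $\mathcal{H}$. On the base, an element with $\mathrm{GL}(U)$-part $M$ acts on $V/U^{\perp}\cong U^{*}$ as $M^{-*}$ by Corollary~\ref{actquot}; reading this against the basis of $I^{*}$ dual to $(z,w)$ and re-imposing the normalization $(\xi,z)=1$ from Equation~\eqref{tildeDF} shows that $M$ acts on the coordinate $\tau$ by the fractional-linear transformation attached to the image of $M$ in $\mathrm{SL}_{2}(\mathbb{Z})$, that is, by precisely the modular action whose quotient is $\Gamma_\iota\backslash\mathcal{H}$. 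On the fibers, since the $\mathrm{O}(W)$-component of these elements is $\mathrm{Id}_\Lambda$, the induced map $W_{\mathbb{C}}^{1,\tau}\to W_{\mathbb{C}}^{1,M\tau}$ differs from the tautological identification of the parallel vector spaces only by the rescaling forced by the normalization $(\xi,z)=1$, namely by the automorphy factor on $W_{\mathbb{C}}=\Lambda_{\mathbb{C}}$; one then checks this is exactly the automorphy factor of the universal elliptic curve, so it carries $\Lambda\oplus\Lambda\tau$ onto $\Lambda\oplus\Lambda(M\tau)$ and descends to the standard isomorphism $E_\tau\otimes\Lambda\to E_{M\tau}\otimes\Lambda$. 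Therefore $\Gamma_\iota\backslash P$ is a family over $\Gamma_\iota\backslash\mathcal{H}$ whose fiber over $\Gamma_\iota\tau$ is an affine model of $E_\tau\otimes\Lambda$ with structure group $\mathcal{E}\otimes\Lambda$, which is exactly the assertion that it is a principal homogeneous space over the open Kuga--Sato variety $W^\Lambda_\iota$. The reason one does not in general obtain $W^\Lambda_\iota$ itself is that the fibers $W_{\mathbb{C}}^{1,\tau}$ carry no $\Gamma_\iota$-equivariant origin, so the torsor can fail to be trivial over the non-simply-connected base $\Gamma_\iota\backslash\mathcal{H}$.

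The main obstacle I anticipate is the bookkeeping in the second stage: confirming that the rescaling produced by the normalization in Equation~\eqref{tildeDF} really is the classical automorphy factor $c\tau+d$, so that the $\Gamma_\iota$-action on $P$ matches on the nose the one defining $W^\Lambda_\iota$ (in particular is compatible with the lattice $\Lambda\oplus\Lambda\tau$ and with the affine, rather than linear, structure of the fibers), together with the routine verification that this $\Gamma_\iota$-action on $P$ is properly discontinuous so that $\Gamma_\iota\backslash P$ is a complex space of the asserted type. Both come out of combining the explicit coordinates of Equation~\eqref{ZVcoor} with Corollary~\ref{actquot}, but demand care with sign and transpose conventions; everything else in the argument is bookkeeping on the two-step quotient.
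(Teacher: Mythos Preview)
Your proposal is correct and follows essentially the same approach as the paper: the paper's proof is a single sentence invoking the semi-direct product structure of the pre-image established in Theorem~\ref{sdprodZ} and Corollary~\ref{grpdim2}, and you have simply unpacked what that invocation means operationally---first dividing by the normal subgroup $\mathcal{V}_{\mathbb{Z}}(\Upsilon)\cong\Lambda\times\Lambda$ via Proposition~\ref{ELambdaH}, then by the lifted copy of $\Gamma_{\iota}$. The additional fiberwise verification you outline (that the $\Gamma_{\iota}$-action on the base is fractional-linear and on the fibers is the standard automorphy factor) is exactly the routine content the paper leaves implicit.
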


\begin{proof}
The statement follows from the structure of this pre-image as a semi-direct product over $\mathbb{Z}$, proved in Theorem \ref{sdprodZ} and Corollary \ref{grpdim2}. This proves the proposition.
\end{proof}

The structure of a principal homogenous space means that when we choose the complements $\tilde{I}$ and $\tilde{U}$, each affine space $W_{\mathbb{C}}^{1,\tau}$ becomes naturally isomorphic to the vector space $\tilde{W}_{\mathbb{C}}$, by subtracting $\zeta+\tau\omega$ from Equation \eqref{ZVcoor}. Hence after dividing the fiber over $\tau$ by $\Lambda\oplus\Lambda\tau$ we obtain in Proposition \ref{fincover} a well-defined zero section for the projection from $W^{\Lambda}_{\iota}$ onto $\Gamma_{\iota}\backslash\mathcal{H}$. As the general statement in Theorem \ref{boundcomp} shows, such a zero section is not well-defined in general. Recall that each element $M\in\mathrm{SL}(I_{L^{*}},I)$ defines the class $b_{M}$ from Equation \eqref{cocycle}, and that when $\dim U=2$ (hence $M\in\Gamma_{L^{*}}$ as in Corollary \ref{grpdim2}), the class $b_{M}$ lies in $\Delta_{\Lambda}\times\Delta_{\Lambda}$. The cocycle condition from Proposition \ref{actcoset} allows us to define $W_{L^{*}}^{\Lambda,b}$ to be the quotient of the universal family $\mathcal{E}\otimes\Lambda$ from Proposition \ref{ELambdaH} under the action of $\Gamma_{L^{*}}$, in which an element $M$ in the latter group also acts on the fibers by translation by the image of $b_{M}$ in $\Delta_{\Lambda}\oplus\Delta_{\Lambda}\tau \subseteq E_{\tau}\otimes\Lambda$.

In addition, the group $\Gamma_{\Lambda}$ is finite (since $\Lambda$ is positive definite), and it acts on the space $W_{\mathbb{C}}$. The quotient $\Gamma_{\Lambda} \backslash W_{\mathbb{C}}$ is the affine algebraic variety $\mathrm{Spec}\big[(\mathrm{Sym}^{*}W_{\mathbb{C}})^{\Gamma_{\Lambda}}\big]$ (under the identification $W \cong W^{*}$ from Equation \eqref{isobil}), and as $\Gamma_{\Lambda}$ operates on the fibers of the map $\mathcal{E}\otimes\Lambda\to\mathcal{H}$, the structure sheaf of every fiber of the quotient is obtained by taking the $\Gamma_{\Lambda}$-invariant functions in the structure sheaf of $E_{\tau}\otimes\Lambda$. After dividing by $\Gamma_{\iota}$ we obtain a well-defined quotient $\Gamma_{\Lambda} \backslash W_{\iota}^{\Lambda}$ (with a similar structure sheaf), and since $\Gamma_{\Lambda}$ operates trivially on $\Delta_{\Lambda}$ as well as on the group $\mathrm{Hom}(\Delta_{\Lambda},U/I)$ from Equation \eqref{cocycle} and Proposition \ref{actcoset}, the quotient $\Gamma_{\Lambda} \backslash W_{L^{*}}^{\Lambda,b}$ is also well-defined. Considering the actions of all the groups involved therefore yields the following descripton of our toroidal boundary component.
\begin{thm}
The quotient $\overline{\Gamma}_{L,I}\backslash\widetilde{\mathcal{D}}(\Upsilon)$, and with it the generic subset of the toroidal boundary divisor associated with the cusp $\Upsilon$, is isomorphic to $\Gamma_{\Lambda} \backslash W_{L^{*}}^{\Lambda,b}$. \label{boundcomp}
\end{thm}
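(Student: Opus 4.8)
The plan is to compute $\overline{\Gamma}_{L,I}\backslash\widetilde{\mathcal{D}}(\Upsilon)$ in stages, following the non-split short exact sequence from Corollary \ref{grpdim2},
\[
1\to\Lambda\times\Lambda\to\overline{\Gamma}_{L,I}\to\Gamma_{L^{*}}\times\Gamma_{\Lambda}\to1,
\]
first dividing by the normal unipotent part $\mathcal{V}_{\mathbb{Z}}(\Upsilon)\cong\Lambda\times\Lambda$, then by the two commuting factors $\Gamma_{L^{*}}$ and $\Gamma_{\Lambda}$ of the Levi quotient, and identifying each intermediate quotient with the appropriate family. That the action of $\overline{\Gamma}_{L,I}$ on $\widetilde{\mathcal{D}}(\Upsilon)$ indeed factors through $\overline{\mathcal{P}}_{U}$ is automatic, since $\overline{\Gamma}_{L,I}$ is by definition the image of $\Gamma_{L,I}$ in $\overline{\mathcal{P}}_{U}$ and $\overline{\mathcal{P}}_{U}$ acts on $V/U$ by Corollary \ref{actquot}.

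The first stage is already done: Proposition \ref{ELambdaH} shows that $\mathcal{V}_{\mathbb{Z}}(\Upsilon)\backslash\widetilde{\mathcal{D}}(\Upsilon)$ is a principal homogeneous space over the universal family $\mathcal{E}\otimes\Lambda\to\mathcal{H}$, with $\Lambda\times\Lambda$ acting on the fiber $W_{\mathbb{C}}^{1,\tau}$ by translation by the lattice $\Lambda\oplus\Lambda\tau$. Since $\Lambda\times\Lambda$ is normal in $\overline{\Gamma}_{L,I}$, the group $\Gamma_{L^{*}}\times\Gamma_{\Lambda}$ acts on this quotient, and by Corollary \ref{actquot} and its product rule the factor $\Gamma_{\Lambda}$ acts on each fiber $W_{\mathbb{C}}^{1,\tau}\cong\Lambda_{\mathbb{C}}$ by its natural linear action on $\Lambda$ (fixing $\tau$); preserving $\Lambda\oplus\Lambda\tau$, it descends to a fiberwise action on $\mathcal{E}\otimes\Lambda$, and $\Gamma_{\Lambda}\backslash(\mathcal{E}\otimes\Lambda)$ is the fiberwise quotient with the invariant structure sheaf described before the statement. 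As $\Gamma_{\Lambda}$ sits in a direct product with $\Gamma_{L^{*}}$ in the Levi quotient and acts trivially on $\Delta_{\Lambda}$ and on $\mathrm{Hom}(\Delta_{\Lambda},U/I)$ by Proposition \ref{actcoset}, this quotient commutes with the $\Gamma_{L^{*}}$-quotient below, so the two may be taken in either order.

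The core of the argument is the $\Gamma_{L^{*}}$-action. Over the subgroup $\Gamma_{\iota}$ the second sequence of Corollary \ref{grpdim2} splits (Theorem \ref{sdprodZ}), and Proposition \ref{fincover} identifies the quotient of $\widetilde{\mathcal{D}}(\Upsilon)$ by the pre-image of $\Gamma_{\iota}\times\{\mathrm{Id}_{\Lambda}\}$ with a principal homogeneous space over the open Kuga--Sato variety $W^{\Lambda}_{\iota}$. For $M\in\Gamma_{L^{*}}\setminus\Gamma_{\iota}$ there is no such splitting: any lift of $M$ to $\overline{\Gamma}_{L,I}$ has its $\mathrm{Hom}(W,U)$-coordinate $\psi$ in the non-trivial coset $b_{M}=(\mathrm{Id}_{U}-M)\circ\iota^{*}$ of $\mathrm{Hom}(\Lambda^{*},I)$ inside $\mathrm{Hom}(\Lambda,I)$ from Equation \eqref{cocycle}. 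Unwinding the action of such a lift on a fiber via Equation \eqref{PUgenform}, Corollary \ref{actquot}, and the coordinates of Equation \eqref{ZVcoor}, one finds that $M$ acts on $\mathcal{E}\otimes\Lambda$ by the natural modular action on $\mathcal{H}$ together with translation of the fiber over $\tau$ by the image of $b_{M}$ in $\Delta_{\Lambda}\oplus\Delta_{\Lambda}\tau\subseteq E_{\tau}\otimes\Lambda$; the cocycle relation $b_{MN}=b_{M}+M(b_{N})$ of Proposition \ref{actcoset} is precisely what makes this a well-defined $\Gamma_{L^{*}}$-action, and the resulting quotient is $W_{L^{*}}^{\Lambda,b}$ by definition. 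Combined with the $\Gamma_{\Lambda}$-quotient, which is compatible since $\Gamma_{\Lambda}$ fixes $\Delta_{\Lambda}$ pointwise, this yields $\Gamma_{\Lambda}\backslash W_{L^{*}}^{\Lambda,b}$.

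Finally I would assemble the pieces: the iterated quotient equals $\overline{\Gamma}_{L,I}\backslash\widetilde{\mathcal{D}}(\Upsilon)$ by the standard fact that dividing by a normal subgroup and then by the quotient group recovers the quotient by the whole group, and this space is the generic part of the toroidal boundary divisor over $\Upsilon$ as recalled before Proposition \ref{fincover} (the boundary component being the image of $\widetilde{\mathcal{D}}(\Upsilon)$ under $\overline{\Gamma}_{L,I}$ after filling in the zero section of the punctured disc bundle from Corollary \ref{puncdisc}). The main obstacle is the explicit verification, in the coordinates of Equation \eqref{ZVcoor}, that a lift of $M\in\Gamma_{L^{*}}$ acts on the fibers of $\mathcal{E}\otimes\Lambda$ exactly as the natural action twisted by the translation $b_{M}$; this is where the half-integral subtleties and the precise bookkeeping of Proposition \ref{paragrpZ} and Equation \eqref{PUgenform} enter, and one must also check carefully that the $\Gamma_{\Lambda}$-quotient is genuinely compatible with this twist (which is why $\Gamma_{\Lambda}\backslash W_{L^{*}}^{\Lambda,b}$ makes sense at all).
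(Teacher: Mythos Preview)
Your proposal is correct and follows precisely the approach the paper takes: the paper does not give a separate formal proof of this theorem but presents it as the outcome of ``considering the actions of all the groups involved,'' namely Proposition \ref{ELambdaH} for the $\mathcal{V}_{\mathbb{Z}}(\Upsilon)$-quotient, the cocycle description from Equation \eqref{cocycle} and Proposition \ref{actcoset} for the $\Gamma_{L^{*}}$-action, and the paragraph on the $\Gamma_{\Lambda}$-action immediately preceding the statement. Your write-up simply makes these implicit steps explicit, including the compatibility of the $\Gamma_{\Lambda}$-quotient with the twist $b_{M}$, which the paper records just before the theorem.
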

Note that while $W_{L^{*}}^{\Lambda,b}$ does have a zero section that is defined up to a subgroup of $\Delta_{\Lambda}\times\Delta_{\Lambda}$, the more canonical definition of $\overline{\Gamma}_{L,I}\backslash\widetilde{\mathcal{D}}(\Upsilon)$ in Theorem \ref{boundcomp} does not have a well-defined zero section at all, since it is constructed from the affine vector bundle $\widetilde{\mathcal{D}}(\Upsilon)$ from Equation \eqref{tildeDF}.

\noindent\textsc{Einstein Institute of Mathematics, the Hebrew University of Jerusalem, Edmund Safra Campus, Jerusalem 91904, Israel}

\noindent E-mail address: zemels@math.huji.ac.il

\end{document}